\documentclass[a4paper,10pt,twoside]{amsart}
\textwidth=114truemm
 \textheight=185truemm

\usepackage[latin1]{inputenc}
\usepackage[T1]{fontenc}
\usepackage{amsmath}
\usepackage{amsthm}
\usepackage{amssymb}
\usepackage[all]{xy}
\usepackage{epic}
\usepackage{stmaryrd}
\newtheorem{thm}{Theorem}[section]

\newtheorem{prop}[thm]{Proposition}

\newtheorem{lem}[thm]{Lemma}
\newtheorem{cor}[thm]{Corollary}
\newtheorem{conjecture}[thm]{Conjecture}

\numberwithin{equation}{section}

\theoremstyle{definition}
\newtheorem{definition}[thm]{Definition}
\newtheorem{remark}[thm]{Remark}

\newtheorem{ex}[thm]{Example}
\newtheorem{exer}[thm]{Exercise}

\newcommand{\qqed}{\hspace*{\fill}$\Box$}

\newcommand{\Db}{{\rm D}^{\rm b}}

\newcommand{\Aut}{{\rm Aut}}

\newcommand{\Pic}{{\rm Pic}}

\newcommand{\rk}{{\rm rk}}
\newcommand{\coh}{{\rm Coh}}

\newcommand{\End}{{\rm End}}
\newcommand{\Hom}{{\rm Hom}}

\newcommand{\ext}{{\rm ext}}

\newcommand{\Stab}{{\rm Stab}}

\renewcommand{\ker}{{\rm Ker}}

\newcommand{\Ext}{{\rm Ext}}

\newcommand{\cal}{\mathcal}
\newcommand{\ka}{{\cal A}}
\newcommand{\kb}{{\cal B}}
\newcommand{\kc}{{\cal C}}
\newcommand{\kd}{{\cal D}}

\newcommand{\kf}{{\cal F}}

\newcommand{\ko}{{\cal O}}
\newcommand{\kp}{{\cal P}}
\newcommand{\kr}{{\cal R}}
\newcommand{\kq}{{\cal Q}}
\newcommand{\kt}{{\cal T}}

\newcommand{\ZZ}{\mathbb{Z}}
\newcommand{\QQ}{\mathbb{Q}}
\newcommand{\RR}{\mathbb{R}}
\newcommand{\CC}{\mathbb{C}}

\newcommand{\HH}{\mathbb{H}}
\newcommand{\PP}{\mathbb{P}}

\renewcommand{\to}{\xymatrix@1@=15pt{\ar[r]&}}
\renewcommand{\rightarrow}{\xymatrix@1@=15pt{\ar[r]&}}
\renewcommand{\mapsto}{\xymatrix@1@=15pt{\ar@{|->}[r]&}}
\renewcommand{\twoheadrightarrow}{\xymatrix@1@=15pt{\ar@{->>}[r]&}}
\renewcommand{\hookrightarrow}{\xymatrix@1@=15pt{\ar@{^(->}[r]&}}
\newcommand{\congpf}{\xymatrix@1@=15pt{\ar[r]^-\sim&}}
\renewcommand{\cong}{\simeq}

\begin{document}
\title{Introduction to stability conditions}
\author{D. Huybrechts}
\address{Mathematisches Institut,
Universit{\"a}t Bonn, Endenicher Alle 60, 53115 Bonn, Germany}
\email{huybrech@math.uni-bonn.de}
\begin{abstract} These are notes of a course given at the `school on modu\-li spaces' at the Newton Institute in January 2011.
The abstract theory of stability conditions (due to Bridgeland and Douglas) on abelian and triangulated categories is developed via tilting and t-structures. Special emphasis is put on the bounded derived category of coherent sheaves
on smooth projective varieties (in particular for curves and K3 surfaces). The lectures were targeted at an audience with
little prior knowledge of triangulated categories and stability conditions but with a keen interest in vector bundles on curves.
\end{abstract} 
\maketitle
\setcounter{tocdepth}{1}
\tableofcontents

The title of the actual lecture course also mentioned derived categories prominently. And
indeed, the original idea was to give an introduction to the basic techniques used to study
$\Db(X)$, the bounded derived category of coherent sheaves on a (smooth projective)
variety $X$, and, at the same time, to acquaint the audience with the slightly technical notion
of stability conditions on $\Db(X)$ as invented by Bridgeland following work of Douglas.
The lectures were delivered in this spirit and this writeup tries to reflect the actual lectures, but
the emphasis has been shifted towards stability conditions considerably.
It seemed worthwhile to spend most of the lectures just on stability conditions
and to present some of the arguments used to study this new notion in detail.

These notes are meant to be a gentle introduction to stability conditions and
not as a survey of the area, although we collect a few pointers to the literature
in the last section. We start out by recalling stability for
vector bundles on curves and slowly move to the more abstract version provided
by stability conditions on abelian and triangulated categories. Roughly,
a stability condition on a triangulated category can be thought of as a refinement
of a bounded t-structure and we shall explain this relation carefully. Bridgeland endows
the space of all stability conditions with a natural topo\-logy. This gives rise to a completely
new kind of moduli space which has been much studied over the last years. 
The ultimate hope is that  a good understanding of the space of stability conditions
leads to a better grip on the category itself. The best example for this is
an intriguing conjecture of Bridgeland describing the group of autoequivalences of the
derived category of a K3 surface as a fundamental group of an explicit `period domain'
for the space of stability conditions.

Stability conditions on $\Db(X)$ will be studied for $X$  smooth and projective of dimension one
or two, but we will not touch upon the many results for $X$ only quasi-projective or for
more algebraic categories coming from quiver representations
and there are many more results and aspects that are not covered by these lectures, e.g.\ wall crossing
phenomena. Originally, stability conditions were invented in order to 
study $\Db(X)$ for projective Calabi--Yau threefolds, but up to this date
and inspite many attempts, not a single stability condition has been constructed in 
this situation. A glance at the discussion in the case of surfaces quickly shows why
this is so complicated, but see \cite{BMT,BBMT} for attempts in this direction.

The material covered in these lectures is based almost entirely on the two articles
\cite{BrAnn} and \cite{BrK3} by Bridgeland. Frequently, we add examples, hint 
at related results and highlight certain aspects, but we also take the liberty
to leave out unpleasant technical points of the discussion. Only  Section \ref{sec:K3}
contains material that is not completely covered by the existing literature. Here, the conjecture
of Bridgeland is rephrased in terms of classical moduli stacks and  their fundamental groups.

\smallskip

{\bf Acknowledgements:} I wish to thank the organizers of the `School on moduli spaces' at the Newton Institute for inviting me and the audience for a stimulating and demanding atmosphere. Parts of the material were also used for lectures on stability conditions at Peking University in  the fall of 2006 and at Ann Arbor in the spring of 2011. I would like to thank both institutions for their hospitality. I am grateful to Pawel Sosna and the referee for a careful reading of the first version and the many comments.

\section{Torsion theories and ${\rm t}$-structures}
This first lecture begins in Section \ref{sec:Recol} with a review of stability for vector bundles on algebraic curves which we will rephrase
in terms of phases in order to motivate the notion of a stability condition. Similarly, we first give examples for decomposing
the abelian category $\coh(C)$ of coherent sheaves on a curve $C$ and turn this later, in Section \ref{sect:tt}, into the abstract 
concept of a torsion theory for abelian categories. Its triangulated counterpart, t-structures, will be recalled as well, cf.\ Section
\ref{sect:tstr}. The final part of the first lecture
is devoted to the interplay between torsion theories and t-structures via tilting.

\subsection{$\mu$-stability on curves (and surfaces): Recollections}\label{sec:Recol}

Consider a smooth projective curve $C$ over an algebraically closed field $k=\bar k$. Let
$\coh(C)$ denote the abelian category of coherent sheaves on $C$, which we will consider with its
natural $k$-linear structure.

Recall that a coherent sheaf $E\in\coh(C)$ is called \emph{$\mu$-stable} (resp.\ \emph{$\mu$-semistable}) if
$E$ is torsion free (i.e.\ locally free) and for all proper subsheaves $0\ne F\subset E$ one
has $\mu(F)<\mu(E)$ (resp.\ $\mu(F)\leq\mu(E)$). Here, $\mu(~~)=\frac{\deg(~~)}{\rk(~~)}$
is the slope.
We shall rewrite this in terms of a stability function which is better suited for the more
general notion of stability conditions on abelian or even triangulated categories.

Define, $$Z(E):=-\deg(E)+i\cdot\rk(E).$$
Then, the \emph{phase} $\phi(E)\in(0,1]$ of a sheaf $0\ne E$  is defined uniquely by the condition
$$Z(E)\in\exp(i\pi\phi(E))\cdot\RR_{>0}.$$ 
The \emph{stability function} $Z$ defines a map
$$Z:\coh(C)\setminus\{0\}\to\overline\HH:=\HH\cup\RR_{<0}.$$
$$\hskip-3cm
\begin{picture}(-100,100)
 {\linethickness{0.005mm}
 \put(-100,24){\line(1,1){60}}
 \put(-80,24){\line(1,1){60}}
\put(-60,24){\line(1,1){60}}
\put(-40,24){\line(1,1){60}}
\put(-20,24){\line(1,1){60}}
\put(0,24){\line(1,1){60}}
\put(20,24){\line(1,1){60}}
\put(40,24){\line(1,1){60}}
 \put(60,24){\line(1,1){60}}}
{  \linethickness{0.55mm}
\put(-100,20){\line(1,0){100}}
\put(2,20){\circle{4}}}
{  \linethickness{0.015mm}
\put(4,20){\line(1,0){5}}
\put(14,20){\line(1,0){5}}
\put(24,20){\line(1,0){5}}
\put(34,20){\line(1,0){5}}
\put(44,20){\line(1,0){5}}
\put(54,20){\line(1,0){5}}
\put(64,20){\line(1,0){5}}
}
\end{picture}
$$

\smallskip

\emph{Warning:} Later, the phase of an object in a triangulated category
is only well defined if the object is semistable or at least contained in the heart
of the associated t-structure.

\begin{remark}
i) If $\rk(E)>0$, e.g.\ when $E$ is locally free, then $Z(E)\in\HH$. More precisely, $Z(E)\in\RR_{<0}$
if and only if $E$ is torsion. In particular, $Z(k(x))=-1$.

ii) Note that $Z$ is additive, i.e.\ $Z(E_2)=Z(E_1)+Z(E_3)$ for any short exact sequence
$0\to E_1\to E_2\to E_3\to0$. Thus, it factorizes over the \emph{Grothendieck group}
$K(C)=K(\coh(C))$ of the abelian category $\coh(C)$, i.e.\
$Z:\coh(C)\to K(C)\to\CC$. The map $K(C)\to\CC$ is an additive group homomorphism.
\end{remark}

The following easy observation is important for motivating the notion of a stability condition later on.

\begin{lem}
Suppose $E\in\coh(C)$ is locally free. Then $E$ is $\mu$-stable if and only if for all proper subsheaves
$0\ne F\subset E$ the following inequality of phases holds true:
\begin{equation}\label{eqn:phase}
\phi(F)<\phi(E).
\end{equation}
\end{lem}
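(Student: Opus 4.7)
The plan is to translate the slope inequality into a phase inequality by directly computing the argument of $Z(\cdot)$. The main observation is that since $E$ is locally free on a smooth curve, any nonzero subsheaf $F\subset E$ is torsion free, hence locally free, so in particular $\rk(F)>0$. Therefore $Z(F)$ and $Z(E)$ both lie in the (open) upper half plane $\HH$, and their phases satisfy $\phi(F),\phi(E)\in(0,1)$.

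Next, I would compute the argument explicitly. For any locally free sheaf $G$ with $\rk(G)>0$, the vector
$$\frac{Z(G)}{\rk(G)}=-\mu(G)+i$$
lies on the horizontal line $\{\im z=1\}$ and has the same phase as $Z(G)$. From this normalization one reads off
$$\cot(\pi\phi(G))=-\mu(G).$$
Applying this to both $F$ and $E$, and using that $\cot$ is strictly decreasing on $(0,\pi)$, one obtains the chain of equivalences
$$\phi(F)<\phi(E)\iff\cot(\pi\phi(F))>\cot(\pi\phi(E))\iff-\mu(F)>-\mu(E)\iff\mu(F)<\mu(E).$$
Quantifying over all proper subsheaves $0\neq F\subset E$ yields the claimed equivalence between $\mu$-stability and the phase inequality \eqref{eqn:phase}.

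There is essentially no hard step: the only thing to watch out for is the possibility that $F$ is torsion, which would place $Z(F)$ on the negative real axis and give $\phi(F)=1$; but this case is ruled out at the outset because subsheaves of a locally free sheaf on a smooth curve are automatically locally free. The $\mu$-semistable variant (with $\le$) would follow by exactly the same computation with strict inequalities replaced by non-strict ones, though it is not part of the stated lemma.
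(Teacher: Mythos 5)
Your proof is correct and follows essentially the same route as the paper: note that subsheaves of a locally free sheaf on a smooth curve are locally free, divide $Z$ by the rank to land on the line $\{\im z=1\}$, and compare slopes via phases. The only cosmetic difference is that you make the monotonicity of $\cot$ on $(0,\pi)$ explicit where the paper reads the inequality off a picture (and records the formula $\mu(E)=-\cot(\pi\phi(E))$ only in a subsequent exercise).
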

\begin{proof}
Since $E$ is locally free (and thus all non-trivial subsheaves $F\subset E$ are), we can divide by the rank.
Thus, $\frac{Z(E)}{\rk(E)}=-\mu(E)+i$ and then $\mu(F)<\mu(E)$ if and only if $-\mu(E)<-\mu(F)$ if and only
if $\phi(E)>\phi(F)$.

\vskip1cm

$$\hskip-3cm
\begin{picture}(-100,100)
{  \linethickness{0.055mm}
\put(-100,60){\line(1,0){169}}}
{  \linethickness{0.255mm}
\put(0,60){\line(1,0){4}}}
\put(5,63){\tiny\makebox{${i}$}}
{  \linethickness{0.0005mm}
\put(2,22){\line(0,1){100}}}
{  \linethickness{0.255mm}
\put(-100,20){\line(1,0){100}}
\put(2,20){\circle{4}}}
{ \linethickness{0.255mm}
\put(-76,18){\line(0,1){4}}}
\put(-90,8){\tiny\makebox{$-\mu(E)$}}
\put(-90,70){\tiny\makebox{$\frac{Z(E)}{{\rm rk}(E)}$}}
{ \linethickness{0.255mm}
\put(-36,18){\line(0,1){4}}}
\put(-50,8){\tiny\makebox{$-\mu(F)$}}
\put(-50,70){\tiny\makebox{$\frac{Z(F)}{{\rm rk}(F)}$}}
{  \linethickness{0.015mm}
\put(4,20){\line(1,0){5}}
\put(14,20){\line(1,0){5}}
\put(24,20){\line(1,0){5}}
\put(34,20){\line(1,0){5}}
\put(44,20){\line(1,0){5}}
\put(54,20){\line(1,0){5}}
\put(64,20){\line(1,0){5}}}
\put(2,30){
\qbezier(-6.3,-2.3)(-3.3,0.3)(0,0)}
\put(2,20){\oval(20,20)[rt]}
\put(5,12){\tiny\mbox{${\phi(F)}$}}
\put(2,30){
\qbezier(-17.8,-0.8)(-13.4,8.6)(0,10)}
\put(2,20){\oval(40,40)[rt]}
\put(12,40){\tiny\mbox{${\phi(E)}$}}
{ \linethickness{0.015mm}
\put(1,21.7){\vector(-1,1){38.3}}}
\put(-37,60){\circle*{2}}
{ \linethickness{0.025mm}
\put(0.2,21.2){\vector(-2,1){77.8}}}
\put(-77,60){\circle*{2}}
\end{picture}
$$
\vskip-0.5cm
\end{proof}

We leave it to the reader to rephrase $\mu$-semistability as a weak inequality for phases.

\begin{exer}
What happens if we forget about the assumption that $E$ is locally free?
Clearly, (\ref{eqn:phase}) for  arbitrary $E\in\coh(C)$ is equivalent to
$E$ being either a $\mu$-stable sheaf (and by definition in particular locally free) or
$E\cong k(x)$ for some closed point $x\in C$. The weak form of (\ref{eqn:phase})
is equivalent to $E$ being either a $\mu$-semistable sheaf (and in particular locally free)
or  a torsion sheaf.
\end{exer}

Thus, it seems natural to define (semi)stability (instead of $\mu$-(semi)sta\-bility) 
in the abelian category $\coh(C)$ in terms of the (weak) inequality (\ref{eqn:phase}).
It allows us to treat vector bundles and torsion sheaves on the same footing.
So from now on: {\bf \emph{Use  phases rather than slopes}}. 

\begin{exer}
Observe the useful formulae: $\mu(E)=-\cot(\pi\phi(E))$ and $\pi\phi(E)={\rm arcot}(-\mu(E))$.\
\end{exer}

Let us continue with the review of the classical theory of stable vector bundles on curves. The next step
consists of establishing the existence of \emph{Harder--Narasimhan} and \emph{Jordan--H\"older}
filtrations which filter any given sheaf such that the quotients are semistable resp.\ stable. 

Every $E\in\coh(C)$ admits a unique (Harder--Narasimhan) filtration
$0=E_0\subset E_1\subsetneq E_2\subsetneq\ldots\subsetneq E_n=E$ such that
the quotients $A_1:=E_1/E_{0},\ldots A_n=E_n/E_{n-1}$ are 
semistable sheaves of phase $\phi_1>\ldots >\phi_n$.

$$\hskip-3cm
\begin{picture}(-100,100)
 {\linethickness{0.005mm}
\put(1,21.6){\vector(-1,1){40}}
\put(-45,65){\tiny\mbox{$Z(E)$}}
\put(5,55){\tiny\mbox{$\ldots\ldots$}}
\put(0.4,21.2){\vector(-2,1){30}}
\put(-52,35){\tiny\mbox{$Z(A_2)$}}
\put(3.8,21.5){\vector(2,1){50}}}
\put(57,45){\tiny\makebox{$Z(A_n)$}}
\put(0,20){\vector(-1,0){60}}
\put(-60,10){\tiny\mbox{$Z(A_1)$}}

{  \linethickness{0.355mm}
\put(-100,20){\line(1,0){100}}
\put(2,20){\circle{4}}}
{  \linethickness{0.015mm}
\put(4,20){\line(1,0){5}}
\put(14,20){\line(1,0){5}}
\put(24,20){\line(1,0){5}}
\put(34,20){\line(1,0){5}}
\put(44,20){\line(1,0){5}}
\put(54,20){\line(1,0){5}}
\put(64,20){\line(1,0){5}}
}
\end{picture}
$$

Thus, $E_1$ is the torsion of $E$, which might  be trivial. To simplify notations,
we implicitly allow $E_1=0$, although $\phi_1$ would not be well defined in this case.
But the other inclusions are strict. The $A_i$ are called the \emph{semistable factors} of $E$. They are unique.

As a refinement of the Harder--Narasimhan filtration, one can also construct a (Jordan--H\"older) filtration
$0=E_0\subsetneq E_1\subsetneq E_2\subsetneq\ldots\subsetneq E_n=E$ of any $E\in\coh(C)$.
This time, the quotients $A_1:=E_1/E_{0},\ldots, A_n=E_n/E_{n-1}$ are 
stable sheaves of phase $\phi_1\geq\ldots \geq\phi_n$. 

Note that for $E$ not locally free, the first few $A_i$ are of the form $k(x_i)$, $x_i\in C$.
The $A_i$ are called the \emph{stable factors} of $E$.
The Jordan--H\"older filtration is in general not unique and the stable factors
are unique only up to permutation. 

\begin{exer}\label{rem:GP} Here are a few principles that hold true in the
general context of stability conditions on abelian or triangulated categories with identical proofs.

i) If $E,F\in\coh(C)$ are semistable (resp.\ stable) such that $\phi(E)>\phi(F)$ (resp.\ $\phi(E)\geq\phi(F)$), then $$\Hom(E,F)=0.$$

ii) If $E,F\in\coh(C)$ are stable such that $\phi(E)\geq\phi(F)$, then $$\text {either~}E\cong F \text{~or~}
\Hom(E,F)=0.$$

iii) If $E$ is stable, then $\End(E)\cong k$. (Recall $k=\bar k$.)
\end{exer}

\begin{exer}\label{exer:allsta}
If $E$ is semistable, then all stable factors $A_i$ of $E$ have the same phase $\phi(E)$.
Suppose $\Hom(A_{i_0},E)\ne0$ for some stable factor $A_{i_0}$ of $E$, then there exists a short exact sequence
$$0\to E'\to E\to E''\to 0$$
with $E',E''$ semistable of phase $\phi(E)$, such that all stable factors of $E'$ are isomorphic
to $A_{i_0}$ and $\Hom(E',E'')=0$ (or, equivalently, $\Hom(A_{i_0},E'')=0$).
Note that $E''=0$ if and only all $A_i$ are isomorphic to $A_{i_0}$.
\end{exer}

\begin{definition}\label{dfb:catsstPcurve}
For $\phi\in(0,1]$ one defines $$\kp(\phi):=\{E\in\coh(C)~|~\text{semistable of phase }\phi\},$$
which we will consider as a full linear subcategory of $\coh(C)$.
\end{definition}

Note that $\kp(1)\subset\coh(C)$ is the full subcategory of torsion sheaves. More generally, $\kp(\phi)\subset\coh(C)$
are full abelian subcategories of \emph{finite length}, i.e.\ ascending and descending chains of subobjects stabilize.
In fact, the \emph{minimal}\footnote{In representation theory, they would be called \emph{simple}, but this has another meaning
for sheaves. Also note that the minimal objects in the category $\coh(C)$ are the point sheaves $k(x)$, $x\in C$. In particular, $\coh(C)$
is not of finite length.}
objects in $\kp(\phi)$, i.e.\ those that do not contain any proper subobjects in $\kp(\phi)$,
are exactly the stable sheaves of phase $\phi$. The finite length of $\kp(\phi)$ is thus a consequence of the existence
of the finite(!) Jordan--H\"older filtration. In the general context, the finite length condition is tricky. Here,
it follows easily as the stability function $Z=-\deg+i\cdot\rk$ is rational (cf.\ Proposition \ref{prop:discZ}).

For any interval $I\subset (0,1]$ one defines $\kp(I)$ as the full subcategory of sheaves $E\in\coh(C)$ with (semi)stable
factors $A_i$ having phase $\phi(A_i)$ in $I$. They are only additive subcategories of $\coh(C)$ in general.

\vskip1cm
$$
\hskip-2cm
\begin{picture}(-100,100)
{  \linethickness{0.255mm}
\put(-100,20){\line(1,0){100}}
\put(1.3,22){\line(-1,2){40}}
\put(2,20){\circle{4}}
\put(0.4,21.3){\line(-2,1){90}}}
\put(-49,107){\tiny\makebox{$\kp(\phi)$}}
\put(-110,70){\tiny\makebox{$\kp(\phi')$}}
\put(-57,64){\tiny\makebox{$\kp(\phi,\phi')$}}
\put(-77,35){\tiny\makebox{$\kp(>\!\phi')$}}
{  \linethickness{0.21mm}
\put(1.8,22){\line(0,1){80}}}
\put(-2.8,107){\tiny\makebox{$\kp(\frac{1}{2})$}}
{  \linethickness{0.015mm}
\put(4,20){\line(1,0){5}}
\put(14,20){\line(1,0){5}}
\put(24,20){\line(1,0){5}}
\put(34,20){\line(1,0){5}}
\put(44,20){\line(1,0){5}}
\put(54,20){\line(1,0){5}}
\put(64,20){\line(1,0){5}}}
{\linethickness{0.15mm}
\put(2,30){\qbezier(-4.3,-1)(-3.3,0.1)(0,0)}
\put(2,20){\oval(20,20)[rt]}}
\put(10,27){\tiny\mbox{$\phi$}}
{  \thinlines
\put(-33,42){\line(1,1){15}}
\put(-68,58){\line(1,1){32}}
\put(-78,64){\line(1,1){36}}
\put(-23,36){\line(1,1){10}}
\put(-43,47){\line(1,1){19}}
\put(-40,68){\line(1,1){10}}
\put(-57,52){\line(1,1){10}}
}
\end{picture}
$$

\smallskip

The next remark will lead us naturally to the notion of a torsion theory.

\begin{remark}\label{rem:torsionCoh(C)}
i) Let us look at the following special case: For $\varphi\in(0,1]$ consider
 $\kt_\varphi:=\kp(\varphi,1]$ and $\kf_\varphi:=\kp(0,\varphi]$.
Then for all $E\in\coh(C)$ there exists a unique short exact sequence
$$0\to E'\to E\to E''\to0$$
with $E'\in\kt_\varphi$ and $E''\in\kf_\varphi$. The uniqueness follows from the observation
that $\Hom(\kt_\varphi,\kf_\varphi)=0$, i.e.\ there are no non-trivial homomorphisms from any object in $\kt_\varphi$ to
any object in $\kf_\varphi$. 
 
ii) Similarly, if we let $\kt:=\kp(1)$ and $\kf:=\kp(0,1)$, one obtains as above a short exact sequence
 with $E'$ being the torsion of $E$.
\end{remark}

\begin{remark}
 What happens if we pass from curves to surfaces? So let $S$ be a smooth projective surface with an ample 
(or big and nef) divisor $H$ viewed as an element of the real vector space
${\rm NS}(S)\otimes\RR$. Then ${\rm deg}_H(E)=({\rm c}_1(E).H)$ is well-defined and $\mu_H(E)={\rm deg}_H(E)/\rk(E)$
can be used to define $\mu$-stability for torsion free sheaves. A priori, one could now try to play the same
game with the function $Z_H=-{\rm \deg}_H+i\cdot\rk$, but there are immediate problems:
i) $Z_H(E)=0$ for sheaves supported in dimension zero, so $Z_H$ takes values in $\overline \HH\cup\{0\}$. ii) There is no Jordan--H\"older filtration for sheaves of dimension one, i.e.\ those supported on the curve. The rank (on $S$) of such a sheaf is
zero and its degree only reflects its rank as a sheaf on its support. Thus, for a curve $C\subset S$ all 
bundles on $C$ would be semistable of phase $1$ on $S$. (But see \cite{MP}, where it is shown
that the quotient of $\coh(S)$ by the subcategory of $0$-dimensional sheaves together with $Z_H$ behaves almost
like $\coh(C)$.)

Although the situation seems more complicated than for curves, it has one interesting
feature that is not present in the case of curves:
The  function $Z_H$ itself depends on a parameter, namely $H\in{\rm NS}(S)\otimes \RR$.
We will come back to this later.

Since it is known that on higher dimensional varieties one should rather work
with Gieseker stability than $\mu$-stability, i.e.\ taking the full Hilbert polynomial into account
for defining stability, one could try to adapt the approach here accordingly (see \cite{GR}), which taken
literally leads to a theory in which the stability function will take values in a higher dimensional
space and not simply  in $\CC=\RR^2$.
\end{remark}

\subsection{Torsion theories in abelian categories}\label{sect:tt}

The  abstract notion of a torsion theory has been first introduced by Dickson in \cite{Dick}, but was implicitly already present in earlier work of Gabriel.
The two standard references are \cite{HRS,BR}. In the following, we denote by $\ka$ an abelian category. Usually, $\ka$ is linear over some field $k$, but this will not be important for now.

The following notion is the abstract version  of the two examples in Remark
\ref{rem:torsionCoh(C)}. 

\begin{definition}
A \emph{torsion theory} (or \emph{torsion pair}) for $\ka$ consists of a pair of full subcategories $\kt,\kf\subset\ka$ such that:

i) $\Hom(\kt,\kf)=0$ and 

ii) For any $E\in\ka$ there exists a short exact sequence
\begin{equation}\label{eqn:sestorsion}
0\to E'\to E\to E''\to 0
\end{equation}
with $E'\in \kt$ and $E''\in\kf$.
\end{definition}

For obvious reasons, one calls the  (in general only additive) subcategories $\kt\subset\ka$ and $\kf\subset\ka$ the torsion part
resp.\ torsion free part of the torsion theory.

\begin{exer}
i) The short exact sequence (\ref{eqn:sestorsion}) is unique. For this, one 
uses $\Hom(\kt,\kf)=0$.

ii) The inclusion $\kt\subset\ka$ admits a right adjoint $T_\kt:\ka\to\kt$ given by mapping $E\in\ka$ to its torsion part
$T_\kt(E):=E'$ in (\ref{eqn:sestorsion}), i.e.\ for all $T\in\kt$ one has
$\Hom_\ka(T,E)=\Hom_\kt(T,T_{\kt}(E))$.

iii) Similarly, $\kf\subset\ka$ admits the left adjoint $\ka\to\kf$, $E\mapsto E/T_\kt(E)$, i.e.\
$\Hom_\ka(E,F)=\Hom_\kf(E/T_\kt(E),F)$ for all $F\in\kf$.

iv) The subcategories $\kt,\kf\subset\ka$ are closed under extensions. Moreover, $\kt$ is closed under quotients and $\kf$ is closed
under subobjects.

v) If $\kt^\perp$ is defined as the full subcategory of all objects $E\in\ka$ such that $\Hom(T,E)=0$ for all $T\in\kt$. Then
$\kf=\kt^\perp$. Similarly, $\kt=\!{}^\perp\kf$.
\end{exer}
\begin{remark}
In the same manner, one can prove the following useful fact (see e.g.\ \cite[Prop.\ 1.2]{BR}): For a full additive subcategory
$\kt\subset\ka$, which is closed under isomorphisms, $(\kt,\kt^\perp)$ defines a torsion pair if and only if $\kt\subset\ka$ admits a right
adjoint $T_\kt:\ka\to\kt$ and $\kt$ is closed under right exact sequences (i.e.\ if $T_1\to E\to T_2\to0$ is exact with $T_1,T_2\in\kt$, then also $E\in\kt$).
\end{remark}
In Section \ref{sec:tilts} we shall explain how to tilt an abelian category with respect to a given torsion theory. This is a construction that takes place naturally within the bounded derived category. For a more direct construction not using
derived categories, see Noohi's article \cite{No}. He associates to a torsion theory $(\kt,\kf)$ for an
abelian category directly a new abelian category $\kb$ the objects of which are of the form $[\varphi:E^{-1}\to E^0]$
with ${\rm ker}(\varphi)\in\kf$ and ${\rm coker}(\varphi)\in\kt$. The definition of morphisms in $\kb$ 
is a little more involved but still very explicit.  It is also shown in \cite{No} that $\kb$ describes a category that
is equivalent to the Happel--Reiten--Smal{\o} tilt to be discussed below.
\subsection{t-structures on triangulated categories}\label{sect:tstr}
In the following we shall denote by $\kd$ a triangulated category, e.g.\ $\kd$ could be the bounded derived category
$\Db(\ka)$
of an abelian category $\ka$. A triangulated category
$\kd$ is an additive (but not abelian!) category endowed with two additional structures: The \emph{shift functor}, an equivalence
$\kd\congpf\kd$, $E\mapsto E[1]$, and a collection of \emph{exact triangles} $E\to F\to G\to E[1]$ replacing short exact sequences
in abelian categories. They are subject to axioms TR 1-4, see e.g.\ \cite{GM,Hart,KS,Neem,Ver}.
E.g.\ one requires that with $E\to F\to G\to E[1]$ also 
$\xymatrix@C=1.5em{F\ar[r]& G\ar[r]& E[1]\ar[r]&F[1]}$ is an exact triangle. Another consequence of the axioms
is that for any such exact triangle
one obtains long exact sequences $$\Hom^{i-1}(A,G)\to\Hom^i(A,E)\to\Hom^i(A,F)$$ and
$$\Hom^i(F,A)\to\Hom^i(E,A)\to\Hom^{i+1}(G,A).$$ 

Here, $\Hom^i(A,B):=\Hom(A,B[i])=:\Ext^i (A,B)$.

In the following, we shall often simply write $E\to F\to G$ for an exact triangle $E\to F\to G\to E[1]$.
\begin{definition}\label{def:tstr}
A \emph{t-structure} on a triangulated category $\kd$ consists of a pair of full additive subcategories
$(\kt,\kf)$ such that:

i) $\kf=\kt^\perp$.

ii) For all $E\in\kd$ there exists an exact triangle
\begin{equation}\label{eqn:exact3}
E'\to E\to E''
\end{equation}
 with $E'\in\kt$ and $E''\in\kf$.

iii) $\kt[1]\subset\kt$.
\end{definition}

The first two conditions are reminiscent of the definition of  a torsion theory for abelian categories and we will comment
on this relation below. The last condition takes the triangulated structure into account. Note that we do not require
$\kt[1]=\kt$.\footnote{Note that $\kt=\kd$ and $\kf=0$ defines a t-structure, which of course satisfies
$\kt[1]=\kt$. But it is neither bounded nor very useful.}

To stress the analogy to torsion theories, we used the notation $\kt$ and $\kf$. More commonly however,
one writes $\kd^{\leq0}:=\kt$ and $\kd^{\geq1}=\kd^{>0}:=\kf$. With
$\kd^{\leq -i}:=\kd^{\leq0}[i]$ one gets $$\ldots\subset\kd^{\leq-2}\subset\kd^{\leq-1}\subset\kd^{\leq0}\subset\kd^{\leq1}\subset\ldots.$$

As for torsion theories, the inclusion $\kd^{\leq0}\subset\kd$ has a right adjoint $\tau^{\leq0}:\kd\to\kd^{\leq0}$,
$E\mapsto \tau^{\leq0}E:=E'$ (as in (\ref{eqn:exact3})). Similarly, $\tau^{\geq1}:\kd\to\kd^{\geq1}$ defines a left adjoint to the inclusion
$\kd^{\geq1}\subset\kd$. Thus, (\ref{eqn:exact3}) can be written as an exact triangle
$$\tau^{\leq0}E\to E\to\tau^{\geq1}E.$$

The \emph{heart} of a  t-structure is defined as $$\ka:=\kd^{\leq0}\cap\kd^{\geq0},$$
which in the notation of Definition \ref{def:tstr} is $\ka=\kt\cap\kf[1]$.
It is an abelian category and short exact sequences in $\ka$ are precisely the exact triangles
in $\kd$ with objects in $\ka$. Moreover, one has cohomology functors $H^i:\kd\to\ka$, $E\mapsto (\tau^{\geq i}\tau^{\leq i}E)[i]$.

A t-structure on a triangulated category $\kd$ is \emph{bounded} if  every $E\in\kd$ is contained in $\kd^{\leq n}\cap\kd^{\geq -n}$ for $n\gg0$. 

\begin{ex}
For $\kd=\Db(\ka)$ the standard bounded t-structure is given by $\kd^{\leq 0}:=\{ E~|~H^i(E)=0 ~i>0\}$. Then,
$\kd^{\geq 0}=\{ E~|~H^i(E)=0~i<0\}$
and its heart is the original abelian category $\ka\subset\kd$ in degree zero.
\end{ex}

\begin{remark}
i) Not every triangulated category $\kd$  admits a bounded t-structure. In fact, the existence of a bounded t-structure
on $\kd$  implies that $\kd$ is  \emph{idempotent closed} (or, \emph{Karoubian}). This is a folklore result (see \cite{Le}).
Recall that a morphism $e:E\to E$ in $\kd$ is idempotent if $e^2=e$. An idempotent morphism $e:E\to E$ is split if
$e=b\circ a:E\to F\to E$ with $a\circ b={\rm id}$ or, equivalently, if $E\cong F\oplus F'$ and $e$ is the composition of 
the natural projection and  inclusion  $E\twoheadrightarrow F\,\,\hookrightarrow E$. (For the equivalence of
the two descriptions the triangulated structure is needed.) The category $\kd$ is called idempotent
closed if every idempotent is split. For a given t-structure on $\kd$ any idempotent $e:E\to E$ yields
idempotent morphisms $\tau^{\leq 0}e:\tau^{\leq 0}E\to\tau^{\leq 0}E$ and $\tau^{> 0}e:\tau^{> 0}E\to\tau^{> 0}E$. 
One shows that $e$ is split if $\tau^{\leq0}e$ and $\tau^{>0}e$ are split. For a bounded t-structure this allows one to reduce
to objects in the (shifted) heart $\ka$. But clearly, due to the existence of kernels and cokernels in abelian categories,
any idempotent in $\ka$ splits.

ii) Suppose  $\kd'\subset\kd$ is a  triangulated subcategory. It is called \emph{dense} if every object $F\in\kd$ is a direct summand of an object in $\kd'$, i.e.\ there exists $F'\in\kd$ with $E\cong F\oplus F'\in\kd'$. 
If $F\not\in \kd'$, then the induced natural idempotent $E\to E$ splits only in $\kd$, but not in $\kd'$.  For a concrete
example, take the smallest full triangulated subcategory  $\kd'\subset \Db(\coh(C))$ that contains all line bundles
$\ko(np)$, where $C$ is a curve of genus $>0$ and $p\in C$ is a fixed closed point. 
In particular, $\kd'$ is dense in $\kd$ but itself not idempotent closed.
\end{remark}

\begin{remark}\label{rem:BrAnn}
For the following see \cite[Lemma 3.2]{BrAnn}. If $\kd$ is a triangulated category and $\ka\subset \kd$
is a full additive subcategory. Then $\ka$ is the heart of a bounded t-structure if and only if

i) $\Hom(\ka[k_1],\ka[k_2])=0$ for $k_1>k_2$ and

ii) For any $E\in \kd$ there exists a diagram
$$\xymatrix@C=1em{0=E_0~~~\ar[rr]&&E_1\ar[dl]\ar[rr]&&E_2\ar[dl]\ldots\ar[rr]&&E_{n-1}\ar[rr]&&E_n=E\ar[dl]\\
&A_1\ar[lu]^{[1]}&&A_2\ar[lu]^{[1]}&&&&A_{n}\ar[lu]^{[1]}&&}$$
where $E_{i-1}\to E_i\to A_i$ are exact triangles with $A_i\in\ka[k_i]$ and $k_1>\ldots>k_n$.

Note that due to i) the diagram is unique. If i) and ii) hold, then the corres\-ponding bounded t-structure is
defined by $\kd^{\leq0}:=\{E~|~A_i=0\text{ for }k_i>0\}$. Conversely, if the bounded t-structure is given, then
$E_1:=\tau^{\leq -k_1}(E)=A_1\in\ka[k_i]$, where $-k_1=\min\{ i~|~H^i(E)\ne0\}$. 
\end{remark}

In general, t-structures are not preserved by equivalences. Positively speaking, if $\Phi:\kd\congpf\kd$ is an exact autoequivalence 
of a triangulated cate\-gory and $\kd^{\leq0}\subset\kd$ defines a 
t-structure, then $\Phi(\kd^{\leq0})\subset\kd$ defines a new
 t-structure which is usually different from the original one.
\subsection{Torsion theories versus t-structures}\label{sec:tilts}

So far, we have seen t-struc\-tures as an analogue of torsion theories for abelian categories adapted to the triangulated setting. Another aspect of torsion theories for abelian categories is that they lead to new t-structures on triangulated categories.

Let us start with an abelian category $\ka$ and a torsion theory for it given by $\kt,\kf\subset\ka$.
Then consider the \emph{tilt} of $\ka$ with respect to $(\kt,\kf)$ defined
as the full additive subcategory $\ka^\sharp:=Tilt_{(\kt,\kf)}(\ka)\subset\Db(\ka)$ of all objects
$E\in\Db(\ka)$ with
$$H^0(E)\in\kt,~H^{-1}(E)\in\kf,\text{ and }H^i(E)=0\text{ for }i\ne0,-1.$$
Note that the definition makes sense in the more general context when $\ka$ is the heart
of a t-structure on a triangulated category $\kd$. In this case, the $H^i$ are the cohomology
functors that come with the t-structure.

For any $E\in\ka$ there exists a short exact sequence in $\ka$ or, equivalently, an exact triangle in $\kd$
of the form $T\to E\to F$ with $T\in\kt$ and $F\in\kf$. The latter corresponds to a class in $\Ext^1(F,T)$.
On the other hand, for $E\in\ka^\sharp$ there exists an exact triangle $F[1]\to E\to T$, which  corresponds to a class
in $\Ext^2(T,F)$. 

The torsion theory $(\kt,\kf)$ for $\ka$ gives rise to the torsion theo\-ry
$(\kf[1],\kt)$ for the tilt $\ka^\sharp$. If $\kt$ and $\kf$ are both non-trivial, then $\ka\ne\ka^\sharp$
(even after shift). If $\kf=0$, then $\ka=\ka^\sharp$
and if $\kt=0$, then $\ka^\sharp=\ka[1]$.

The important fact is the following result proved in \cite{HRS}, which we leave as an exercise (use
Remark \ref{rem:BrAnn}).

\begin{prop}
The category $\ka^\sharp$ is the heart of a bounded t-structure on $\Db(\ka)$.\qqed
\end{prop}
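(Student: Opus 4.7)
The plan is to verify the two criteria of Remark~\ref{rem:BrAnn} for $\ka^\sharp\subset\Db(\ka)$, which will identify it as the heart of a bounded t-structure.

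For criterion (i), I need $\Hom(E,F[n])=0$ for all $E,F\in\ka^\sharp$ and all $n<0$. Each object of $\ka^\sharp$ sits in the canonical triangle $H^{-1}(E)[1]\to E\to H^0(E)$ coming from the standard t-structure, with $H^{-1}(E)\in\kf$ and $H^0(E)\in\kt$, and similarly for $F$. Applying $\Hom(-,F[n])$ to the triangle for $E$ reduces the problem to computing $\Hom(H^0(E),F[n])$ and $\Hom(H^{-1}(E),F[n-1])$; applying $\Hom(H^0(E),-)$ and $\Hom(H^{-1}(E),-)$ to the triangle for $F$ then expresses everything in terms of $\Ext^m_\ka$-groups between objects of $\ka$. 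For $n\le-2$ every exponent $m$ occurring is strictly negative, so all of these vanish. For $n=-1$ the only potentially nonzero contribution is $\Hom_\ka(H^0(E),H^{-1}(F))$, which vanishes because $H^0(E)\in\kt$, $H^{-1}(F)\in\kf$, and $\Hom(\kt,\kf)=0$; this is precisely where the torsion theory axiom is used.

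For criterion (ii), I start with the Postnikov tower of $E\in\Db(\ka)$ provided by the standard t-structure, whose successive cones are $H^i(E)[-i]\in\ka[-i]$ for the (finitely many) $i$ with $H^i(E)\ne0$, and whose shifts $-i$ are already strictly decreasing. For each such $i$, the torsion pair gives a short exact sequence $0\to T^i\to H^i(E)\to F^i\to 0$ in $\ka$, which becomes a triangle $T^i[-i]\to H^i(E)[-i]\to F^i[-i]$ with $T^i[-i]\in\ka^\sharp[-i]$ (since $T^i\in\kt\subset\ka^\sharp$) and $F^i[-i]\in\ka^\sharp[-i-1]$ (since $F^i[1]\in\ka^\sharp$). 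A sequence of octahedra splices these refinements into the original tower to produce a longer Postnikov tower whose successive cones all lie in shifts of $\ka^\sharp$.

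The only real obstacle is that the refined tower can have two adjacent cones sharing a shift: $F^i[-i]\in\ka^\sharp[-i-1]$ stands right next to $T^{i+1}[-i-1]\in\ka^\sharp[-i-1]$. To recover the strict inequality $k_1>k_2>\ldots$ demanded by Remark~\ref{rem:BrAnn}(ii), I collapse such adjacent pieces by one more octahedron; the combined cone lies in $\ka^\sharp[-i-1]$ because $\ka^\sharp$ is closed under extensions. This closure is a short diagram chase on the long exact cohomology sequence of a triangle $X\to Z\to Y$ with $X,Y\in\ka^\sharp$: the only possibly nonzero cohomologies of $Z$ are $H^{-1}(Z)$, which is an extension of a subobject of $H^{-1}(Y)\in\kf$ by $H^{-1}(X)\in\kf$ and so lies in $\kf$, and $H^0(Z)$, which is an extension of $H^0(Y)\in\kt$ by a quotient of $H^0(X)\in\kt$ and so lies in $\kt$. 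Here one uses that $\kf$ is closed under subobjects and extensions, and $\kt$ under quotients and extensions. After collapsing, the shifts become strictly decreasing, and both conditions of Remark~\ref{rem:BrAnn} are verified.
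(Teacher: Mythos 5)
Your argument is correct and is exactly the intended one: the paper leaves this result as an exercise with the hint to use Remark~\ref{rem:BrAnn}, and you verify both of its conditions — the vanishing of negative Homs (where the only nontrivial case, $\Hom_\ka(H^0(E),H^{-1}(F))$, is killed by $\Hom(\kt,\kf)=0$) and the existence of the refined Postnikov tower, including the necessary collapsing of adjacent cones via extension-closedness of $\ka^\sharp$. I see no gaps.
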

The analogous statement for the heart $\ka$ of a bounded t-structure on  a triangulated category
$\kd$ also holds. 

\begin{remark}
At this point it is natural to wonder what the relation between $\Db(\ka)$ and $\Db(\ka^\sharp)$ is. This
question is addressed in \cite{HRS} and \cite{BvdB}. E.g.\ it is proved that the bounded derived categories
of $\ka$ and its tilt $\ka^\sharp$ are equivalent, if the torsion theory $(\kt,\kf)$ for $\ka$ is \emph{cotilting}.
This means that for all $E\in\ka$ there exists a `torsion free' object $F\in\kf$ and an epimorphism $F\twoheadrightarrow E$.
Similarly, a torsion theory is \emph{tilting} if for all $E\in\ka$ there exists a `torsion' object $T\in\kt$ and a monomorphism
$E\,\,\hookrightarrow T$. One verifies that $(\kt,\kf)$ is tilting for $\ka$ if and only if $(\kf[1],\kt)$ is cotilting for
$\ka^\sharp$ (see \cite{HRS}).
\end{remark}

Our standard example (see Remark \ref{rem:torsionCoh(C)}) starts with the torsion theory $(\kt_\varphi,\kf_\varphi)$, $\varphi\in(0,1]$,
for $\coh(C)$, where as before $C$ is a smooth projective curve. We call its tilt $\ka_\varphi\subset\Db(C)$.
This construction describes a \emph{family of bounded t-structures} that depends on the parameter $\varphi\in(0,1]$. Later however, we will see that this family is induced by a natural $\widetilde{\rm GL}\!{}^+(2,\RR)$-action on the space of stability conditions. But
there are more interesting families of t-structures that are not of this form.

The following picture of the image of $\ka=\coh(C)$ and $\ka_\varphi$ under the stability function
$Z=-\deg+i\cdot\rk$ might be helpful to visualize the tilt. In particular, for a $\mu$-stable vector bundle $E$ of slope
$\mu(E)$ one has $E\in\kt_\varphi\subset\ka_\varphi$ if $\mu(E)<-\cot(\pi\varphi)$, but
$E[1]\in\kf_\varphi\subset\ka_\varphi$ if $\mu(E)\geq-\cot(\pi\varphi)$.

$$\hskip-9cm
\begin{picture}(-100,100)
{  \linethickness{0.055mm}
\put(-80,20){\line(1,0){80}}
\put(2,20){\circle{4}}
\put(0.7,22){\line(-1,1){50}}
}
\put(-44,40){\tiny\mbox{$\kt$}}
\put(30,50){\tiny\mbox{$\kf$}}
{  \linethickness{0.015mm}
\put(4,20){\line(1,0){10}}
\put(24,20){\line(1,0){10}}
\put(44,20){\line(1,0){10}}
\put(-34,20){\line(0,1){34}}
\put(-44,20){\line(0,1){44}}
\put(-54,20){\line(0,1){50}}
\put(-64,20){\line(0,1){50}}
\put(-24,20){\line(0,1){24}}
\put(-14,20){\line(0,1){14}}
\put(-7.5,30){\line(1,0){44}}
\put(-17.5,40){\line(1,0){44}}
\put(-27.5,50){\line(1,0){44}}
\put(-37.5,60){\line(1,0){44}}
\put(-47.5,70){\line(1,0){44}}

}
\put(67,30){\mbox{$\leadsto$}}

{  \linethickness{0.055mm}
\put(101,20){\line(1,0){80}}
\put(183,20){\circle{4}}}
{ \linethickness{5mm}
\put(184.7,18){\line(1,-1){40}}
}
\put(136,40){\tiny\mbox{$\kt$}}
\put(145,-10){\tiny\mbox{$\kf[1]$}}
{  \linethickness{0.015mm}
\put(181,22){\line(-1,1){10}}
\put(168,35){\line(-1,1){10}}
\put(155,48){\line(-1,1){10}}
\put(142,61){\line(-1,1){10}}
\put(124,20){\line(0,1){45}}
\put(134,20){\line(0,1){45}}
\put(144,20){\line(0,1){35}}
\put(154,20){\line(0,1){25}}
\put(164,20){\line(0,1){15}}
\put(222.5,-20){\line(-1,0){44}}
\put(212.5,-10){\line(-1,0){44}}
\put(202.5,0){\line(-1,0){44}}
\put(192.5,10){\line(-1,0){44}}
}
\end{picture}
$$
\smallskip
\vskip1cm

\begin{remark} In \cite{Pol} it is shown that for the various
$\varphi$ the tilts $\ka_\varphi\subset\Db(C)$ for an elliptic curve
$C$ describe the categories of holomorphic vector bundles on non-commutative deformations of $C$.
Roughly, for $C=\CC/(\ZZ+\tau\ZZ)$ and $\theta\in\RR\setminus\QQ$ one considers the algebra $A_\theta$ of series
$\sum a_{m,n}U_1^m\cdot U_2^n$ with rapidly decreasing coefficients and the rule $U_1\cdot U_2=\exp(2\pi i\theta)
U_2\cdot U_1$. A holomorphic vector bundle on the non-commutative $2$-torus $T_{\tau,\theta}$ is 
by definition  an $A_\theta$-module $E$ with a `connection' $\nabla:E\to E$ satisfying the Leibniz rule with respect
to the derivative $\delta$ given by $\delta(U_1^m\cdot U_2^n)=(2\pi i)(m\tau+n)U_1^m\cdot U_2^n$. Then for $\varphi={\rm arcot}(-\theta)$ the category of holomorphic vector bundles on $T_{\tau,\theta}$ is equivalent to $\ka_\varphi$.
\end{remark}

The relation between tilts of  the heart $\ka$ of a t-structure on a triangulated category $\kd$ and new t-structures 
on $\kd$ is clarified by the following result, see \cite[Lem.\ 1.1.2]{Polt}, \cite[Thm.\ 3.1]{BR} or \cite[Prop.\ 2.3]{Wolf1}.

\begin{prop}\label{prop:tiltssand}
If $\ka=\kd^{\leq0}\cap\kd^{\geq0}$ denotes the heart of a t-structure on a triangulated category
$\kd$ given by $\kd^{\leq 0}\subset\kd$. Then there exists a natural bijection between:

i) Torsion theories for $\ka$ and

ii) t-structures on $\kd$ given by $\kd'^{\leq0}\subset\kd$ satisfying $\kd^{\leq0}[1]\subset \kd'^{\leq 0}\subset\kd^{\leq0}$.
\end{prop}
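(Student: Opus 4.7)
The plan is to construct explicit maps in both directions and then verify that they are mutually inverse, the sandwich condition $\kd^{\leq 0}[1]\subset\kd'^{\leq 0}\subset\kd^{\leq 0}$ being exactly what is needed to make everything consistent. I will write $H^i$ and $\tau^{\leq 0},\tau^{\geq 1}$ for the cohomology and truncation functors of the original t-structure, and $\tau'^{\leq 0},\tau'^{\geq 1}$ for those of the new one.

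From a torsion theory $(\kt,\kf)$ for $\ka$ I would define
\[
\kd'^{\leq 0}:=\{E\in\kd\mid H^i(E)=0\text{ for }i>0\text{ and }H^0(E)\in\kt\}.
\]
That this is the aisle of a bounded t-structure whose heart is the tilt $\ka^\sharp$ is the previously stated proposition, obtained by verifying the two conditions in Remark \ref{rem:BrAnn} applied to $\ka^\sharp$. The inclusions $\kd^{\leq 0}[1]=\kd^{\leq -1}\subset\kd'^{\leq 0}\subset\kd^{\leq 0}$ are immediate from the definition, since $\kt\subset\ka$.

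In the other direction, from a t-structure $\kd'^{\leq 0}$ with the sandwich property I would set
\[
\kt:=\kd'^{\leq 0}\cap\ka,\qquad \kf:=\kd'^{\geq 1}\cap\ka.
\]
Orthogonality $\Hom(\kt,\kf)=0$ is automatic from $\kf\subset\kd'^{\geq 1}=(\kd'^{\leq 0})^\perp$. For the short exact sequence, given $E\in\ka$ I take the $\kd'$-truncation triangle $\tau'^{\leq 0}E\to E\to\tau'^{\geq 1}E$. The main point to check is that both outer terms lie in $\ka$. Since $\kd'^{\leq 0}\subset\kd^{\leq 0}$, we have $\tau'^{\leq 0}E\in\kd^{\leq 0}$; since $\kd^{\leq 0}[1]\subset\kd'^{\leq 0}$, taking perpendiculars gives $\kd'^{\geq 1}\subset\kd^{\geq 0}$, so $\tau'^{\geq 1}E\in\kd^{\geq 0}$. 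The remaining vanishings come from the long exact $H^\bullet$-sequence associated to the triangle: since $E\in\kd^{\leq 0}\cap\kd^{\geq 0}$, one reads off $H^{<0}(\tau'^{\leq 0}E)=0$ and $H^{>0}(\tau'^{\geq 1}E)=0$, hence both objects are in $\ka$. The resulting triangle is a short exact sequence in $\ka$ with $\tau'^{\leq 0}E\in\kt$ and $\tau'^{\geq 1}E\in\kf$.

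It remains to check that the two constructions are inverse. Starting from $(\kt,\kf)$, tilting and intersecting the resulting aisle with $\ka$ obviously returns $\kt$, and similarly for $\kf$. For the other composition, given $\kd'^{\leq 0}$ satisfying the sandwich condition, I apply the first construction to $(\kt,\kf):=(\kd'^{\leq 0}\cap\ka,\kd'^{\geq 1}\cap\ka)$ and must show the result equals $\kd'^{\leq 0}$. For $E\in\kd'^{\leq 0}$ the inclusion $\kd'^{\leq 0}\subset\kd^{\leq 0}$ forces $H^{>0}(E)=0$, and the standard truncation triangle $\tau^{\leq -1}E\to E\to H^0(E)$ combined with $\tau^{\leq -1}E\in\kd^{\leq -1}\subset\kd'^{\leq 0}$ gives $H^0(E)\in\kd'^{\leq 0}\cap\ka=\kt$. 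Conversely, if $H^{>0}(E)=0$ and $H^0(E)\in\kt$, the same triangle exhibits $E$ as an extension of $H^0(E)\in\kt\subset\kd'^{\leq 0}$ by $\tau^{\leq -1}E\in\kd^{\leq -1}\subset\kd'^{\leq 0}$, so $E\in\kd'^{\leq 0}$ by extension-closedness of any aisle.

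The only step requiring care is the verification that the $\kd'$-truncation of $E\in\ka$ stays in $\ka$; I expect this to be the main obstacle, and it is precisely here that both halves of the sandwich condition $\kd^{\leq 0}[1]\subset\kd'^{\leq 0}\subset\kd^{\leq 0}$ enter in an essential way.
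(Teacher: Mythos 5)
Your proposal is correct and follows the same route as the paper: the two maps you define, $(\kt,\kf)\mapsto\{E\mid H^{>0}(E)=0,\ H^0(E)\in\kt\}$ and $\kd'^{\leq 0}\mapsto\kd'^{\leq 0}\cap\ka$, are exactly the ones the paper gives, which contents itself with defining them. Your verification that the $\kd'$-truncation of an object of $\ka$ stays in $\ka$ (via the two halves of the sandwich condition and the long exact $H^\bullet$-sequence) and that the two constructions are mutually inverse correctly supplies the details the paper omits.
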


\begin{proof}
We only define the maps: A torsion theory $(\kt,\kf)$ for $\ka$ yields a t-structure 
$\kd'^{\leq0}:=\{E~|~H^i_\ka(E)=0~i>0 \text{~and~}H^0_\ka(E)\in\kt\}$. Here, $H^i_\ka$ denotes the cohomology
with respect to the original t-structure which, therefore, takes values in $\ka$. Conversely, if $\kd'^{\leq0}$ is given,
then one defines a torsion theory by $\kt:=\ka\cap\kd'^{\leq0}$ which can also be written
as $\{E\in\ka~|~E\in H^0_{\ka'}(\kd'^{\leq0})\}$. 
\end{proof}

So roughly, one can tilt until $\kd'^{\leq0}$ leaves the slice $\kd^{\leq-1}\subset\kd^{\leq0}$. The explicit description of torsion theories versus t-structures is later used often in the analysis of the
space of stability conditions.

\section{Stability conditions: Definition and examples}

In this second lecture, slicings and stability conditions are defined on arbitrary triangulated categories.
Section \ref{sect:GL} discusses the natural action of $\widetilde{\rm GL}\!{}^+(2,\RR)$ 
and $\Aut(\kd)$ on the space $\Stab(\kd)$ of stability conditions and
in Section \ref{sect:stabcurves} it is shown that the $\widetilde{\rm GL}\!{}^+(2,\RR)$-action is transitive when $\kd$ is the bounded derived category
$\Db(C)$ of a curve of genus $g(C)>0$. 
\subsection{Slicings} We shall introduce this notion simultaneously for abelian categories $\ka$
and triangulated categories $\kd$. 
It is maybe easier to grasp for abelian categories and a slicing on a triangulated category is in fact nothing but a
bounded t-structure
with a slicing of the abelian category given by its heart (cf.\ Proposition \ref{prop:boundedslicing}). The analogous notions of a torsion theory and a t-structure are both refined by the notion of a
slicing which formalizes the example of the categories $\kp(\phi)\subset \coh(C)$ of semistable sheaves of phase $\phi$ on a curve
(see Definition \ref{dfb:catsstPcurve}).

\begin{definition}\label{def:slic}
A \emph{slicing} $\kp=\{\kp(\phi)\}$ of $\ka$ (resp.\ $\kd$) consists of full additive (not necessarily abelian)
subcategories $\kp(\phi)\subset\ka$, $\phi\in(0,1]$ (resp.\ $\kp(\phi)\subset\kd$, $\phi\in\RR$) such that:

i) $\Hom(\kp(\phi_1),\kp(\phi_2))=0$ if $\phi_1>\phi_2$.

ii) For all $0\ne E\in\ka$ there exists a filtration $0=E_0\subset E_1\subset\ldots\subset E_n=E$ with
$A_i:=E_i/E_{i-1}\in\kp(\phi_i)$ and $\phi_1>\ldots>\phi_n$.
(For $E\in\kd$ the inclusions $E_{i-1}\subset E_i$ are replaced by morphisms
$E_{i-1}\to E_i$ and the quotients $A_i$ are defined as their cones.)

$$
\hskip-3cm
\begin{picture}(-100,100)
{  \linethickness{0.25mm}
\put(-80,20){\line(1,0){80}}

\put(-9,60){\small\mbox{\ldots\ldots}}

\put(2,20){\circle{4}}
\put(0.7,22){\line(-1,1){50}}
\put(3.3,22){\line(1,1){50}}
}
\put(-72,74){\tiny\mbox{$\kp(\phi_1)$}}
\put(55,75){\tiny\mbox{$\kp(\phi_2)$}}
{  \linethickness{0.015mm}
\put(4,20){\line(1,0){10}}
\put(24,20){\line(1,0){10}}
\put(44,20){\line(1,0){10}}
\put(64,20){\line(1,0){10}}
}
\put(0,80){\mbox{$\not\Rightarrow$}}

\end{picture}
$$

\smallskip

iii) This condition is for the triangulated case only:  $\kp(\phi)[1]=\kp(\phi+1)$ for all $\phi\in\RR$.
\end{definition}

As before, we will call $\kp(\phi)$ the subcategory of \emph{semistable} objects of phase $\phi$. 
The minimal objects in $\kp(\phi)$, i.e.\ those without any proper subobject in $\kp(\phi)$, are called
\emph{stable}
of phase $\phi$. The filtration in ii) is the
Harder--Narasimhan filtration, which is again unique due to i), and the $A_i$ are the
semistable factors of $E$. Also, for any interval
$I$ we let $\kp(I)$  be the full subcategory of objects $E$, for which the semistable factors have phases in $I$.
We repeat the warning that the phase $\phi(E)$ of an object is only well defined for semistable $E$.

\begin{remark}\label{rem:useful}
Here are a few useful observations:

i) If $E\to B$ is a non-trivial morphism, then for at least one semistable factor $A_i$ of $E$ there exists
a non-trivial morphism $A_i\to B$.

ii) If $A\in\kp(\phi)$, $B\in\kp(I)$, and $\phi>t$ for all $t\in I$, then $\Hom(A,B)=0$.
	
iii) For the Harder--Narasimhan filtration of $E\in\kd$ all the morphisms $E_i\to E$ are non-trivial and
similarly the projection $E\to A_n$ is non-trivial.

iv) If $E$ is not semistable, i.e.\ not contained in any of the $\kp(\phi)$, then there exists a short exact sequence
$0\to A\to E\to B\to 0$ (resp.\ an exact triangle $A\to E\to B\to A[1]$) with $A,B\ne0$ and $\Hom(A,B)=0$.	
\end{remark}

\begin{ex} 
Consider a torsion theory $(\kt,\kf)$ for an abelian category $\ka$. Pick $1\geq \phi_1>\phi_2>0$ and
let $\kp(\phi_1):=\kt$, $\kp(\phi_2):=\kf$, and $\kp(\phi)=0$ for $\phi\ne\phi_1,\phi_2$. This then defines a slicing of $\ka$. 
In particular, it provides examples of slicings with  slices $\kp(\phi)$ which are not(!) abelian.

 Similarly, if a t-structure on a triangulated category $\kd$ has heart $\ka$, then
for any choice of $\phi_0\in\RR$ one can define a slicing of $\kd$ by $\kp(\phi):=\ka[\phi-\phi_0]$ if $\phi-\phi_0\in\ZZ$
and $\kp(\phi)=0$ otherwise.
\end{ex}

Suppose  $\kp=\{\kp(\phi)\}$ is a slicing of an abelian category $\ka$. Pick  $\phi_0\in(0,1]$. Then  
$\kt:=\kp(>\!\phi_0)$ and $\kf:=\kp(\leq\!\phi_0)$  define a
torsion theory for $\ka$. In the triangulated setting this yields the following
assertion where we choose $\phi_0=0$.	
	
\begin{prop}\label{prop:boundedslicing}
Let $\kd$ be a triangulated category. There is natural bijection between

i) Slicings of $\kd$ and

ii) Bounded t-structures on $\kd$ together with a slicing of their heart.
\end{prop}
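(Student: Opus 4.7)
The plan is to construct explicit inverse maps between the two classes of data and verify the slicing axioms by leaning on Remark \ref{rem:BrAnn}.

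\textbf{Forward direction.} Given a slicing $\kp=\{\kp(\phi)\}_{\phi\in\RR}$ of $\kd$, I would set $\ka:=\kp((0,1])$ (the full subcategory of objects all of whose semistable factors have phases in $(0,1]$) and define the slicing of $\ka$ by restricting $\kp$ to $(0,1]$. To check that $\ka$ is the heart of a bounded t-structure I would use Remark \ref{rem:BrAnn}. Slicing axiom iii) gives $\ka[k]=\kp((k,k+1])$, so for $k_1>k_2$ every phase appearing in $\ka[k_1]$ strictly exceeds every phase in $\ka[k_2]$; slicing axiom i) combined with Remark \ref{rem:useful} yields the Hom-vanishing required in Remark \ref{rem:BrAnn} i). For Remark \ref{rem:BrAnn} ii), I would take the HN filtration of $E\in\kd$ and regroup consecutive factors whose phases lie in a common half-open integer interval $(k,k+1]$; each grouped subfiltration produces an intermediate cone in $\ka[k]$, and the resulting coarsened filtration has exactly the form required.

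\textbf{Backward direction.} Given a bounded t-structure with heart $\ka$ and a slicing $\{\kq(\phi)\}_{\phi\in(0,1]}$ of $\ka$, I would write each $\psi\in\RR$ uniquely as $\psi=\phi+k$ with $\phi\in(0,1]$ and $k\in\ZZ$, and set $\kp(\psi):=\kq(\phi)[k]$. Axiom iii) holds by construction. For axiom i), a case analysis on the integer parts of $\psi_1>\psi_2$ reduces Hom-vanishing either to the slicing of $\ka$ (when the integer parts coincide, forcing $\phi_1>\phi_2$) or to Remark \ref{rem:BrAnn} i) applied to $\ka$ (when $k_1>k_2$, giving a strictly negative shift between objects of $\ka$; the alternative $k_1<k_2$ is ruled out because $\phi_1\leq 1$ and $\phi_2>0$ would then force $\psi_1<\psi_2$). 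For axiom ii), one must combine the two filtrations: apply Remark \ref{rem:BrAnn} ii) to write $E\in\kd$ as an iterated extension of objects $A_i\in\ka[k_i]$ with $k_1>\ldots>k_r$, then refine each $A_i$ using the $\kq$-HN filtration of $A_i[-k_i]\in\ka$ shifted by $k_i$. Strict monotonicity of the combined phases follows since within one block it comes from $\kq$, while across a block boundary the inequality $k_i\geq k_{i+1}+1$ forces phases in $(k_i,k_i+1]$ to strictly exceed those in $(k_{i+1},k_{i+1}+1]$.

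The main obstacle is this final lifting step in the backward direction: one must splice the internal $\kq$-HN filtration of each $A_i\in\ka[k_i]$ into a compatible refinement of the global filtration $0=E_0\to E_1\to\ldots\to E_r=E$. Concretely, for each $i$ and each stage of the HN filtration of $A_i$, I would repeatedly apply the octahedral axiom to produce intermediate objects $F_j$ with $E_{i-1}=F_0\to F_1\to\ldots\to F_{m_i}=E_i$ whose cones realize the desired $\kq$-semistable factors (shifted by $k_i$); this is standard but must be carried out carefully to ensure the cones come out as claimed.

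\textbf{Bijectivity.} Starting from a slicing of $\kd$, restricting to $(0,1]$ tautologically recovers the slicing on the produced heart $\ka=\kp((0,1])$, and re-extending via the backward map recovers $\kp(\phi)$ for all $\phi\in\RR$ by axiom iii). Starting from $(\ka,\kq)$, the heart of the produced t-structure equals $\kp((0,1])$, which coincides with $\ka$ because every object of $\ka$ has a $\kq$-HN filtration with factors in $(0,1]$ and $\ka$ is closed under extensions in $\kd$; and the restriction of $\kp$ to $(0,1]$ equals $\kq$ by the very definition of $\kp$.
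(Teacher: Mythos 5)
Your construction is exactly the one in the paper: take $\ka=\kp(0,1]$ as the heart (equivalently, $\kd^{\leq0}=\kp(>\!0)$ as the aisle) in one direction, and extend a slicing of the heart by $\kp(\phi+k):=\kq(\phi)[k]$ in the other. The paper only records these two maps and leaves the verifications implicit; your regrouping/splicing of Harder--Narasimhan filtrations correctly supplies the omitted details.
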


\begin{proof} Start with a slicing $\{\kp(\phi)\}_{\phi\in\RR}$ of $\kd$.
Then define  a bounded t-structure on $\kd$ by $\kd^{\leq 0}=\kp(>0)$.\footnote{This clash of
notation really is unavoidable.} Its heart is $\ka=\kp(0,1]$ and $\{\kp(\phi)\}_{\phi\in(0,1]}$ defines a slicing of it. 
Conversely, if a slicing $\{\kp(\phi)\}_{\phi\in(0,1]}$ of the heart $\ka$ of a bounded t-structure
on $\kd$ is given, define
a slicing of $\kd$ by $\kp(\phi+k):=\kp(\phi)[k]$, $k\in\ZZ$.
\end{proof}
	
\subsection{Stability conditions}
To make slicings a really useful notion, they have to be linearized by a stability function, which is  the abstract version
of  $Z=-\deg+i\cdot\rk$ on $\Db(C)$. This will lead to the concept of stability conditions. The categories $\kp(\phi)$ of semistable objects
will then automatically be abelian. 

There are two possible approaches to stability conditions. Either one starts with a slicing and searches for a compatible stability function or a natural stability function is given already and one verifies that it also defines a slicing, i.e.\ that 
Harder--Narasimhan filtrations exist.

Once more, we will first deal with the easier notion of stability conditions on abelian categories.
So let $\ka$ be an abelian category. As before, $K(\ka)$ will denote its Grothendieck group,
i.e.\ the abelian group generated by objects of $\ka$ subject to the relation $[E_2]=[E_1]+[E_3]$
for any short exact sequence $0\to E_1\to E_2\to E_3\to0$. 

\emph{Warning:} Despite its easy definition, it is usually very difficult
to control the Grothendieck group $K(\ka)$.

\begin{definition}
A \emph{stability function} on an abelian category $\ka$ is a linear map $Z:K(\ka)\to \CC$ such that $Z(E)=Z([E])\in
\overline\HH=\HH\cup\RR_{<0}$ for any $0\ne E\in\ka$.
\end{definition}

In particular, with respect to a given stability function $Z$ any non-trivial object $E\in\ka$ has a phase  $\phi(E)\in(0,1]$
which is determined by $Z(E)\in\exp(i\pi\phi(E))\cdot\RR_{>0}$.

Moreover, for a short exact sequence $0\to E_1\to E_2\to E_3\to0$ the phases $\phi_i$ of $E_i$, $i=1,2,3$
are related by $Z(E_2)=Z(E_1)+Z(E_3)$ as in:

$$
\hskip-3cm
\begin{picture}(-100,100)
{\linethickness{0.25mm}
\put(-80,20){\line(1,0){80}}}
\put(2,20){\circle{4}}
\put(0.7,22){\vector(-1,1){30}}
{\linethickness{0.0025mm}
\put(3.6,21){\vector(2,1){30}}
\put(2,22.2){\vector(0,1){45}}
\put(2,20){\oval(20,20)[rt]}
{ \linethickness{0.15mm}\put(4,20){\qbezier(17.4,0)(17.3,5.4)(14.9,8.5)}}
\put(0,71){\tiny\mbox{$E_2$}}
\put(5,31){\tiny\mbox{$\phi_2$}}
\put(-36,55){\tiny\mbox{$E_1$}}
\put(35,36){\tiny\mbox{$E_3$}}
\put(21.4,24){\tiny\mbox{$\phi_3$}}
}
{  \linethickness{0.015mm}
\put(4,20){\line(1,0){5}}
\put(14,20){\line(1,0){5}}
\put(24,20){\line(1,0){5}}
\put(34,20){\line(1,0){5}}
\put(44,20){\line(1,0){5}}
\put(54,20){\line(1,0){5}}
\put(64,20){\line(1,0){5}}
}
\end{picture}
$$

As in the case of $\coh(C)$ discussed in Section \ref{sec:Recol}, an object $E\in\ka$ is called \emph{semistable} with respect to a given stability function  $Z$ if it does not contain any subobject of phase
$>\!\phi(E)$. Then, one defines the subcategory $\kp(\phi)$
of semistable objects (with respect to $Z$) of phase $\phi$ as in Definition \ref{dfb:catsstPcurve}.
A stability function is said to satisfy the \emph{Harder--Narasimhan property} if  any object has a finite filtration with semistable quotients $A_i$, i.e.\ if $\{\kp(\phi)\}_{\phi\in(0,1]}$ is a slicing of $\ka$.

Conversely, if a slicing $\{\kp(\phi)\}_{\phi\in(0,1]}$ is given, then a stability function $Z$
is \emph{compatible} with it  if $Z(E)\in\exp(i\pi\phi)\cdot\RR_{>0}$ for all $0\ne E\in\kp(\phi)$.

\begin{definition}
A \emph{stability condition} on an abelian category $\ka$ is  a pair $\sigma=(\kp,Z)$ consisting 
of a slicing $\kp$  and a compatible stability function $Z$.\footnote{Later, a finiteness condition will be added, see Definition
\ref{dfn:locfinite}, which in particular implies that any semistable object  has a finite Jordan--H\"older filtration.}
\end{definition}

\begin{remark}
The datum of a stability condition is thus equivalent to the datum of a stability function $Z:K(\ka)\to \CC$ satisfying the Harder--Narasimhan property. 
\end{remark}

\begin{cor}
If $\sigma=(\kp,Z)$ is a stability condition, then all categories
$\kp(\phi)$ are abelian.
\end{cor}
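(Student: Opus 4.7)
The plan is to show that for every morphism $f\colon E\to F$ between objects of $\kp(\phi)$, the kernel $K$, image $I$ and cokernel $C$ formed in the ambient abelian category $\ka$ all lie in $\kp(\phi)$. Once this closure under $\ka$-kernels and $\ka$-cokernels is established, the full additive subcategory $\kp(\phi)\subset\ka$ inherits an abelian structure directly: monomorphisms and epimorphisms in $\kp(\phi)$ are automatically so in $\ka$ (by fullness together with the assumed closure), so the $\ka$-factorization $E\twoheadrightarrow I\hookrightarrow F$ provides the image in $\kp(\phi)$, and the coincidence of image and coimage is inherited from $\ka$.

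The key input is a see-saw property of phases. For a short exact sequence $0\to A\to B\to C\to 0$ in $\ka$ with all three terms non-zero, additivity $Z(B)=Z(A)+Z(C)$ combined with $Z(A),Z(B),Z(C)\in\overline\HH$ forces $\phi(A)\leq\phi(B)\leq\phi(C)$ or the reverse chain of inequalities; in particular, if two of the three phases coincide, so does the third. As an immediate consequence, $E$ is semistable of phase $\phi$ if and only if every non-zero quotient of $E$ has phase $\geq\phi$, the dual of the defining condition on subobjects.

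Now apply this to $f\colon E\to F$ with $E,F\in\kp(\phi)$. The image $I$ is a subobject of $F$, so $\phi(I)\leq\phi$, and a quotient of $E$, so by the dual characterization $\phi(I)\geq\phi$; hence $\phi(I)=\phi$, and since every subobject of $I$ is a subobject of $F$, the object $I$ is semistable of phase $\phi$. Applying additivity to $0\to K\to E\to I\to 0$ with $\phi(E)=\phi(I)=\phi$ puts $Z(K)$ on the line $\RR\cdot e^{i\pi\phi}$; the constraint $Z(K)\in\overline\HH$ for $K\neq 0$ then restricts $Z(K)$ to the positive ray, forcing $\phi(K)=\phi$, and the semistability of $K$ follows since its subobjects are subobjects of $E$. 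A symmetric argument applied to $0\to I\to F\to C\to 0$ places $C$ in $\kp(\phi)$. The one point deserving care is excluding the ray $\RR_{<0}\cdot e^{i\pi\phi}$ for $Z(K)$, but this ray lies outside $\overline\HH=\HH\cup\RR_{<0}$ for every $\phi\in(0,1]$, so this is routine.
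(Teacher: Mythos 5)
Your argument is correct, and it rests on the same mechanism as the paper's proof (which follows \cite[Lem.\ 5.2]{BrAnn}): additivity of $Z$ on short exact sequences together with the constraint $Z(\,\cdot\,)\in\overline\HH$ forces the kernel and image of a morphism in $\kp(\phi)$ to have phase exactly $\phi$, after which semistability is inherited from $E$ and $F$. The one genuine difference is organizational. The paper first treats the case where $\ker(f)$ and ${\rm Im}(f)$ are themselves semistable and then reduces the general case to this one via their Harder--Narasimhan filtrations; you bypass that reduction entirely by exploiting that every non-zero object of the heart has a well-defined phase and that phases satisfy the see-saw property on short exact sequences. This buys a shorter, self-contained argument, and you also make explicit two things the paper leaves implicit: the cokernel, and the formal step from closure under $\ka$-kernels and $\ka$-cokernels to abelianness of the full subcategory. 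Your attention to excluding the ray $\RR_{<0}\cdot e^{i\pi\phi}$ for $Z(\ker(f))$, including the endpoint $\phi=1$ where this ray becomes $\RR_{>0}$, is exactly the point where the convention $\overline\HH=\HH\cup\RR_{<0}$ (rather than the closed upper half-plane) is doing work, and you handle it correctly.
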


\begin{proof} See \cite[Lem.\ 5.2]{BrAnn}.
In order to make $\kp(\phi)$ abelian, one has to show that kernels, images, etc., exist. Since $\kp(\phi)$ is a
subcategory of the given abelian category $\ka$, one can consider  a morphism $f:E\to F$ in $\kp(\phi)$
as a morphism in $\ka$ and take its kernel there. In order to conclude, one has to show that $\ker(f)$ is in fact
an object in $\kp(\phi)$ (and similarly for image, cokernels, etc.). Suppose first that
$\ker(f)$ and ${\rm Im}(f)$  are semistable, i.e.\ $\ker(f)\in\kp(\psi)$ and ${\rm Im}(f)\in\kp(\psi')$  for certain $\psi,\psi'$.
Since $\ker(f)\subset E$ and ${\rm Im}(f)\subset F$, semistability of $E$ resp.\ $F$ yields $\psi\leq \phi$
and $\psi'\leq\phi$. Thus, the images of $\ker(f)$ and ${\rm Im}(f)$ under $Z$ are  to the right of 
the ray $Z(E)\cdot\RR_{>0}= Z(F)\cdot\RR_{>0}$ as pictured below.

$$
\hskip-3cm
\begin{picture}(-100,100)
{\linethickness{0.25mm}
\put(-100,20){\line(1,0){100}}}
\put(2,20){\circle{4}}
\put(0.7,22){\line(-1,1){50}}
\put(-28,38){\tiny\mbox{$E$}}
\put(-19.2,42){\circle*{2}}
\put(-49,60){\tiny\mbox{$F$}}
\put(-39.2,62){\circle*{2}}
\put(-62,75){\tiny\mbox{$\kp(\phi)$}}
{\linethickness{0.0025mm}
\put(3.6,21){\line(2,1){35}}
\put(2.9,22.2){\line(1,2){25}}
\put(33,30){\tiny\mbox{${\rm Im}(f)$}}
\put(30.2,34.3){\circle*{2}}
\put(25,60){\tiny\mbox{${\rm Ker}(f)$}}
\put(22.6,62){\circle*{2}}

}
{  \linethickness{0.015mm}
\put(4,20){\line(1,0){5}}
\put(14,20){\line(1,0){5}}
\put(24,20){\line(1,0){5}}
\put(34,20){\line(1,0){5}}
\put(44,20){\line(1,0){5}}
\put(54,20){\line(1,0){5}}
\put(64,20){\line(1,0){5}}
}
\put(80,60){\mbox{$\lightning$}}
\end{picture}
$$

\smallskip

But this contradicts the linearity of $Z$ which says $Z(E)=Z(\ker(f))+Z({\rm Im}(f))$.
For the general case, one uses the Harder--Narasimhan filtration of $\ker(f)$ and ${\rm Im}(f)$ and again the linearity
of $Z$ to reduce to the case above.
\end{proof}

Let us now pass to the case of a triangulated category $\kd$. Its Grothendieck group
$K(\kd)$ is defined to be the abelian group generated by objects $E$ in $\kd$ subject to the 
obvious relation
defined by exact triangles. Note that with this definition one has 
 $K(\kd)\cong K(\ka)$, whenever $\ka$ is the heart of bounded t-structure on $\kd$. 
 
 \emph{Warning:} For triangulated
 subcategories $\kd'\subset\kd$ the induced map $K(\kd')\to K(\kd)$ is in general not injective.\footnote{For dense subcategories it is, see \cite{TT}.}
 
 \begin{definition}
 A \emph{stability function} on the triangulated category $\kd$ is a linear map
 $Z:K(\kd)\to\CC$.
 \end{definition}
Note that we do not make any assumption on the image of $Z$. Indeed, the linearity of $Z$ in particular implies
 that $Z(E[1])=-Z(E)$, as $[E[1]]=-[E]$ in $K(\kd)$. Also note that there are always non-trivial objects in the kernel of $Z$. Indeed
  $Z(E\oplus E[1])=0$ for all $E$.
 
As before, we say that $Z$ is compatible with a slicing $\{\kp(\phi)\}_{\phi\in\RR}$ of $\kd$ if $Z(E)\in\exp(i\pi\phi)\cdot
\RR_{>0}$
for all $0\ne E\in\kp(\phi)$. 

\emph{Warning:} In contrast to the abelian case, the phase of an arbitrary object
$E\in\kd$ is not well defined (not even modulo $\ZZ$).

\begin{definition}
A \emph{stability condition} on a triangulated category $\kd$  is a pair $\sigma=(\kp,Z)$
consisting of a slicing $\kp=\{\kp(\phi)\}_{\phi\in\RR}$ of $\kd$
and a compa\-tible stability function $Z:K(\kd)\to \CC$.\footnote{Later, a finiteness condition will again be added, see Definition
\ref{dfn:locfinite}.}
\end{definition}

In particular, the \emph{space of all stability conditions} $${\rm Stab}(\kd):=\{\sigma=(\kp,Z)\}$$ comes
with a natural map $$\pi:{\rm Stab}(\kd)\to K(\kd)^*:=\Hom(K(\kd),\CC), ~~\sigma=(\kp,Z)\mapsto Z.$$

\begin{ex}\label{ex:patho}
Here is a pathological example (cf.\ \cite[Ex.\ 5.6]{BrAnn}) that we will want to avoid by adding local finiteness
(see Definition \ref{dfn:locfinite}) later on.
Consider a curve $C$, $\varphi\in\RR\setminus\QQ$ and the induced torsion theory as in Section \ref{sec:tilts}
with its tilt $\ka_\varphi\subset\Db(C)$. Define $Z:K(\kd)=K(\ka_\varphi)\to \CC$ as $Z(E)=i\cdot(\deg(E)-\varphi\cdot\rk(E))$.
Then $Z(E)\in i\cdot\RR_{>0}$ for all $0\ne E\in\ka_\varphi$. In particular, all objects of $\ka_\varphi$ are semistable
of phase $\phi=\frac{1}{2}$, i.e.\ $\kp(\frac{1}{2})=\ka_\varphi$ and $\kp(\phi)=\{0\}$ for $\phi\ne\frac{1}{2}$.
In this example, semistable objects do not necessarily admit finite Jordan--H\"older filtrations. In fact, for  any
stable vector bundle $E$ of slope $<\varphi$ one obtains a strictly descending filtration
$\ldots E_i[1]\subset E_{i-1}[1]\subset\ldots E_0[1]=E[1]$ in $\kp(\frac{1}{2})$,
where $E_i$ is defined as the kernel of some arbitrarily chosen
surjection $E_{i-1}\twoheadrightarrow k(x_i)$.
\end{ex}
Building upon Proposition \ref{prop:boundedslicing}, one obtains (cf.\ \cite[Prop.\ 5.3]{BrAnn}):
\begin{prop} Let  $\kd$ be a triangulated category. There is natural bijection between

i) Stability conditions on $\kd$ and

ii) Bounded t-structures on $\kd$ together with a stability condition on  the heart.\qqed
\end{prop}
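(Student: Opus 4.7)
The plan is to build directly on Proposition \ref{prop:boundedslicing}, which already supplies the bijection between slicings of $\kd$ and pairs consisting of a bounded t-structure on $\kd$ together with a slicing of its heart. What remains is to show that, under this correspondence, compatible stability functions on the two sides match up; everything else is formal transport along the stated identification $K(\kd) \iso K(\ka)$ for $\ka$ the heart of a bounded t-structure on $\kd$.

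Starting from a stability condition $\sigma = (\kp, Z)$ on $\kd$, I would first invoke Proposition \ref{prop:boundedslicing} to extract the bounded t-structure with heart $\ka := \kp((0,1])$ and the induced slicing $\{\kp(\phi)\}_{\phi\in(0,1]}$ of $\ka$. The linear map $Z\colon K(\kd)\to\CC$ restricts, via $K(\ka)\iso K(\kd)$, to a linear map $Z_\ka\colon K(\ka)\to\CC$. The compatibility axiom for $\sigma$ asserts $Z(E)\in\exp(i\pi\phi)\RR_{>0}$ whenever $0\ne E\in\kp(\phi)$; for $\phi\in(0,1]$ this ray lies in $\overline{\HH}$, so $Z_\ka$ is a stability function on $\ka$. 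Because every non-zero object of $\ka$ admits an HN filtration with semistable factors in $\kp(\phi)$, $\phi\in(0,1]$ (the slicing axiom for $\kp$ restricted to $\ka$), the HN property on $\ka$ holds automatically, hence $(Z_\ka, \{\kp(\phi)\}_{(0,1]})$ is a stability condition on $\ka$.

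Conversely, from a bounded t-structure with heart $\ka$ and a stability condition $(\kq, Z_\ka)$ on $\ka$, Proposition \ref{prop:boundedslicing} yields a slicing $\kp$ of $\kd$ via $\kp(\phi+k):=\kq(\phi)[k]$ for $\phi\in(0,1]$, $k\in\ZZ$. The stability function $Z_\ka\colon K(\ka)\to\CC$ transports uniquely to $Z\colon K(\kd)\to\CC$ through the same isomorphism. To check compatibility with the extended slicing, take $E\in\kp(\phi+k)=\kq(\phi)[k]$ and write $E=F[k]$ with $F\in\kq(\phi)$; since $[E[1]]=-[E]$ in the Grothendieck group, $[E]=(-1)^k[F]$, so
$$Z(E)=(-1)^k Z_\ka(F)\in(-1)^k\exp(i\pi\phi)\RR_{>0}=\exp(i\pi(\phi+k))\RR_{>0},$$
as required.

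The two constructions are mutually inverse: on the slicing part this is the content of Proposition \ref{prop:boundedslicing}, and on the stability-function part it is tautological, because the isomorphism $K(\kd)\iso K(\ka)$ is canonical and the restriction/extension of $Z$ along it are inverse to one another. The only real bookkeeping point, and the place where one has to pay attention rather than just quote, is the $(-1)^k$ sign in extending $Z_\ka$ to shifts; this is precisely what ensures that compatibility of the phase on $\ka$ propagates consistently to compatibility of the phase on all of $\kd$. I do not expect a genuine obstacle beyond this.
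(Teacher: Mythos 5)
Your argument is correct and is exactly the intended one: the paper itself gives no proof, merely remarking that the statement is obtained by ``building upon'' Proposition \ref{prop:boundedslicing} and citing Bridgeland, and your transport of $Z$ along $K(\kd)\cong K(\ka)$ with the $(-1)^k=\exp(i\pi k)$ sign check is precisely that construction. The only point you elide is that $Z_\ka(E)\in\overline\HH$ must hold for \emph{every} nonzero $E\in\ka$, not just semistable ones; this follows in one line from additivity of $Z$ over the Harder--Narasimhan factors (whose phases lie in $(0,1]$) together with the fact that $\overline\HH=\HH\cup\RR_{<0}$ is closed under addition.
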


By $\ka_\sigma$ we shall denote the heart of a stability condition $\sigma$, so $\ka_\sigma=\kp(0,1]$.

It may be instructive to  picture the image of the hearts of the various t-structures associated with the slicing underlying a  stability function on $\kd$.
\vskip-1cm
$$\hskip-9cm
\begin{picture}(-100,100)
 {\linethickness{0.005mm}
\put(-60,24){\line(1,1){40}}
\put(-40,24){\line(1,1){40}}
\put(-20,24){\line(1,1){40}}
\put(0,24){\line(1,1){40}}
\put(20,24){\line(1,1){40}}
\put(40,24){\line(1,1){40}}
 }
{  \linethickness{0.25mm}
\put(-60,20){\line(1,0){60}}
\put(2,20){\circle{4}}}
{  \linethickness{0.015mm}
\put(4,20){\line(1,0){5}}
\put(14,20){\line(1,0){5}}
\put(24,20){\line(1,0){5}}
\put(34,20){\line(1,0){5}}
\put(44,20){\line(1,0){5}}
\put(54,20){\line(1,0){5}}
\put(64,20){\line(1,0){5}}
}
\put(-50,0){\tiny\mbox{$Z(\kp(0,1])=Z(\kp(2k,2k+1])$}}
\end{picture}
$$
\vskip-4.8cm
$$\hskip2.5cm
\begin{picture}(-100,100)
 {\linethickness{0.005mm}
\put(-2,44){\line(-1,1){30}}
\put(-2,24){\line(-1,1){30}}
\put(-2,4){\line(-1,1){30}}
\put(-2,-16){\line(-1,1){30}}

 }
{  \linethickness{0.25mm}
\put(2,-22){\line(0,1){40}}
\put(2,20){\circle{4}}}
{  \linethickness{0.015mm}
\put(2,22){\line(0,1){5}}
\put(2,32){\line(0,1){5}}
\put(2,42){\line(0,1){5}}
\put(2,52){\line(0,1){5}}
\put(2,62){\line(0,1){5}}
}
\put(15,20){\small\mbox{\tiny$Z(\kp(\frac{1}{2},\frac{3}{2}])$}}
\end{picture}
$$
\vskip2cm

\begin{remark}
To avoid the often mysterious $K(\kd)$, one usually fixes a finite rank quotient $K(\kd)\twoheadrightarrow \Gamma\cong\ZZ^d$
and requires that $Z$ factors via $\Gamma$, i.e.\ $Z:K(\kd)\twoheadrightarrow\Gamma\to\CC$. A typical example
is that of a $k$-linear triangulated category $\kd$ with finite-dimensional $\bigoplus_i\Hom(E,F[i])$ for all $E,F\in\kd$.
Then one can consider the \emph{numerical Grothendieck group}
 $$N(\kd)=\Gamma=K(\kd)/_\sim,$$ where $E\sim 0$ if $\chi(E,F):=\sum(-1)^i\dim\Hom(E,F[i])=0$ for
all $F$.  Stability conditions of this type will be called \emph{numerical}.

In the geometric situation, i.e.\ when  $\kd=\Db(X)$ with $X$  a smooth complex projective variety,
the numerical Grothendieck group $N(\Db(X))\otimes \QQ$ should
be, according to one of the standard conjectures,
isomorphic to the algebraic cohomology $H^*_{\rm alg}(X,\QQ)$ (and according to the Hodge conjecture in fact to
$\bigoplus H^{p,p}(X)\cap H^*(X,\QQ)$).
\end{remark}
\subsection{$\Aut(\kd)$-action and $\widetilde{\rm GL}\!{}^+(2,\RR)$-action}\label{sect:GL} We next come to a feature that is not present in the case of an abelian category.
Let us start with the natural right action 
$$\CC\times {\rm GL}(2,\RR)\to \CC$$
given by $(z,M)\mapsto M^{-1}\cdot z$ via the natural identification $\CC\cong\RR^2$. This yields
$$K(\kd)^*\times{\rm GL}(2,\RR)\to K(\kd)^*, ~~(Z,M)\mapsto M^{-1}\circ Z.$$

Since the image of the heart $\kp(0,1]$ of a stability condition on $\kd$ is contained in $\overline\HH$, it might be helpful
to picture the image of $\overline\HH$ under some standard elements of ${\rm GL}(2,\RR)$. E.g.\ $-{\rm id}=\left(\begin{matrix}-1&0\\
0&-1\end{matrix}\right)$ acts as rotation by $\pi$ (anti-clockwise)
%
%
and $-i=\left(\begin{matrix}0&-1\\1&0\end{matrix}\right)$ as rotation by $\frac{\pi}{2}$ clockwise.
%

 We would like to lift the action of ${\rm GL}(2,\RR)$ on $K(\kd)^*$ to an action on ${\rm Stab}(\kd)$ 
under the natural projection $\pi:{\rm Stab}(\kd)\to K(\kd)^*$. Due to the phases of semistable
objects in $\kd$ being contained in $\RR$ and not only in $(0,1]$, one has to pass
to the universal cover of ${\rm GL}(2,\RR)$ first. In fact, the same problem occurs when 
one wants to lift the action $\CC\times {\rm GL}(2,\RR)\to \CC$ with respect to the exponential map $\exp:\CC\to\CC$.
For stability conditions there is
one more aspect to be taken into account. As one wants the property $\Hom(\kp(\phi_1),\kp(\phi_2))=0$
for $\phi_1>\phi_2$, to be preserved under the action, one needs to restrict to the connected component
${\rm GL}\!{}^+(2,\RR)$ of invertible matrices with positive determinant.

\begin{lem} The universal cover of ${\rm GL}\!{}^+(2,\RR)$ can  be described explicitly
as the group $$\widetilde{\rm GL}\!{}^+(2,\RR)=\{(M,f)~|~M\in{\rm GL}\!{}^+(2,\RR),~ f:\RR\to \RR,~i),~ii) \}$$
with:

 i) $f$ is increasing with $f(\phi+1)=f(\phi)+1$ for all $\phi\in\RR$ and

ii) $M\cdot \exp(i\pi\phi)\in \exp(i\pi f(\phi))\cdot\RR_{>0}$.
\end{lem}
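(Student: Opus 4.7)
The plan is to verify, in order: (a) $\widetilde{\rm GL}\!{}^+(2,\RR)$ carries a natural group law; (b) the projection $p:(M,f)\mapsto M$ is a surjective homomorphism with kernel $\ZZ$; and (c) in its natural topology, it becomes a connected covering space of ${\rm GL}\!{}^+(2,\RR)$. Since ${\rm GL}\!{}^+(2,\RR)$ deformation retracts to ${\rm SO}(2)\iso S^1$ and hence has $\pi_1\iso\ZZ$, a connected $\ZZ$-covering is automatically the universal cover.

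For (a) I would define $(M_1,f_1)\cdot(M_2,f_2):=(M_1M_2,\,f_1\comp f_2)$. Condition (i) is immediate: $f_1\comp f_2$ is increasing and $(f_1\comp f_2)(\phi+1)=f_1(f_2(\phi)+1)=f_1(f_2(\phi))+1$. For (ii), if $M_j\exp(i\pi\phi)=r_j\exp(i\pi f_j(\phi))$ with $r_j>0$, then $M_1M_2\exp(i\pi\phi)=r_1r_2\exp(i\pi f_1(f_2(\phi)))$. Associativity is inherited, $(\id,\id_\RR)$ is the identity, and $(M^{-1},f^{-1})$ inverts $(M,f)$ once one observes that (ii) forces $\exp(i\pi f(\phi))=M\exp(i\pi\phi)/|M\exp(i\pi\phi)|$ to be continuous in $\phi$. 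Combined with $f$ increasing and $f(\phi+1)=f(\phi)+1$, this quickly upgrades $f$ to a strictly increasing continuous homeomorphism of $\RR$.

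For (b), given $M\in{\rm GL}\!{}^+(2,\RR)$, orientation-preservation of the action of $M$ on $\CC\setminus\{0\}$ lets me lift the induced self-map of the unit circle to a continuous strictly increasing $\tilde f:\RR\to\RR$ satisfying (ii). Using $\exp(i\pi(\phi+1))=-\exp(i\pi\phi)$, condition (ii) forces the continuous function $\phi\mapsto\tilde f(\phi+1)-\tilde f(\phi)$ to take odd integer values, so it is a constant odd integer. At $M=\id$ (with $f=\id_\RR$) this constant equals $1$, and path-connectedness of ${\rm GL}\!{}^+(2,\RR)$ propagates the value $1$ to every $M$, proving (i). The kernel is then direct: $M=\id$ combined with (ii) forces $f(\phi)\equiv\phi\pmod 2$, and (i) then yields $f(\phi)=\phi+2k$ for some $k\in\ZZ$.

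For (c) I would give $\widetilde{\rm GL}\!{}^+(2,\RR)$ the subspace topology inside ${\rm GL}\!{}^+(2,\RR)\times C^0(\RR,\RR)$ with the compact-open topology. Locally around any $M_0$ one can continuously choose the canonical lift $f_M$ by the formula from (ii), and any two admissible lifts differ by a locally constant element of $\ZZ$, so $p$ is a $\ZZ$-covering. Connectedness of the total space follows by path-lifting: any $(M,f)$ is joined to $(\id,\id_\RR)$ by lifting a path $M_t$ from $\id$ to $M$, with the constant-odd-integer argument keeping the lift inside $\widetilde{\rm GL}\!{}^+(2,\RR)$. The main obstacle is the mild point-set bookkeeping to verify that $f$ is automatically continuous and strictly increasing and that the topology truly yields a covering map; both are direct consequences of (ii) pinning down $f$ modulo $2$ and of the connectedness of ${\rm GL}\!{}^+(2,\RR)$.
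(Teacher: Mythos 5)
The paper does not prove this lemma at all: it is stated as a standard fact, followed only by the remark that condition ii) says the induced maps $\bar M$ and $\bar f$ on $S^1=(\RR^2\setminus\{0\})/\RR_{>0}=\RR/2\ZZ$ coincide. So there is no argument in the text to compare yours against; what you have written is a genuine proof of an assertion the author leaves to the reader, and it follows the standard route (the pair $(M,f)$ is exactly ``an element of ${\rm GL}\!{}^+(2,\RR)$ together with a lift to $\RR$ of the induced circle map''). Your verifications are correct: the group law $(M_1M_2,f_1\comp f_2)$ is the right one (and is consistent with the right action on ${\rm Stab}(\kd)$ used in the paper); the upgrade of $f$ from ``increasing'' to a homeomorphism is forced by ii) pinning down $f$ modulo $2$ together with the bound $f(\phi+1)=f(\phi)+1$ excluding jumps; surjectivity and the computation of the kernel as $\{(\id,\phi\mapsto\phi+2k)\}\cong\ZZ$ are right; and the identification of a connected covering with infinite fibre of a space with $\pi_1\cong\ZZ$ as the universal cover is legitimate. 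For the constant $\tilde f(\phi+1)-\tilde f(\phi)=1$ you could shortcut the path-connectedness argument by noting this constant is the degree of the induced circle map, which is $+1$ for any orientation-preserving homeomorphism; either way works.

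One step is stated too quickly: connectedness of the total space. Lifting a path $M_t$ from $\id$ to $M$ starting at $(\id,\id_\RR)$ lands at $(M,f+2k)$ for \emph{some} $k\in\ZZ$, not necessarily at the given $(M,f)$; to conclude you also need the monodromy of $\pi_1({\rm GL}\!{}^+(2,\RR))$ to act transitively on the fibre $\ZZ$. This is immediate — the generating loop of rotations $R_t$ by angle $2\pi t$, $t\in[0,1]$, lifts to $(R_t,\phi\mapsto\phi+2t)$ and so joins $(\id,\id_\RR)$ to $(\id,\phi\mapsto\phi+2)$ — but it should be said, since without it one has only shown that the covering restricted to each connected component is the universal cover. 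With that sentence added, your proof is complete.
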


Condition ii) simply says that 
the induced maps $\bar M$ and $\bar f$ on $(\RR^2\setminus\{0\})/\RR_{>0}=S^1=\RR/2\ZZ$ coincide.

Then the action of ${\rm GL}\!{}^+(2,\RR)$ on $\CC$ is lifted naturally to
$$\xymatrix{{\rm Stab}(\kd)\times\widetilde{\rm GL}\!{}^+(2,\RR)\ar[r]&{\rm Stab}(\kd)}$$
\vskip-0.5cm
$$\xymatrix{(\sigma=(\kp,Z),(M,f))\ar@{|->}[r]&\sigma'=(\kp',Z')}$$
with $Z'=M^{-1}\circ Z$ and $\kp'(\phi)=\kp(f(\phi))$.

For example, ${\rm id}$ can be lifted to $({\rm id}, f:\mapsto\phi+2k)\in\widetilde{\rm GL}\!{}^+(2,\RR)$
with arbitrary $k\in\ZZ$. It acts on ${\rm Stab}(\kd)$ by fixing the stability function and by changing a given slicing
$\{\kp(\phi)\}_{\phi\in\RR}$ to $\{\kp'(\phi)=\kp(\phi+2k)\}_{\phi\in\RR}$. Thus, in terms of its action on ${\rm Stab}(\kd)$
the universal cover of ${\rm GL}\!{}^+(2,\RR)$
can be seen as an extension
$$\xymatrix{0\ar[r]&\ZZ[2]\ar[r]&\widetilde{\rm GL}\!{}^+(2,\RR)\ar[r]&{\rm GL}\!{}^+(2,\RR)\ar[r]&0.}$$

Similarly,
$-{\rm id}$  can be lifted to $(-{\rm id},f:\phi\mapsto\phi+2k+1)\in\widetilde{\rm GL}\!{}^+(2,\RR)$
with arbitrary $k\in\ZZ$. It  sends a slicing $\kp$ to $\{\kp'(\phi)=\kp(\phi+2k+1)\}$.

If one thinks of a stability condition as a way of decomposing the triangulated category in small slices, then stability conditions
in the same $\widetilde{\rm GL}\!{}^+(2,\RR)$-orbit decompose the category essentially
in the same way. In particular, the action rotates the slices around without changing the stability of an object.
Compare the comments in Section \ref{sec:tilts}. Thus, if one wants to really understand the different possibilities
for stability conditions on $\kd$ it is rather the quotient $$\raisebox{1mm}{${\rm Stab}(\kd)$}/
\raisebox{-1mm}{$\widetilde{\rm GL}\!{}^+(2,\RR)$}$$
one needs to describe.

Any linear exact autoequivalence $\Phi$ of $\kd$, i.e.\ $\Phi\in\Aut(\kd)$, induces an action on the
Grothendieck group  $K(\kd)$ by $[E]\mapsto [\Phi(E)]$ which we shall also denote $\Phi$.
Then $\Aut(\kd)$ acts on ${\rm Stab}(\kd)$  via
$$\xymatrix{\Aut(\kd)\times{\rm Stab}(\kd)\ar[r]&{\rm Stab}(\kd)}$$
\vskip-0.6cm
$$\xymatrix{(\Phi,\sigma=(\kp,Z))\ar@{|->}[r]&(\kp',Z\circ\Phi^{-1}),}$$
with $\kp'(\phi)=\Phi(\kp(\phi))$. The left $\Aut(\kd)$-action
and the right $\widetilde{\rm GL}\!{}^+(2,\RR)$-action obviously commute. 
\subsection{Stability conditions on curves}\label{sect:stabcurves}
We come back to the derived category $\Db(C)$ of complexes of coherent sheaves on a smooth projective
curve over an algebraically closed field $k$. It is possible to describe the space of numeri\-cal stability condition
completely, at least for $g(C)>0$.\footnote{There are, of course, easier examples one could look at. E.g.\ stability conditions on the abelian category of finite dimensional vector spaces and on its bounded derived category, 
the abelian category of Hodge structures, etc.
}

We have tried to motivate the abstract  notion of a stability condition by the example on $\Db(C)$ with
$Z=-\deg+i\cdot\rk$ and the standard bounded t-structure on $\Db(C)$ with heart $\coh(C)$. The Harder--Narasimhan proper\-ty  holds for $Z$ and the additional finiteness condition we have alluded to  is essentially the existence of finite Jordan--H\"older filtration. Both are ultimately  
explained by the rationality of $-\deg+i\cdot \rk$ (see Remark \ref{rem:discZ}).

The following result,  due to Bridgeland \cite{BrAnn} and Macr\`i \cite{Ma}, shows that 
up to the action of $\widetilde{\rm GL}\!{}^+(2,\RR)$ there is only one stability condition on $\Db(X)$, namely 
the one with the classical choice $Z=-\deg+i\cdot \rk$ and the standard t-structure on $\Db(C)$.

\begin{thm} The space of numerical stability conditions ${\rm Stab}(C):={\rm Stab}(\Db(C))$ on $\Db(C)=\Db(\coh(C))$
of a smooth projective curve
$C$ of genus $g(C)>0$ over a field $k=\bar k$ consists of exactly one $\widetilde{\rm GL}\!{}^+(2,\RR)$-orbit:
$$\raisebox{1mm}{${\rm Stab}(C)$}/\raisebox{-1mm}{$\widetilde{\rm GL}\!{}^+(2,\RR)$}=\{{\rm pt}\}.$$
\end{thm}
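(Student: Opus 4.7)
Given $\sigma=(\kp,Z)\in\Stab(C)$, the strategy is to exhibit an element of $\widetilde{\rm GL}\!{}^+(2,\RR)$ carrying $\sigma$ to the standard $\sigma_0=(\kp_0,Z_0)$ with $Z_0=-\deg+i\cdot\rk$ and heart $\coh(C)$. The proof proceeds in three stages: show that all skyscrapers $k(x)$ are $\sigma$-stable with a common phase; normalise so that this phase is $1$ and $Z(k(x))=-1$; and then identify the heart of the normalised $\sigma$ with $\coh(C)$.

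\emph{Stage 1: skyscrapers are $\sigma$-stable of a common phase.} Since all $k(x)$ share the same class in $N(\Db(C))$, they have the same $Z$-value and, if semistable, the same $\sigma$-phase. The essential inputs for semistability are $\End(k(x))=k$ with $\Ext^1(k(x),k(x))=k$, the orthogonality $\Hom^i(k(x),k(y))=0$ for $x\ne y$ and all $i$, and the absence of spherical/exceptional objects when $g(C)>0$. If $k(x)$ were not semistable, Remark \ref{rem:useful}(iv) would give a triangle $A\to k(x)\to B$ with $A,B\ne 0$ and $\Hom(A,B)=0$; combining this with the orthogonality against all $k(y)$ and the local finiteness of $\sigma$, one derives a contradiction \`a la Macr\`i. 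Stability (not merely semistability) then follows from the simplicity of $k(x)$ as an object of $\coh(C)$ together with the finiteness of the Jordan--H\"older filtration.

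\emph{Stage 2: normalisation and identification of the heart.} Apply an element of $\widetilde{\rm GL}\!{}^+(2,\RR)$ to arrange $Z(k(x))=-1$ and $k(x)\in\kp(1)$. For any line bundle $L$ and any point $x$, the short exact sequence $0\to L(-x)\to L\to k(x)\to 0$ gives $\Hom(L,k(x))\ne 0$, while Serre duality plus a local computation on the smooth curve gives $\Ext^1(k(x),L)\iso k$. Using Remark \ref{rem:useful}(i) (and its evident dual for the target) together with Exercise \ref{rem:GP}(i), these non-vanishings force every HN factor of $L$ to have phase in $(0,1]$, whence $L\in\ka_\sigma=\kp(0,1]$. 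Torsion sheaves lie in $\kp(1)\subset\ka_\sigma$ as iterated extensions of $k(x)$'s; every coherent sheaf on $C$ is an extension of its torsion subsheaf by a locally free quotient, which is itself filtered by line bundles, so $\coh(C)\subset\ka_\sigma$. Two hearts of bounded t-structures on $\Db(C)$ one of which is contained in the other must coincide, whence $\ka_\sigma=\coh(C)$. Finally, the numerical $Z$ is determined on $N(\Db(C))\iso\ZZ^2$ by $Z([k(x)])=-1$ and $Z([\ko_C])\in\HH$, and the ${\rm GL}^+(2,\RR)$-stabiliser of $-1$ acts transitively on $\HH$, so a further group element brings $Z$ to $Z_0$ and $\sigma$ to $\sigma_0$.

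\emph{Main obstacle.} The crux is Stage 1: establishing $\sigma$-stability of the skyscrapers without any a priori compatibility between $\sigma$ and the standard heart. The hypothesis $g(C)>0$ enters essentially here, because on $\PP^1$ spherical twists of the exceptional line bundles $\ko(n)$ produce stability conditions whose skyscrapers are not stable, giving infinitely many $\widetilde{\rm GL}\!{}^+(2,\RR)$-orbits. What makes the higher-genus argument close is the combination of the local finiteness of $\sigma$ with the orthogonality $\Hom^*(k(x),k(y))=0$ for $x\ne y$, which prevents the hypothetical HN factors of $k(x)$ from being realised coherently as $x$ varies.
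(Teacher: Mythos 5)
Your overall strategy (stability of skyscrapers, normalisation, identification of the heart) is the right one, but the two steps that carry the actual content do not go through as written. First, Stage 1 is precisely where the theorem lives, and ``a contradiction \`a la Macr\`i'' from the orthogonality of the $k(y)$ and local finiteness is not an argument; moreover, the ingredients you name are not the ones that make the proof work. The paper's Lemma \ref{lem:keycurves} proceeds by writing every object of $\Db(C)$ as $\bigoplus E_i[-i]$ (homological dimension one), taking the triangle $A\to k(x)\to B$ given by the first Harder--Narasimhan factor, extracting the long exact sequence \eqref{eqn:les} relating the cohomology sheaves of $A$ and $B$, and killing the connecting maps by twisting with a nonzero section of $\omega_C$ and applying Serre duality -- this is where $g(C)>0$ enters, not through ``absence of spherical objects'' and not through pairwise orthogonality of skyscrapers, which plays no role. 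Local finiteness enters only afterwards, to guarantee finite Jordan--H\"older filtrations in the passage from semistability to stability; your appeal to ``simplicity of $k(x)$ in $\coh(C)$'' is only usable once one already knows the stable factors are sheaves.

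Second, and more seriously, Stage 2 contains a false deduction. Knowing only that the $k(x)$ are stable in $\kp(1)$, the non-vanishing $\Hom(L,k(x))\ne0$ gives (via Remark \ref{rem:useful}, i) and Exercise \ref{rem:GP}, i)) merely that \emph{some} HN factor of $L$ has phase $\le 1$, i.e.\ $\phi^-(L)\le1$; dually, $\Ext^1(k(x),L)\ne0$ gives only $\phi^+(L)\ge0$. These two inequalities constrain the extreme factors and are perfectly consistent with $L$ having HN factors of arbitrarily large phase, so they do not place $L$ in $\kp(0,1]$. To pin $L$ down you must first prove that $L$ itself is $\sigma$-stable -- this is the other half of Lemma \ref{lem:keycurves}, proved by the same sheaf-theoretic argument as for $k(x)$: the first HN factor $A$ of $L$ is shown to lie in $\coh(C)$, hence to be a line bundle with torsion quotient $B$, forcing $B=0$ -- and only then do the two non-vanishings give $\phi_x-1<\phi_L<\phi_x$ for the genuine phase $\phi_L$. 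These inequalities are also what the paper uses to see that $Z$ is an orientation-preserving automorphism of $N(C)\otimes\RR\cong\RR^2$, a point your final normalisation step takes for granted.
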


\begin{proof}
First, recall the following fact that works for arbitrary abelian cate\-gories of homological dimension $\leq1$: Any
object $E\in\Db(C)$ can be written as $E\cong\bigoplus E_i[-i]$ with $E_i\in\coh(C)$ (see e.g.\ \cite[Cor.\ 3.15]{HuyFM}).
This is proved by induction on the length of a complex. If $E$ is concentrated in degree $\geq i$, then
there exists an exact triangle $H^i(E)[-i]\to E\to F$ with $H^j(F)=H^j(E)$ for $j>i$ and $=0$ otherwise.
So, we can assume $F\cong\bigoplus F_j[-j]$.
But then the boundary map is in $\Ext^1(F,H^i(E)[-i])=\bigoplus_{j>i} \Ext^{1+j-i}(F_j,H^i(E))$ which is trivial,
as $1+j-i>1$.

The key Lemma \ref{lem:keycurves} below says that all point sheaves
$k(x)$ and all line bundles $L\in\Pic(C)$ are stable with respect to any $\sigma\in{\rm Stab}(C)$. Using this,
one concludes as follows. Let $\phi_x$ be the phase of the stable object $k(x)$ and let $\phi_L$ be the phase of the stable object $L\in\Pic(C)$. Since $\Hom(L,k(x))\ne0$ and by Serre duality also $\Ext^1(k(x),L)\cong\Hom(L,k(x))^*\ne0$, 
stability yields (see Exercise \ref{rem:GP}) $\phi_L<\phi_x$ and $\phi_x-1<\phi_L$.

The numerical Grothendieck group of $C$ is $N(C)\cong\ZZ\oplus\ZZ$ generated by rank and degree.
E.g.\ $[k(x)]=(0,1)$ and $[L]=(1,\deg(L))$. Thus, $Z:N(C)\otimes\RR\cong\RR^2\to \CC$
can be pictured as

$$\hskip-11cm
\begin{picture}(-100,100)
\put(0,20){\vector(0,1){50}}
\put(0,20){\vector(3,-2){30}}
\put(-15,72){\tiny\mbox{$k(x)$}}
\put(32,-6){\tiny\mbox{$L$}}
\put(85,45){\mbox{$\mapsto$}}
\put(190,20){\vector(-1,1){45}}
\put(190,20){\vector(3,2){30}}
\put(135,70){\tiny\mbox{$Z(k(x))$}}
\put(225,40){\tiny\mbox{$Z(L)$}}
\put(190,30){\qbezier(-7.2,-2.3)(-3.3,0.3)(0,0)}
\put(190,20){\oval(20,20)[rt]}
\put(190,35){\tiny\mbox{$\phi_x$}}
{\linethickness{0.005mm}\put(190,20){\line(1,0){2}}
\put(195,20){\line(1,0){2}}\put(200,20){\line(1,0){2}}\put(205,20){\line(1,0){2}}\put(210,20){\line(1,0){2}}
\put(215,20){\line(1,0){2}}\put(220,20){\line(1,0){2}}\put(225,20){\line(1,0){2}}}
\put(190,30){\qbezier(16.3,1.1)(19.9,-4.5)(20,-10)}
\put(212,25){\tiny\mbox{$\phi_L$}}
\end{picture}
$$
\vskip0.5cm
which shows that $Z$ can be seen as an orientation preserving automorphism of $\RR^2$ (under the identification of $N(C)$
with $\ZZ^2$ as chosen above).  But then, by composing  with a matrix in ${\rm GL}\!{}^+(2,\RR)$, it can be turned  into
$-\deg+i\cdot\rk$, which is rotation by $\frac{\pi}{2}$ anti-clockwise. In other words, in the $\widetilde{\rm GL}\!{}^+(2,\RR)$-orbit of any numerical stability condition $\sigma$ one finds one sta\-bili\-ty condition, say $\sigma'$,
with stability function $Z=-\deg+i\cdot\rk$. Moreover,
we may assume that for all points $x\in C$ the phase of $k(x)$ is $\phi_x=1$ and hence $L$ will have
phase $\phi_L\in(0,1)$. (A priori the phases of $k(x_1)$ and $k(x_2)$ for two distinct points could be different. But since
the two sheaves are numerically equivalent, one has $\phi_{x_1}-\phi_{x_2}\in2\ZZ$. Using $\phi_{x_i}-1<\phi_L<\phi_{x_i}$, $i=1,2$,
for an arbitrary line bundle $L$, one finds that in fact $\phi_{x_1}=\phi_{x_2}$.)
Thus, the heart $\ka'=\kp'(0,1]$ of $\sigma'$ contains all $k(x)$ and all line bundles $L\in\Pic(C)$. Since any coherent
sheaf on $C$ admits a filtration with quotients either isomorphic to point sheaves or line bundles, this shows
$\coh(C)\subset\ka'$ and hence $\coh(C)=\ka'$, as both are hearts of bounded t-structures on $\Db(C)$. 
\end{proof}

\begin{lem}\label{lem:keycurves}
Suppose $\sigma=(\kp,Z)$ is a numerical stability condition on $\Db(C)$. Then all point sheaves
$k(x)$, $x\in C$, and all line bundles $L\in\Pic(C)$ are stable with respect to $\sigma$.
\end{lem}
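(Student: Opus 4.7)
I would argue in two stages: first establish $\sigma$-semistability of $k(x)$ and $L$, and then upgrade this to stability.

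For semistability, suppose $E\in\{k(x),\,L\}$ is not $\sigma$-semistable, with Harder--Narasimhan factors $A_1,\ldots,A_n$ of phases $\phi_1 > \cdots > \phi_n$ and $n \geq 2$. By Remark~\ref{rem:useful} there are non-zero morphisms $A_1 \to E$ and $E \to A_n$. The crucial structural input, already used in the theorem above, is that every object of $\Db(C)$ splits as $\bigoplus H^j[-j]$; writing $A_1 \cong \bigoplus_j H^j(A_1)[-j]$, the group $\Hom(A_1, E)$ decomposes as $\bigoplus_j \Ext^j(H^j(A_1), E)$, which vanishes outside $j \in \{0,1\}$ by homological dimension. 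By Serre duality on $C$, the $j=1$ summand is dual to $\Hom(E, H^1(A_1)\otimes\omega_C)$. Hence non-vanishing of $\Hom(A_1, E)$ forces $H^0(A_1)$ either to surject onto $E$ (when $E=k(x)$) or to contain $E$ as a subsheaf (when $E=L$), or the analogous statement for $H^1(A_1)\otimes\omega_C$; a dual analysis applies to $A_n$. Combined with $\sum_i [A_i] = [E]$ in $N(C) = \ZZ^2$ (equal to $(0,1)$ for $k(x)$ and $(1,\deg L)$ for $L$) and the strict phase ordering, these constraints should force $n = 1$.

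To promote semistability to stability, suppose $E$ is $\sigma$-semistable of phase $\phi_E$ but admits a proper subobject $0 \neq F \subsetneq E$ in $\kp(\phi_E)$ with quotient $G$. Taking cohomology of the triangle $F \to E \to G$ in the standard $\coh(C)$-t-structure gives a long exact sequence featuring $H^0(E) = E$. Since $k(x)$ is simple in $\coh(C)$ and $L$ admits no proper rank-one torsion-free subsheaf of equal degree, the maps $H^0(F) \to E$ and $E \to H^0(G)$ are each either zero or identify $E$ with a sub- or quotient sheaf. Combined with $[F] + [G] = [E]$ and $\phi(F) = \phi(G) = \phi_E$, this should force $F = E$ or $G = 0$, contradicting properness.

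The main obstacle is the semistability step: individually each $A_i$ may be a genuine complex, and the delicate bookkeeping is to combine the sheaf-level restrictions on cohomologies with the strict phase inequalities to exclude $n \geq 2$. The stability upgrade is more routine because $[k(x)] = (0,1)$ is primitive in $N(C)$ and line bundles are indecomposable in $\coh(C)$.
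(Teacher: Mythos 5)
Your toolbox is the right one (the splitting $E\cong\bigoplus H^j(E)[-j]$, homological dimension one, Serre duality, primitivity of $(0,1)$ and $(1,\deg L)$ in $N(C)$), but the semistability step has a genuine gap: the data you extract --- $\Hom(A_1,E)\ne0$, $\Hom(E,A_n)\ne0$ and $\sum_i[A_i]=[E]$ --- is far too weak to force $n=1$, and no amount of bookkeeping will close it. A telltale sign is that your argument never genuinely uses $g(C)>0$ (the Serre duality identity you invoke holds on $\PP^1$ as well), yet the statement is false on $\PP^1$: for the stability conditions attached to the exceptional collection $\ko(n),\ko(n+1)$ the sheaf $k(x)$ sits in a triangle $\ko(n+1)\to k(x)\to\ko(n)[1]$ and is destabilized, and most line bundles are not even semistable. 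The Hom-group one must constrain is not $\Hom(A_1,E)$ but $\Hom(A_1,B)$, where $B$ is the cone of $A_1\to E$ carrying the remaining HN factors; by the phase ordering $\Hom^{\le 0}(A_1,B)=0$. Writing $A_1$ and $B$ as direct sums of shifted sheaves, the cohomology sequence of the triangle reduces to a five-term exact sequence $0\to H^{-1}(B)\to H^0(A_1)\to E\to H^0(B)\to H^1(A_1)\to 0$ (vanishing of $\Hom^{-1}(A_1,B)$ kills all other terms), and then --- this is precisely where $g>0$ enters --- a nonzero map $H^{-1}(B)\to H^0(A_1)$ would, after twisting by a nonzero section of $\omega_C$, produce via Serre duality a nonzero class in $\Ext^1\bigl(H^0(A_1),H^{-1}(B)\bigr)\subset\Hom(A_1,B)$, a contradiction; similarly for $H^0(B)\to H^1(A_1)$. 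Hence $A_1$ and $B$ are honest sheaves, $0\to A_1\to E\to B\to 0$ is exact in $\coh(C)$, and simplicity of $k(x)$ (resp.\ the fact that a nonzero quotient of $L$ by a rank-one subsheaf is torsion and so receives a nonzero map from that subsheaf) forces $B=0$.

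The same issue undermines your stability upgrade: for an arbitrary proper subobject $F\subset E$ in $\kp(\phi_E)$ you only get $\Hom^{<0}(F,G)=0$ from semistability, not $\Hom(F,G)=0$, so the argument above does not show $F,G\in\coh(C)$ and your long exact sequence in $\coh(C)$-cohomology never gets off the ground. The paper avoids this by choosing the particular subobject of Exercise~\ref{exer:allsta}: $A$ collects all Jordan--H\"older factors of $E$ isomorphic to a fixed stable factor $A_0$, which guarantees $\Hom(A,B)=0$; the lemma then shows $A,B\in\coh(C)$, whence $B=0$, all stable factors of $E$ are isomorphic to $A_0$, and $[E]=n[A_0]$. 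Only at this last step does your (correct) observation about the primitivity of $(0,1)$ and $(1,\deg L)$ take over to give $n=1$.
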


\begin{proof} The following is a simplified version of an argument in \cite{GR}.

i) Consider $E\in\coh(C)$ and an exact triangle $A\to E\to B$ in $\Db(C)$ with $\Hom^{<0}(A,B)=0$. {\it Claim}: Then $A\cong A_0\oplus A_1[-1]$ and $B\cong B_0\oplus B_{-1}[1]$ with $A_i,B_i\in\coh(C)$.

Write $A=\bigoplus A_i[-i]$ and $B=\bigoplus B_i[-i]$. The long cohomology sequence of the exact triangle yields
an exact sequence \begin{equation}\label{eqn:les}\xymatrix{0\ar[r]& B_{-1}\ar[r]^\varphi& A_0\ar[r]& E\ar[r]& B_0\ar[r]^\psi& A_1\ar[r]& 0}\end{equation} and $B_{i-1}\cong A_i$ for $i\ne0,1$.
If $A_i\ne0$ for some $i\ne0,1$, then $$0\ne\Hom(A_i[-i],B_{i-1}[-i])=\Hom^{-1}(A_i[-i],B_{i-1}[-(i-1)])$$ and hence
$\Hom^{-1}(A,B)\ne0$, which contradicts the assumption. 

ii) Consider $E\in\coh(C)$ and an exact triangle $A\to E\to B$ in $\Db(C)$ with $\Hom^{\leq0}(A,B)=0$.
{\it Claim}: Then $A,B\in\coh(C)$. (Here one uses the assumption $g(C)>0$.)

Use the long exact sequence (\ref{eqn:les}). If $\varphi\ne0$, then twisting  it
with sections of $\omega_C$ yields non-trivial  $B_{-1}\to A_0\otimes\omega_C$. Hence, by Serre
duality,  $\Hom^{1}(A_0,B_{-1})\ne0$ and, therefore, $\Hom(A,B)\ne0$, which contradicts the assumption. Thus,
$\varphi=0$ and hence $B_{-1}=0$. The argument for proving $\psi=0$ is similar.

iii) {\it Claim}: All point sheaves $k(x)$ and all line bundles $L$ are semistable.

Apply ii) to $E=k(x)$ and $E=L$ by letting $A$ be the first semistable factor. Then,
$\Hom^{\leq0}(A,B)=0$ and by ii) $A,B\in\coh(C)$. For $E=k(x)$ this immediately yields $B=0$, as
$k(x)$ has no proper non-trivial subsheaves. For $E=L$, the subsheaf $A$ must also be a line bundle
and hence $B$ a torsion sheaf (or trivial). But if $B\ne0$, then $\Hom(A,B)\ne0$. Contradiction.

iv) {\it Claim}: All point sheaves $k(x)$ and all line bundles $L$ are stable.

We apply again ii) to $E=k(x)$ and $E=L$. Let $A_0$ be a stable factor of $E$ with $\Hom(A_0,E)\ne0$.
Then there exists an exact triangle $A\to E\to B$ with $A,B$ semistable and such that
all stable factors of $A$ are isomorphic to $A_0$ and $\Hom(A,B)=0$, cf.\ Exercise
\ref{exer:allsta}. (The vanishing of 
$\Hom^{<0}(A,B)$ follows directly from semistability.) Thus, $A,B\in\coh(C)$ and as in iii) this
shows $B=0$, i.e.\ all stable factors of $E$ are isomorphic to $A_0$. Hence, $[E]=n[A_0]$,
where $n$ is the number of stable factors. Since $[k(x)]=(0,1)$ and $[L]=(1,\deg(L))$, one must have
$n=1$, i.e.\ $k(x)$ and $L$ are stable.
\end{proof}

\begin{remark}
i) The minimal objects in $\coh(C)$ are the point sheaves $k(x)$. In the tilts of $\coh(C)$ occuring as hearts
of the other stability conditions on $\coh(C)$ the description is more complicated, see e.g.\ \cite[Lem.\ 2.4]{Wolf1}.

ii) For $g=1$ the quotient of ${\rm Stab}(C)$ by the natural left action of the group $\Aut(\Db(C))$ of all exact
$k$-linear autoequivalences can be described as the quotient of  ${\rm GL}\!{}^+(2,\RR)$
by ${\rm SL}(2,\ZZ)$ which can also be interpreted as a $\CC^*$-bundle over the moduli space of elliptic curves
(see \cite{BrAnn}).
Indeed, $\Aut(\Db(C))$ compares to ${\rm GL}\!{}^+(2,\RR)$ via the diagram 
$$\xymatrix{0\ar[r]&\ZZ[2]\times(C\times\widehat C)\ar[r]&\Aut(\Db(C))\ar[d]\ar[r]&{\rm SL}(2,\ZZ)\ar@{^(->}[d]\ar[r]&0\\
0\ar[r]&\ZZ[2]\ar[r]&\widetilde{\rm GL}\!{}^+(2,\RR)\ar[r]&{\rm GL}\!{}^+(2,\RR)\ar[r]&0.}$$
Note that $C\times\widehat C$ acts, by translation resp.\ tensor product, trivially on ${\rm Stab}(C)$.

iii) Stability conditions on $\Db(\PP^1)$ can also be described.  It turns out that ${\rm Stab}(\PP^1)$ is
connected and  simply connected, see \cite{Ma}, and in fact ${\rm Stab}(\PP^1)\cong\CC^2$, see \cite{Ok}.
 
\end{remark}

\section{Stability conditions on surfaces}\label{sec:stabonsurf}

We consider a smooth projective surface $X$ over a field
$k=\bar k$, but we shall also be interested in
compact complex surfaces. For any ample class $\omega\in{\rm NS}(X)\otimes\RR$ (or
a K\"ahler class $\omega\in H^{1,1}(X,\RR)$) one defines the degree and the slope 
(if $\rk(E)\ne0$) as $$\deg_\omega(E):=({\rm c}_1(E).\omega)\text{~~resp.~~}
 \mu_\omega(E):=\frac{\deg_\omega(E)}{\rk(E)}.$$ 
Fix $\beta\in\RR$ and think of it as $\beta=(B.\omega)$ for some $B\in{\rm NS}(X)\otimes\RR$.
Then consider the full additive subcategories $\kt_{(\omega,\beta)},\kf_{(\omega,\beta)}\subset\coh(X)$:
$$\kt_{(\omega,\beta)}:=\{E\in\coh(X)~|~\forall\, E\twoheadrightarrow F\ne0~\text{torsion free}:~\mu_\omega(F)>\beta\}$$
and 
$$\kf_{(\omega,\beta)}:=\{E\in\coh(X)~|~E~\text{torsion  free and}~\forall\,0\ne F\subset E:~\mu_\omega(F)\leq\beta\}.$$
 
We leave it as an exercise to verify that this defines a torsion theory for $\coh(X)$. One needs to use the existence of the Harder--Narasimhan filtration and the fact that the saturation $F\subset F'\subset E$ of a subsheaf
$F\subset E$, i.e.\ the minimal $F'$ such that
$E/F'$ is torsion free, satisfies $\mu_\omega(F)\leq\mu_\omega(F')$.
Note that for the existence of the Harder--Narasimhan filtration it would
be enough to assume that $\omega$ is nef. (But Jordan--H\"older filtrations need a stronger positivity.)

The tilt of $\coh(X)$ with respect to the torsion theory
$(\kt_{(\omega,\beta)},\kf_{(\omega,\beta)})$ is denoted
$$\ka(\omega,\beta)\subset\Db(X):=\Db(\coh(X))$$
or $\ka(\exp(B+i\omega))$ if $\beta=(B.\omega)$.

\begin{ex}
For a closed point $x\in X$ the skyscraper sheaf
$k(x)$ is contained in $\ka(\omega,\beta)$. If $E$ is  $\mu_\omega$-stable sheaf with $\mu_\omega(E)=\beta$, then
$E[1]\in\ka(\omega,\beta)$.
\end{ex}

\subsection{Classification of hearts}\label{sec:Classi}
For the following see \cite[Prop.\ 10.3]{BrK3}. 

\begin{thm}\label{thm:detheart}
If $\sigma$ is a numerical stability condition on $\Db(X)$ such that
all point sheaves $k(x)$, $x\in X$, are $\sigma$-stable of phase $1$, then
the heart $\ka_\sigma$ of $\sigma$ is of the form $\ka_\sigma=\ka(\omega,\beta)$ for some
nef class $\omega\in{\rm NS}(X)$ and $\beta\in\RR$.\footnote{In fact, one proves $(\omega.C)>0$ 
for all curves $C$. This is not quite enough to conclude that $\omega$ is ample. But if $\sigma$
is `good', i.e.\ contained in a maximal component, then $\omega$ will in addition be in the interior of the nef
cone and hence ample.}\label{good} 
\end{thm}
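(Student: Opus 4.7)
\emph{Proof proposal.} The plan is threefold: (1) show $\ka_\sigma$ is a tilt of the standard heart $\coh(X) \subset \Db(X)$ by establishing the sandwich condition of Proposition \ref{prop:tiltssand}; (2) identify the resulting torsion pair on $\coh(X)$ via the stability function $Z$; (3) verify nefness of the resulting class $\omega$.

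\emph{Step 1 (sandwich).} By Proposition \ref{prop:tiltssand}, $\ka_\sigma$ is a tilt of $\coh(X)$ iff the corresponding t-structure sits between the standard one and its shift by $[1]$; equivalently, every sheaf $F \in \coh(X)$ has all its $\sigma$-HN factors of phase in $(-1, 1]$. I would derive this from the hypothesis $k(x) \in \kp(1)$ stable. For the upper bound $\phi^+_\sigma(F) \leq 1$: set $G := \tau^{>1}_\sigma F \in \kp(>1)$; by Exercise \ref{rem:GP} we have $\Hom(G, k(x)) = 0$ for all $x$, so applying $\Hom(-, k(x))$ to the triangle $G \to F \to F/G$ gives a surjection $\Hom(F/G, k(x)) \to \Hom(F, k(x))$. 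Combining this with the standard cohomology long exact sequence of the same triangle (which constrains $H^i_{\rm std}(G)$ using $H^i_{\rm std}(F) = 0$ for $i \neq 0$) and the geometric fact that every sheaf has non-trivial quotients to $k(x)$'s for $x \in \supp F$, one forces $G = 0$. The lower bound $\phi^-_\sigma(F) > -1$ is the Serre-dual assertion: since $k(x) \otimes \omega_X \cong k(x)$ one has $\Hom(k(x), F[2]) \cong \Hom(F, k(x))^\vee \neq 0$ for $x \in \supp F$, and a non-zero map $k(x) \to F[2]$ combined with Exercise \ref{rem:GP} constrains the lowest HN factors of $F$.

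\emph{Step 2 (identifying the torsion pair).} The sandwich plus Proposition \ref{prop:tiltssand} yields a torsion pair $(\kt, \kf)$ on $\coh(X)$ with $\kt = \coh(X) \cap \ka_\sigma$ and $\kf = \{F \in \coh(X) : F[1] \in \ka_\sigma\}$, whose tilt is $\ka_\sigma$. From $k(x) \in \kt$ and extension-closedness of $\ka_\sigma$ one sees that all $0$-dimensional sheaves lie in $\kt$; a filtration argument using $\Hom(\kt, \kf) = 0$ extends this to every torsion sheaf. To read off $\omega$ and $\beta$, decompose $Z : N(X) \otimes \RR \to \CC$ along the basis $(\rk, c_1, \chi)$ of $N(X) \otimes \QQ$. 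Since $Z(k(x)) \in \RR_{<0}$, normalizing $Z(k(x)) = -1$ makes the imaginary part a real linear functional of the form $\Im Z(E) = \omega \cdot c_1(E) - \beta \cdot \rk(E)$ for some $\omega \in \NS(X) \otimes \RR$ and $\beta \in \RR$. For a torsion-free sheaf $E$ one then has $\mu_\omega(E) > \beta \Leftrightarrow \Im Z(E) > 0$; combining this with the constraints $Z(\kt) \subset \overline\HH$, $Z(\kf) \subset -\overline\HH$, together with the classical $\mu_\omega$-Harder--Narasimhan filtration, identifies $(\kt, \kf) = (\kt_{(\omega, \beta)}, \kf_{(\omega, \beta)})$.

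\emph{Step 3 (nefness).} For any irreducible curve $C \subset X$, the structure sheaf $\ko_C$ is torsion, hence lies in $\kt \subset \ka_\sigma$, so $Z(\ko_C) \in \overline\HH$ and $\Im Z(\ko_C) \geq 0$. A direct Chern character computation yields $\Im Z(\ko_C) = \omega \cdot [C]$, so $\omega \cdot C \geq 0$ for every irreducible curve; hence $\omega$ is nef.

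\emph{Main obstacle.} Step 1, and particularly the upper bound $\phi^+_\sigma(F) \leq 1$, is the substantive part. The difficulty is that the putative high-phase HN piece $\tau^{>1}_\sigma F$ is only an object of $\Db(X)$ without a priori geometric interpretation, so passing from the categorical vanishing $\Hom(G, k(x)) = 0$ and non-vanishing $\Hom(F, k(x)) \neq 0$ to the desired contradiction requires careful bookkeeping of the standard cohomology sheaves of $G$ and their supports versus the maps $F \to k(x)$. The remaining steps are essentially linear algebra on $N(X)$ together with a nefness check on curves.
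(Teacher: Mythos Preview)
Your outline --- sandwich, identify the torsion pair, check nefness --- matches the paper's, but you are missing the one technical lemma that makes all three steps close. Before anything else, the paper proves (its Claim~1): for $E\in\kp(0,1]$ one has $H^i(E)=0$ for $i\ne0,-1$ with $H^{-1}(E)$ torsion free, and every stable object of phase $1$ is either some $k(x)$ or $F[1]$ with $F$ locally free. This follows from $\Ext^i(E,k(x))=0$ for $i\ne0,1$ (stability of $E$ and $k(x)$ plus Serre duality) fed into the spectral sequence $E_2^{p,q}=\Ext^p(H^{-q}(E),k(x))\Rightarrow\Ext^{p+q}(E,k(x))$. You never establish such a cohomological amplitude bound, and your Step~1 cannot be completed without it: from $\Hom(G,k(x))=0$ alone you cannot kill $G$ (any $G=F[1]$ with $F$ a sheaf already satisfies this). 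The paper's route is different: if the first HN factor $A$ of a sheaf $F$ had phase $>1$, Claim~1 applied to a shift of $A$ forces $H^i(A)=0$ for all $i\ge0$; the nonzero map $A\to F$ then produces a nonzero $\Ext^{<0}$ between sheaves, which is impossible.

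The same missing lemma bites twice in Step~2. First, your ``filtration argument'' does not promote $0$-dimensional to all torsion sheaves: a sheaf like $\ko_C$ is not an extension of point sheaves. The paper instead writes a torsion sheaf $E$ as $0\to A\to E\to B\to 0$ with $B\in\kf$, notes $B[1]\in\kp(0,1]$, and invokes Claim~1 to get $H^0(B)=H^{-1}(B[1])$ torsion free; since $E\twoheadrightarrow H^0(B)$ with $E$ torsion, $B=0$. Second, the borderline case $\mu_\omega(E)=\beta$ for a $\mu_\omega$-stable sheaf $E$ is not settled by your sign constraints on $\Im Z$: here $\Im Z(E)=0$, and to place $E$ in $\kf$ rather than $\kt$ the paper uses that stable objects in $\kp(1)$ are $k(x)$ or shifts of locally free sheaves (Claim~1\,ii)), so a torsion-free sheaf of positive rank cannot land in $\kp(1)$. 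In short, the missing ingredient is not bookkeeping but a genuine structural input about objects in the heart, and once you prove that lemma the rest of your sketch becomes the paper's proof.
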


We will give the main technical arguments that go into the proof. They illustrate standard techniques
in the study of $\coh(X)$ and $\Db(X)$.

\smallskip
\noindent
{\bf Claim 1:} i) If $E\in\kp(0,1]$, then $H^i(E)=0$ for $i\ne0,-1$ and $H^{-1}(E)$ is torsion free.

ii) If $E\in\kp(1)$ is stable, then $E\cong k(x)$ for some $x\in X$ or $E[-1]\in\coh(X)$, which in addition is locally free.

\begin{proof}
i) It is enough to show the assertion for $E$ stable and not isomorphic to any point sheaf
$k(x)$. Then stability of $E$ and $k(x)$
implies $$\Hom^{<0}(E,k(x))=\Hom^{\leq 0}(k(x),E)=0.$$ Thus, by Serre duality $\Ext^i(E,k(x))=0$ for $i\ne0,1$.

Write $E\cong E^\bullet$ with $E^k$ locally free. Then $E^\bullet$ is not exact in degree $k$ in a point $x$ if and
only if $H^k(E^\bullet\otimes k(x))\ne0$. But this cohomology also computes $\Ext^{k}(E,k(x))^*$
and thus only $k=0,1$ are possible. This proves the first part of i).

Now consider the spectral sequence
\begin{equation}\label{eqn:ss}
E_2^{p,q}=\Ext^p(H^{-q}(E),k(x))\Rightarrow \Ext^{p+q}(E,k(x))
\end{equation}
for which $E_2^{p,q}=0$ if $p\ne0,1,2$. Let $i$ and $j$ be maximal resp.\ minimal with $H^i(E)\ne0\ne H^j(E)$.
Then the marked boxes in the picture of (\ref{eqn:ss}) survive the passage to $E_\infty$:
\vskip0.5cm
$$\hskip-3cm
\begin{picture}(-100,100)
\put(-105,70){\small\mbox{$-j$}}
\put(-105,30){\small\mbox{$-i$}}
\put(-90,80){\line(1,0){100}}
\put(-90,65){\line(1,0){100}}
\put(-90,25){\line(1,0){100}}
\put(-90,40){\line(1,0){100}}
\put(10,65){\line(0,1){15}}
\put(10,25){\line(0,1){15}}
\put(-22,65){\line(0,1){15}}
\put(-55,65){\line(0,1){15}}
\put(-22,25){\line(0,1){15}}
\put(-55,25){\line(0,1){15}}
\put(-45,25){\line(1,1){15}}
\put(-45,40){\line(1,-1){15}}
\put(-45,65){\line(1,1){15}}
\put(-45,80){\line(1,-1){15}}
\put(-12,65){\line(1,1){15}}
\put(-12,80){\line(1,-1){15}}
\put(-80,25){\line(1,1){15}}
\put(-80,40){\line(1,-1){15}}
\put(-75,15){\small\mbox{$0$}}
\put(-41,15){\small\mbox{$1$}}
\put(-10,15){\small\mbox{$2$}}
\put(-40,90){\small\mbox{$E_2^{1,1}$}}
\put(-10,90){\small\mbox{$E_2^{2,1}$}}
\put(-90,20){\vector(0,1){80}}
\end{picture}
$$
Note that for $x\in{\rm supp}(H^i(E))$, clearly $E_2^{0,-i}\ne0$ and hence $E^{-i}\ne0$ which proves $i\in\{0,-1\}$
confirming what we have seen already. But the spectral sequence also proves the second part of i).
Indeed, suppose $H^{-1}(E)$ is not torsion free. If it has zero-dimensional torsion, then there exists a point $x\in X$ with
$\Hom(k(x),H^{-1}(E))\ne0$ and thus by Serre duality $E_2^{2,1}\ne0$ contradicting $E^3=0$.
If the torsion of $H^{-1}(E)$ is purely one-dimensional, then there exists a point $x\in X$
with $\Ext^1(k(x),H^{-1}(E))\ne0$ (use that $\Ext^1(k(x),\ko_C)\ne0$ for a curve $C$ and a smooth point $x\in C$).
Thus, by Serre duality, $E_2^{1,1}\ne0$ contradicting $E^2=0$. This concludes the proof of i).

For $k(x)\not\cong E\in\kp(1)$ stable one has $\Hom(E,k(x))=0$ (cf.\ Remark
\ref{rem:useful}, ii)) and hence 
$i=j=-1$ in (\ref{eqn:ss}), i.e.\ $E\cong F[1]$ for some torsion free $F\in\coh(X)$. Moreover, $F$ is locally free if and
only if $\Ext^1(k(x),F)=0$ for all $x\in X$. But $\Ext^1(k(x),F) \cong\Hom(k(x),E)=0$, again because
$E$ and $k(x)$ are non-isomorphic
stable objects of the same phase.
\end{proof}

\smallskip
\noindent
{\bf Claim 2:} i) If $E\in\coh(X)$, then $E\in\kp(-1,1]$.

ii) If $E\in\coh(X)$ is torsion, then $E\in\kp(0,1]$. (This part will be proved only later, but
logically the assertion belongs  here.)

\begin{proof}
For any $A\in\kp(\phi)$ Claim 1 shows that then $H^i(A)=0$ if $i\geq0$, $\phi>1$ or
if $i\leq0$, $\phi\leq-1$. Let $A\to E$ be the maximal semistable factor of $E$.
Then, $\Hom(H^i(A)[-i],E)\ne0$ for some $i$. But on the one hand, if
$\phi(A)>1$, then $H^i(A)=0$ for $i\geq0$, and on the other hand  $\Ext^{<0}=0$ on the abelian category
$\coh(X)$. Thus, $E\in\kp(\leq1)$.\footnote{An alternative proof can be given by using the spectral sequence $E_2^{p,q}=\Ext^p(H^{-q}(A),E)\Rightarrow \Ext^{p+q}(A,E)$.} A similar argument for the minimal semistable factor $E\to B$ proves
$E\in\kp(>\!-1)$. 
\end{proof}

\smallskip
\noindent
{\bf Claim 3:} Let $\kt:=\coh(X)\cap\kp(0,1]$ and $\kf:=\coh(X)\cap\kp(-1,0]$. Then
$(\kt,\kf)$ defines a torsion theory for $\coh(X)$ with tilt
$\kp(0,1]=\ka_\sigma$.

\begin{proof}
Apply Proposition \ref{prop:tiltssand} to compare the standard t-structure $\kd^{\leq0}\subset\Db(X)$ with
$\kd^{\leq0}_\sigma\subset\Db(X)$ given by $\sigma$. The assumptions $\kd_\sigma^{\leq0}\subset\kd$
and $\kd^{\leq-1}\subset\kd_\sigma^{\leq0}$ are easily verified. Here,
$\kd_\sigma^{\leq0}=\kp(>\!0)$.
\end{proof}

Note that so far we have not described the torsion theory $(\kt,\kf)$ explicitly,
but ii) in Claim 2 can now be proven easily. Indeed, decompose a torsion sheaf $E\in\coh(X)$
with respect to the torsion theory $(\kt,\kf)$
as $A\to E\to B$ with $A\in\kp(0,1]$ and $B\in\kp(-1,0]$. Then $H^0(B)$ is torsion free by Claim 1, i).
But the long cohomology sequence yields a surjection $E=H^0(E)\twoheadrightarrow H^0(B)$
and an exact sequence $H^1(E)\to H^1(B) \to H^2(A)$. Thus, if $E$  is torsion, then $H^0(B)=0$,
and, as $H^1(E)=0=H^2(A)$, also $H^1(B)=0$. Therefore,  $B=0$ and hence $E\in \kp(0,1]$.

\smallskip
For the following we need that the stability condition is numerical.
\noindent
{\bf Claim 4:} The imaginary part of $Z$ is of the form
$\langle (0,\omega,\beta),v(E)\rangle$ for some $\beta\in\RR$ and $\omega\in{\rm NS}(X)\otimes\RR$ with
$(\omega.C)>0$ for all curves $C\subset X$.

\begin{proof} 
Here, $v(E)={\rm ch}(E)\sqrt{{\rm td}(X)}$ is the Mukai vector of $E$, which
for K3 surfaces equals $(\rk(E),{\rm c}_1(E),\chi(E)-\rk(E))$. The Mukai pairing
$\langle~~,~~\rangle$ is the usual intersection pairing with a sign
in the pairing of $H^0$ and $H^4$. Since it is non-degenerate and $Z$ is numerical,
there exists a vector $w=(w_0,w_1,w_2)\in{\rm NS}(X)\otimes\CC$ such that $Z(E)=\langle w,v(E)\rangle$ for all $E$.
As $Z(k(x))\in\RR_{<0}$ and $v(k(x))=(0,0,1)$, one has $w_0\in\RR_{>0}$ and thus
${\rm Im}(w)=(0,\omega,\beta)$. 

Let $C\subset X$ be a curve. Then by Claim 2, $\ko_C\in\kp(0,1]$.
Since $v(\ko_C)=(0,[C],~~)$, this yields $(\omega.C)={\rm Im} (Z(\ko_C))\geq0$. If $(\omega.C)=0$, then
$Z(\ko_C)\in\RR_{<0}$ and hence $\ko_C\in\kp(1)$. 
The stable factors of $\ko_C$, which are all in $\kp(1)$, are by Claim 1 of the form  $k(x)$ or $E[1]$
with $E\in\coh(X)$ locally free. Since their ranks add up to $\rk(\ko_C)=0$, only point sheaves
$k(x)$ can in fact occur. But this would yield the contradiction $(0,[C],\ast)=v(\ko_C)=\sum v(k(x_i))=(0,0,n)$. 
\end{proof}

\smallskip
\noindent
{\bf Claim 5:} Let $\omega$ and $\beta$ be as before.
Suppose $E\in\coh(X)$ is torsion free and $\mu_\omega$-stable. Then

i) $E\in\kt$ if and only if $\mu_\omega(E)>\beta$.

ii) $E\in\kf$ if and only if $\mu_\omega(E)\leq\beta$.

\begin{proof}
Since $(\kt,\kf)$ is a torsion theory, there exists a short exact sequence 
$0\to A\to E\to B\to 0$ in $\coh(X)$ with $A\in\kt$ and $B\in\kf$. By Claim 2 all torsion sheaves are in $\kt$.
Thus, since $\Hom(\kt,\kf)=0$, the sheaf $B$ must be torsion free.

By definition of $\kt$ one has $A\in\kp(0,1]$ and thus ${\rm Im}(Z(A))\geq0$ which is equivalent to $\mu_\omega(A)\geq\beta$. Similarly, for $B\in\kf$, one has $B\in\kp(-1,0]$ and hence ${\rm Im}(Z(B))\leq0$ or, equivalently, $\mu_\omega(B)\leq\beta$.
If both $A\ne0\ne B$, this would contradict   $\mu_\omega$-stability of $E$. Thus, $E=A\in\kt$ or $E=B\in\kf$.
Clearly, if $\mu_\omega(E)>\beta$, then $E\in \kt$. Similarly, if $\mu_\omega(E)<\beta$, then $E\in\kf$.
Thus, only the case $\mu_\omega(E)=\beta$, which is equivalent to ${\rm Im}(Z(E))=0$, remains to be settled.
Suppose $E\in\kt$, then $E\in\kp(1)$ and by Claim 1 all stable factors would be of the form $k(x)$ or $F[1]$ with $F$ locally free.
Thus, the sum of the ranks would be $\leq0$ contradicting the assumption that $E$ is torsion free.
\end{proof}

This finishes the proof of Theorem \ref{thm:detheart}. Indeed, from Claim 2, ii) and Claim 5
one deduces $\kt_{(\omega,\beta)}\subset\kt$ and $\kf_{(\omega,\beta)}\subset\kf$. Since
both, $(\kt,\kf)$ and $(\kt_{(\omega,\beta)},\kf_{(\omega,\beta)})$, are torsion theories for $\coh(X)$,
they actually coincide. But by Claim 3, the tilt of $\coh(X)$ with respect to $(\kt,\kf)$ is $\ka_\sigma$, which,
therefore, coincides with the the tilt of $\coh(X)$ with respect to $(\kt_{(\omega,\beta)},\kf_{(\omega,\beta)})$. But the
latter is by definition just $\ka(\omega,\beta)$.\qqed

\begin{cor}
If $X$ is a smooth projective surface, then $\coh(X)$ cannot be the heart of a numerical stability condition. 
\end{cor}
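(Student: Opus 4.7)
The plan is a reductio ad absurdum: suppose $\coh(X) = \ka_\sigma$ for some numerical stability condition $\sigma = (\kp, Z)$, normalise via the $\widetilde{\rm GL}\!{}^+(2,\RR)$-action so that point sheaves have phase $1$, invoke Theorem \ref{thm:detheart} on the rotated stability condition, and then play a sufficiently negative line bundle against the resulting torsion theory.

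I first observe that, as $\ka_\sigma = \coh(X)$, every $k(x)$ is simple in $\coh(X)$ and hence a minimal (so $\sigma$-stable) object of $\ka_\sigma$; since all $k(x)$ share the numerical class $(0,0,1)$, they share a common $\sigma$-phase $\phi_0 \in (0,1]$. I then twist $\sigma$ by the element of $\widetilde{\rm GL}\!{}^+(2,\RR)$ whose slicing-component is the translation $f(\phi) = \phi + (\phi_0 - 1)$, producing $\sigma'$ in which every $k(x)$ is $\sigma'$-stable of phase $1$; the heart is $\ka_{\sigma'} = \kp'(0,1] = \kp(\phi_0 - 1, \phi_0]$.

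Applying Theorem \ref{thm:detheart} to $\sigma'$ identifies $\ka_{\sigma'} = \ka(\omega,\beta)$ for some nef $\omega$ with $(\omega.C) > 0$ for all curves $C$ and some $\beta \in \RR$. I fix an ample class $H$ and set $L = \ko_X(-nH)$ with $n$ so large that $\mu_\omega(L) = -n(H.\omega) \leq \beta$. Since any non-zero subsheaf of $L$ is of the form $L \otimes \ki$ for an ideal sheaf $\ki$ and thus has $\mu_\omega$-slope at most $\mu_\omega(L) \leq \beta$, we get $L \in \kf_{(\omega,\beta)}$, so that $L[1] \in \ka(\omega,\beta) = \kp(\phi_0 - 1, \phi_0]$.

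The contradiction is now purely combinatorial: $L[1] \in \kp(\phi_0 - 1, \phi_0]$ forces the $\sigma$-phases of $L$'s Harder--Narasimhan factors into $(\phi_0 - 2, \phi_0 - 1]$, while $L \in \coh(X) = \kp(0,1]$ forces them into $(0,1]$. Since $\phi_0 \leq 1$ makes $(\phi_0 - 2, \phi_0 - 1] \subset (-2, 0]$, these intervals are disjoint, and uniqueness of the HN filtration gives $L = 0$, contradicting $L \neq 0$. The delicate step I anticipate is checking that the rotation $\sigma \leadsto \sigma'$ genuinely transports stability and the correct phase of point sheaves so that Theorem \ref{thm:detheart} applies; the conceptual point, absent for curves, is that surfaces always admit line bundles of arbitrarily negative $\mu_\omega$, forcing $\kf_{(\omega,\beta)} \neq 0$ for every $\beta$ and hence obstructing any identification of $\coh(X)$ with the tilted heart $\ka(\omega,\beta)$.
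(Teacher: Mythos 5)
Your proof is correct, and it follows the same overall strategy as the paper (reduce to Theorem \ref{thm:detheart} and then exhibit a shifted sheaf in the heart), but it implements the two key steps differently. Where you normalise the phase of the point sheaves to $1$ by acting with $\widetilde{\rm GL}\!{}^+(2,\RR)$, the paper pins the phase down without moving $\sigma$ at all: since $Z(I_{x_1,\ldots,x_n})=Z(\ko_X)-nZ(k(x))$ must stay in $\overline\HH$ for all $n$, one is forced to have ${\rm Im}\,Z(k(x))=0$, hence $Z(k(x))\in\RR_{<0}$ and $\phi_0=1$ on the nose. This buys a cleaner endgame: the heart remains literally $\coh(X)$, so $\ka(\omega,\beta)=\coh(X)$ and the presence of $L[1]$ (or any $E[1]$ with $E$ a $\mu$-stable sheaf of slope $\leq\beta$) in the heart is an immediate absurdity, with no need for your disjoint-interval/HN-uniqueness argument. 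Your route pays for skipping the ideal-sheaf computation by having to verify that the rotation transports stability and phases correctly (it does: the $\widetilde{\rm GL}\!{}^+(2,\RR)$-action merely relabels the slices $\kp'(\phi)=\kp(f(\phi))$, so stable objects stay stable and the new stability condition is still numerical) and by the final interval bookkeeping, which you carry out correctly. Your explicit choice $L=\ko_X(-nH)$ with the verification that all subsheaves $L\otimes\ki$ have slope $\leq\mu_\omega(L)$ is a concrete instance of the paper's appeal to the existence of a $\mu$-stable bundle of slope $\leq\beta$, and your closing observation --- that $\kf_{(\omega,\beta)}\neq 0$ for every $\beta$ because slopes of line bundles are unbounded below --- is exactly the right conceptual point distinguishing surfaces from curves.
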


\begin{proof} Suppose $\coh(X)$ is the heart of a stability condition.
Since point sheaves $k(x)\in\coh(X)$ do not contain proper subsheaves, they are automatically stable.
Now use $Z(I_{x_1,\ldots,x_n})=Z(\ko_X)-\sum Z(k(x_i))=Z(\ko_X)-nZ(k(x))$ which would be
contained in the lower half plane $-\HH$ except if $Z(k(x))\in\RR_{<0}$. Thus, all point sheaves $k(x)$ are stable
of phase one. By Theorem \ref{thm:detheart}, this shows $\coh(X)=\ka(\omega,\beta)$.
But on a projective surface there always exists a $\mu$-stable vector bundle $E\in\coh(X)$
with $\mu_\omega(E)\leq\beta$ and thus $E[1]\in\ka(\omega,\beta)$.\footnote{The argument
is still valid although $\omega$ only satisfies $(\omega.C)>0$.} Contradiction.
\end{proof}

\begin{remark}
Compare the corollary with Claim 3 which in particular shows that $\coh(X)$ is the tilt of the heart
of a stability condition. Indeed, $\coh(X)[1]$ is the tilt of $\ka(\omega,\beta)$
with respect to the torsion theory $\kt_\ka:=\kf[1]$ and $\kf_\ka:=\kt$.
So, not any tilt of the heart of a stability condition is again the heart of a stability condition.
The reason in our specific example is that $Z(\kf_\ka)$ `spreads out' over $\HH$, e.g.\ $Z(k(x))\in\RR_{<0}$ 
whereas ${\rm Re}(Z(E_n))/{\rm Im}(Z(E_n))\to\infty$ for $E_n$ a sequence of $\mu$-stable bundles
with fixed slope $\mu>\beta$ but ${\rm c}_2(E_n)\to\infty$.
\end{remark}

\begin{remark}
If $X$ is a K3 surface with $\Pic(X)=0$, in particular $X$ is not projective, then $\coh(X)$ is the heart of
a stability condition. As a stability function in this case one can choose $Z(E)=-r(\alpha+i\beta)-s$
with $\beta<0$, where $v(E)=(r,0,s)$. See \cite{HMSComp}.
\end{remark}

\begin{remark} For the following observations see \cite{HuyJAG}.

i) Recall that the only minimal objects in $\coh(X)$ are the point sheaves $k(x)$. For K3 surfaces
 the minimal objects in the tilted category $\ka(\omega,\beta)$ are of the form $k(x)$ and $E[1]$, where $E$ is a $\mu$-stable vector
bundle with $\mu_\omega(E)=\beta$. 

ii) By a classical theorem of Gabriel, $\coh(X)\cong\coh(X')$ (as $k$-linear categories) if and only
if $X\cong X'$ (as varieties over $k$). But there exist non isomorphic (K3) surfaces
with equivalent derived categories $\Db(X)\cong\Db(X')$ (as $k$-linear triangulated categories).
For projective K3 surfaces derived equivalence is determined by the tilted abelian categories:
$\Db(X)\cong\Db(X')$ if and only if there exist $(\omega,\beta)$ and $(\omega',\beta')$ on $X$ resp.\
$X'$ with $\ka(\omega,\beta)\cong\ka(\omega',\beta')$.
\end{remark}
\subsection{Construction of hearts}\label{sec:constrhearts}
For fixed $\omega$ and $\beta=(B.\omega)$ as before, we consider the function
$$Z(E):=\langle \exp(B+i\omega),v(E)\rangle.$$
Here, $v(E)={\rm ch}(E)\sqrt{{\rm td}(X)}=(r,\ell,s)$ and $\exp(B+i\omega)=(1,B+i\omega,\frac{B^2-\omega^2}{2}+i(B.\omega))$.

For the following  assume that $X$ is a K3 surface.\footnote{For arbitrary $X$ one would
need to assume $\omega^2\gg0$.}

\begin{prop}\label{prop:ZEsph}
If $\omega\in{\rm NS}(X)\otimes\RR$ is ample and such that $Z(E)\not\in\RR_{\leq0}$
for all spherical $E\in\coh(X)$, then $Z$ is a stability function on the abelian category $\ka(\omega,\beta)$.
\end{prop}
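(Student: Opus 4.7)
The strategy is to reduce to the torsion pair and then analyse each piece separately. Since $\ka(\omega,\beta)$ is by construction the tilt of $\coh(X)$ with respect to $(\kt_{(\omega,\beta)},\kf_{(\omega,\beta)})$, every $0\neq E\in\ka(\omega,\beta)$ fits in an exact triangle $H^{-1}(E)[1]\to E\to H^0(E)$ with $H^0(E)\in\kt_{(\omega,\beta)}$ and $H^{-1}(E)\in\kf_{(\omega,\beta)}$, not both zero. Because $Z$ is additive and $\overline\HH=\HH\cup\RR_{<0}$ is closed under sums of nonzero elements, it suffices to prove $Z(T)\in\overline\HH$ for all $0\neq T\in\kt_{(\omega,\beta)}$ and $-Z(F)=Z(F[1])\in\overline\HH$ for all $0\neq F\in\kf_{(\omega,\beta)}$. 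I would first record the formulas
\[
\operatorname{Im} Z(E)=\omega\cdot c_1(E)-\beta\,\rk(E),
\]
and, in the boundary case $\omega\cdot(c_1(E)-\rk(E)B)=0$, the algebraic identity
\[
2\,\rk(E)\cdot Z(E)=\rk(E)^2\omega^2+\langle v(E),v(E)\rangle-\bigl(c_1(E)-\rk(E)B\bigr)^2,
\]
both obtained by expanding the Mukai pairing with $v(E)=(r,\ell,s)$.

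For $T\in\kt_{(\omega,\beta)}$, I decompose $0\to T'\to T\to T''\to 0$ with $T''$ torsion free. When $T''\neq 0$, the defining inequality of $\kt_{(\omega,\beta)}$ gives $\mu_\omega(T'')>\beta$, hence $\operatorname{Im} Z(T'')>0$; effectivity of $c_1(T')$ combined with ampleness of $\omega$ gives $\operatorname{Im} Z(T')\geq 0$, so $\operatorname{Im} Z(T)>0$. When $T''=0$, $T$ is torsion, $\operatorname{Im} Z(T)=\omega\cdot c_1(T)\geq 0$, and equality forces $c_1(T)=0$ by ampleness, making $T$ zero-dimensional; then $Z(T)=-\chi(T)\in\RR_{<0}$.

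For $F\in\kf_{(\omega,\beta)}$, torsion-freeness and $\mu_\omega(F)\leq\beta$ immediately give $\operatorname{Im} Z(F)\leq 0$, so $-Z(F)\in\HH$ unless $\mu_\omega(F)=\beta$. In that horizontal case the defining inequality of $\kf_{(\omega,\beta)}$ forces $F$ to be $\mu_\omega$-semistable of slope $\beta$, so $F$ admits a finite Jordan--H\"older filtration with $\mu_\omega$-stable quotients $F_j$ of slope $\beta$. Since $Z(F)=\sum_j Z(F_j)$ with each $Z(F_j)\in\RR$, it suffices to prove $Z(F_j)>0$ for every $j$. If $F_j$ is spherical, this is precisely the hypothesis of the proposition. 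If $F_j$ is not spherical, I would use that $\mu$-stability together with Serre duality on the K3 surface gives $\Hom(F_j,F_j)=k\cong\Ext^2(F_j,F_j)$, whence $\chi(F_j,F_j)\leq 2$; Mukai--Riemann--Roch then says $-\langle v(F_j),v(F_j)\rangle\leq 2$, and since $\langle v(F_j),v(F_j)\rangle$ is an even integer not equal to $-2$, one obtains $\langle v(F_j),v(F_j)\rangle\geq 0$. The Hodge index theorem applied to $c_1(F_j)-\rk(F_j)B\in\omega^\perp$ gives $(c_1(F_j)-\rk(F_j)B)^2\leq 0$. Substituting both inequalities into the boundary identity displayed above yields $2\,\rk(F_j)\,Z(F_j)\geq\rk(F_j)^2\omega^2>0$, as required.

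The main obstacle will be precisely this horizontal case $\operatorname{Im} Z(F)=0$ for $F\in\kf_{(\omega,\beta)}$: neither torsion-freeness nor the defining inequality of $\kf_{(\omega,\beta)}$ alone is sharp enough, and one is forced to split the $\mu$-stable Jordan--H\"older factors of $F$ into the generic ones (where $\langle v,v\rangle\geq 0$, handled by combining Bogomolov--Mukai with Hodge index) and the exceptional spherical ones (where $\langle v,v\rangle=-2$, handled only by the explicit hypothesis). The role of the spherical hypothesis in the proposition is exactly to absorb this residual case that the numerical Bogomolov--Mukai bound cannot control.
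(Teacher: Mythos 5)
Your proof is correct and follows essentially the same route as the paper's: reduce via the torsion pair to checking $Z(\kt_{(\omega,\beta)})\subset\overline\HH$ and $-Z(\kf_{(\omega,\beta)})\subset\overline\HH$, observe that the imaginary part $\omega\cdot c_1-\beta\,\rk$ has the right sign by the definition of the torsion theory, and in the critical horizontal case reduce to $\mu_\omega$-stable factors of slope $\beta$, where the identity $2r\,{\rm Re}(Z)=\langle v,v\rangle+r^2\omega^2-(c_1-rB)^2$ together with the Hodge index theorem and $\langle v,v\rangle\geq -2$ (with equality exactly for spherical sheaves, absorbed by the hypothesis) finishes the argument. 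Your write-up is in fact slightly more explicit than the paper's about the reduction steps (the torsion/torsion-free splitting inside $\kt_{(\omega,\beta)}$ and the Jordan--H\"older reduction inside $\kf_{(\omega,\beta)}$), which the paper leaves as "one easily reduces".
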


\begin{proof} As we will see, the assumption $Z(E)\not\in\RR_{\leq0}$ is satisfied whenever $\omega^2>2$.
For the definition of `spherical' see Section \ref{sec:autoexa}.

Since $Z$ is linear and $\ka(\omega,\beta)$ is the tilt of $\coh(X)$ with respect to
$(\kt_{(\omega,\beta)},\kf_{(\omega,\beta)})$, it suffices to show that $Z(E)\in\overline\HH$ for
$E\in\kt_{(\omega,\beta)}$ and $-Z(E)\in\overline\HH$ for $E\in\kf_{(\omega,\beta)}$.

By definition of the torsion theory,  ${\rm Im}(Z(E))=\langle (0,\omega,\beta),v(E)\rangle=(\omega.\ell)-r\beta$,
which is $\geq0$ for $E\in\kt_{(\omega,\beta)}$
and $\leq0$ for $E\in\kf_{(\omega,\beta)}$. Moreover,
if $E\in\kt_{(\omega,\beta)}$ is not torsion, then the inequality is strict.
If $0\ne E\in\coh(X)$ is supported in dimension zero, then
${\rm Im}(Z(E))=0$, but ${\rm Re}(Z(E))=-s=-h^0(E)<0$. Hence, $Z(E)\in\RR_{<0}\subset\overline\HH$.
If $0\ne E\in\kt_{(\omega,\beta)}$ has support in dimension one, then  ${\rm Im}(Z(E))>0$. So
lets consider $E\in\kf_{(\omega,\beta)}$. One easily reduces to the case
that $E$ is $\mu$-stable with slope $\mu_\omega(E)=\beta$. In particular, $(\ell-rB)\in\omega^\perp$
and thus, by Hodge index theorem, $(\ell-rB)^2\leq0$. Now one rewrites
the real part of $Z(E)$ as
$${\rm Re}(Z(E))=(B.\ell)-s-\frac{B^2-\omega^2}{2}r=\frac{1}{2r}\left((\ell^2-2rs)+r^2\omega^2-(\ell-rB)^2\right).$$
Clearly, $r^2\omega^2-(\ell-rB)^2>0$. And for the remaining summand observe
that $(\ell^2-2rs)=v(E)^2=-\chi(E,E)=-2+\ext^1(E,E)\geq-2$ for any stable $E$. Thus,
${\rm Re}(Z(E))\in \RR_{>0}$ whenever $E$ is not rigid (or $r\omega^2>2$). More precisely, it suffices
to assume $Z(E)\not\in\RR_{\leq0}$ for all spherical $E\in\coh(X)$ to ensure that $Z(E[1])\in\overline\HH$ for
all $E\in\kf_{(\omega,\beta)}$.
\end{proof}

We let $\omega$ be ample  and assume that $Z$ defines a stability function on
$\ka(\omega,\beta)$.
\begin{prop}\label{prop:discZ}
For rational $\omega,B\in{\rm NS}(X)\otimes\QQ$, the stability function $Z=\langle\exp(B+i\omega),~~\rangle$ on $\ka(\omega,\beta)$
satisfies the Harder--Narasimhan pro\-per\-ty and the local finiteness condition (cf.\ Definition \ref{dfn:locfinite}). In particular, finite Jordan--H\"older filtrations exist.
\end{prop}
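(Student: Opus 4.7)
My plan is to deduce both assertions from the discreteness of $Z$ on $K(X)$, which is forced by the rationality assumption on $(B,\omega)$. Choose $N\in\NN$ so that $NB$ and $N\omega$ lie in $\NS(X)$. Since the Mukai vector $v(E)$ lies in the integral Mukai lattice, the pairing $\langle \exp(B+i\omega),v(E)\rangle$ takes values in $\tfrac{1}{N^2}(\ZZ+i\ZZ)\subset\CC$. In particular $Z(\ka(\omega,\beta))\subset\overline\HH$ is a \emph{discrete} subset of $\CC$, and $|Z(E)|$ is bounded away from zero for every $0\ne E\in\ka(\omega,\beta)$.

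For the Harder--Narasimhan property I would appeal to Bridgeland's general criterion (Proposition~2.4 of \cite{BrAnn}), which reduces HN to two chain conditions: no infinite ascending chain of subobjects with strictly increasing phases, and no infinite descending chain of quotients with strictly decreasing phases. For a chain $E_1\subsetneq E_2\subsetneq\ldots\subseteq E$ in $\ka(\omega,\beta)$ with $\phi(E_i)<\phi(E_{i+1})$, the imaginary parts ${\rm Im}\,Z(E_i)$ are monotone and bounded, and lie in $\tfrac{1}{N^2}\ZZ$, hence eventually stabilize. From some index on, the successive quotients $E_{i+1}/E_i$ satisfy ${\rm Im}\,Z=0$, so they lie in $\kp(1)$; on this slice $-{\rm Re}\,Z$ takes positive values in $\tfrac{1}{N^2}\ZZ$, and the telescoping bound $\sum(-{\rm Re}\,Z(E_{i+1}/E_i))\leq -{\rm Re}\,Z(E)$ forces termination. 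The argument for descending chains of quotients with decreasing phases is entirely dual.

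For local finiteness and the existence of Jordan--H\"older filtrations, the same discreteness argument shows that each $\kp(\phi)$ is of finite length: a strictly descending chain of subobjects in $\kp(\phi)$ produces nonzero quotients in $\kp(\phi)$ whose $|Z|$-values sum to at most $|Z(E)|$, which contradicts the positive lower bound on $|Z|$. Jordan--H\"older filtrations are then obtained by the usual iteration inside each slice. Local finiteness in the sense of Definition~\ref{dfn:locfinite} follows because, by discreteness of $Z$, each quasi-abelian slice $\kp((\phi-\varepsilon,\phi+\varepsilon))$ for sufficiently small $\varepsilon$ sees only finitely many phases in the HN decomposition of any of its objects.

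The main obstacle will be the ``flat'' part of the chains, where ${\rm Im}\,Z$ has already stabilized and the successive factors live inside $\kp(1)\subset\ka(\omega,\beta)$. Numerical discreteness of $-{\rm Re}\,Z$ only gives termination once one knows that each nonzero factor contributes a \emph{strictly positive} amount; equivalently, $Z$ must not vanish on any nonzero object of $\ka(\omega,\beta)$. This is precisely the positivity analysis from the proof of Proposition~\ref{prop:ZEsph} (using the Hodge index theorem together with the spherical-object hypothesis). Once this lower bound is granted, the remaining steps are formal.
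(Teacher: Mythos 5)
Your overall route --- rationality of $(B,\omega)$ forces $Z(\ka(\omega,\beta))\subset\frac{1}{N^2}\ZZ[i]$ to be discrete, and Bridgeland's criterion \cite[Prop.\ 2.4]{BrAnn} then reduces the Harder--Narasimhan property to two chain conditions, with local finiteness and finite length of the slices also extracted from discreteness --- is exactly the paper's. Two small remarks on the part you did write out: for a descending chain of subobjects with increasing phases the ``flat part'' never actually occurs, since once ${\rm Im}\,Z(E_i/E_{i+1})=0$ the nonzero quotient has phase $1$ and the see-saw property for $0\to E_{i+1}\to E_i\to E_i/E_{i+1}\to0$ forces $\phi(E_i)>1$, which is already absurd; so your telescoping estimate is not needed (and its stated bound $-{\rm Re}\,Z(E)$ is not the correct one --- the relevant quantity is ${\rm Re}\,Z(E_n)-{\rm Re}\,Z(E_0)$). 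Also, local finiteness asks that $\kp(\phi-\varepsilon,\phi+\varepsilon)$ be artinian and noetherian; the right mechanism is that $Z$ maps this slice into a convex cone strictly smaller than a half-plane, so that subobjects and quotients of a fixed object land in a bounded region of a discrete set --- ``seeing only finitely many phases'' is not the same statement.

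The genuine gap is the sentence claiming that the condition on descending chains of quotients is ``entirely dual.'' It is not, and this is precisely the step the paper flags as trickier and outsources to \cite[Prop.\ 7.1]{BrK3}. Run the dual argument: for $E_1\twoheadrightarrow E_2\twoheadrightarrow\cdots$ with $\phi(E_j)>\phi(E_{j+1})$ and kernels $K_j=\ker(E_j\to E_{j+1})$, the imaginary parts ${\rm Im}\,Z(E_j)$ again decrease and stabilize, so eventually ${\rm Im}\,Z(K_j)=0$ and the nonzero $K_j$ have phase $1$. But the see-saw property for $0\to K_j\to E_j\to E_{j+1}\to0$ now only yields $1=\phi(K_j)>\phi(E_j)$, which is no contradiction at all: the chain can continue indefinitely with ${\rm Re}\,Z(E_j)$ increasing by at least $\frac{1}{N^2}$ at each step and $\phi(E_j)$ decreasing towards $0$, and discreteness alone does not forbid the values from marching off to $+\infty$ along a horizontal line. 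Ruling this out requires a non-formal input --- Bridgeland uses the noetherianity of $\coh(X)$ together with an analysis of the cohomology sheaves $H^{-1}(E_j)$ and $H^0(E_j)$ of the objects in the chain. As written, your argument establishes only one of the two chain conditions, hence only half of the Harder--Narasimhan property.
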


\begin{proof}
In order to show the existence of Harder--Narasimhan filtrations for all objects in $\ka:=\ka(\omega,\beta)$, 
it suffices to show that any descending filtration $\ldots\subset E_{i+1}\subset E_i\subset\ldots \subset E_0=E$
with increasing phases $\phi(E_{i+1})>\phi(E_i)>\ldots>\phi(E_0)$ stabilizes   and similarly for chains of quotients
(cf.\ \cite[Prop.\ 2.4]{BrAnn}).

Since $B$ and $\omega$ are rational, the image $Z(\ka)\subset \CC$ is discrete. Indeed, if $B,\omega\in\frac{1}{m}{\rm NS}(X)$, then $m^2Z(\ka)\subset\ZZ[i]$. Using linearity of $Z$, one deduces that
${\rm Im}(Z(E_{i+1}))\leq{\rm Im}(Z(E_i))$ for all $i$ and hence, by discreteness of $Z(\ka)$, ${\rm Im}(Z(E_i))\equiv {\rm const}$ for
$i\gg0$. But then ${\rm Im}(Z(E_i/E_{i+1}))=0$ which is a contradiction.
\vskip0.5cm
$$\hskip-3cm
\begin{picture}(-100,100)
\put(10,25){\vector(0,1){60}}
\put(10,25){\vector(1,2){40}}
\put(10,25){\vector(-1,1){20}}
\put(10,25){\vector(-2,1){40}}
\put(10,25){\vector(-4,1){80}}
{  \linethickness{0.015mm}
\put(10,25){\vector(1,0){40}}}
\put(55,22){\tiny\mbox{$Z(E_i/E_{i+1})$}}
\put(54,102){\tiny\mbox{$Z(E)$}}
\put(-85,50){\tiny\mbox{$Z(E_{i+1})$}}
\put(-40,50){\tiny\mbox{$Z(E_{i})$}}
\put(10,45){\line(-1,0){100}}
\put(10,25){\line(-1,0){100}}
\end{picture}
$$
The argument to deal with chains of quotients is trickier, see \cite[Prop.\ 7.1]{BrK3}.
The local finiteness is again elementary (see \cite[Lem.\ 4.4]{BrK3}): Fix any $\varepsilon<\frac{1}{2}$.
Then for any $\phi$ the image of $Z:\kp(\phi-\varepsilon,\phi+\varepsilon)\to\CC$ will be contained in a region
strictly smaller than a half plane. Thus, subobjects and quotients of a fixed $E\in\kp(\phi-\varepsilon,\phi+\varepsilon)$
will have image in a bounded region, which for $Z$ discrete implies the finiteness of ascending and descending chains.

\vskip0.5cm
$$\hskip-3cm
\begin{picture}(100,100)
\put(10,25){\line(1,1){80}}
\put(10,25){\line(2,1){80}}
\put(60,90){\line(-2,-1){23}}
\put(10,25){\line(1,2){40}}
\put(60,90){\line(-1,-2){26.5}}
\put(25,35){\tiny\mbox{$\varepsilon$}}
\put(90,110){\tiny\mbox{$\kp(\phi)$}}

\put(22,27){\tiny\mbox{$\phi$}}
\put(10,25){\qbezier(8,8)(11.2,6)(11.5,0)}
\put(10,25){\qbezier(12,12)(14,10)(15,7.5)}
{  \linethickness{0.0015mm}
\put(10,25){\line(1,0){100}}}
\put(60,90){\circle*{2}}
\put(57,93){\tiny\mbox{$Z(E)$}}
\end{picture}
$$
\vskip-1cm
\end{proof}

\begin{cor}\label{cor:constrstable}
If $\omega,B\in{\rm NS}(X)\otimes\QQ$ with $\omega$ ample and such that $Z(E)\not\in\RR_{\leq0}$ for
all spherical $E\in\coh(X)$ (e.g.\ $\omega^2>2$), then $Z$ defines a stability condition
on $\ka(\omega,\beta)$ and, therefore, a stability condition on $\Db(X)$ with heart $\ka(\omega,\beta)$.\qqed
\end{cor}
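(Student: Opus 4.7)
The plan is to assemble the corollary directly from the two preceding propositions, since all the real work has already been done. First I would invoke Proposition \ref{prop:ZEsph}: the hypothesis that $\omega$ is ample and that $Z(E)\notin\RR_{\leq 0}$ for every spherical $E\in\coh(X)$ is precisely what is required there, so $Z=\langle\exp(B+i\omega),v(-)\rangle$ defines a stability function on the tilted heart $\ka(\omega,\beta)$, i.e.\ $Z(E)\in\overline\HH$ for every $0\ne E\in\ka(\omega,\beta)$. The parenthetical remark that $\omega^2>2$ suffices follows from the bound $v(E)^2\geq -2$ for spherical $E$ used inside the proof of Proposition \ref{prop:ZEsph}, combined with the Hodge index estimate $(\ell-rB)^2\leq 0$ on $\mu_\omega$-semistables of slope $\beta$.

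Next I would apply Proposition \ref{prop:discZ}: since $\omega,B\in \NS(X)\otimes\QQ$, the image $Z(\ka(\omega,\beta))\subset\CC$ is discrete, so $Z$ has the Harder--Narasimhan property and is locally finite (in the sense of Definition~\ref{dfn:locfinite}). Together with the previous step, this yields a stability condition $(\kp,Z)$ on the abelian category $\ka(\omega,\beta)$ in the sense of the abelian version of the definition.

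Finally, $\ka(\omega,\beta)$ is by construction the heart of a bounded t-structure on $\Db(X)$, since it is the tilt of $\coh(X)$ with respect to the torsion pair $(\kt_{(\omega,\beta)},\kf_{(\omega,\beta)})$ (this is the HRS proposition already quoted in Section~\ref{sec:tilts}). Hence, via the natural bijection between stability conditions on $\kd$ and pairs consisting of a bounded t-structure on $\kd$ together with a stability condition on its heart (the triangulated analogue of Proposition~\ref{prop:boundedslicing}), the datum $(\kp,Z)$ extends uniquely to a stability condition on all of $\Db(X)$ whose heart is $\ka(\omega,\beta)$.

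In this assembly there is essentially no obstacle left: the only nontrivial inputs, namely the positivity $Z\in\overline\HH$ on the tilt and the HN/local-finiteness package, have been established in Propositions \ref{prop:ZEsph} and \ref{prop:discZ}, and the passage from the abelian to the triangulated setting is formal. The only minor point to double-check is that the numericality of $Z$ (so that the resulting stability condition is numerical in the sense of the earlier remark) is automatic from the definition $Z(E)=\langle\exp(B+i\omega),v(E)\rangle$, since $v(E)$ depends only on the class of $E$ in $N(\Db(X))$.
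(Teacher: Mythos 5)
Your proposal is correct and is exactly the argument the paper intends: the corollary is stated with a \qqed precisely because it is the immediate assembly of Proposition \ref{prop:ZEsph} (stability function on the tilt), Proposition \ref{prop:discZ} (Harder--Narasimhan property and local finiteness from rationality of $\omega,B$), and the bijection between stability conditions on $\Db(X)$ and bounded t-structures equipped with a stability condition on the heart. Your closing remark on numericality is also accurate, since $Z$ factors through the Mukai vector and hence through $N(X)$.
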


In fact, the same result holds for real $\omega,B\in{\rm NS}(X)\otimes\RR$, but this is proved a
posteriori, after a detailed analysis of the component of ${\rm Stab}(X)$ containing the stability conditions with rational $Z$ constructed above.

\section{The topological space of stability conditions}

This lecture is devoted to fundamental results of Bridgeland concerning the 
topological structure of the space of stability conditions. Since a sta\-bility condition
consists of a slicing and a stability function, the natural reflex is to construct
topologies on the the space of slicings and on the space of stability functions separately
(the latter being a linear space) and use the product topology. This is roughly what happens, but details
are intricate. We will only discuss the main aspects, in Section \ref{sec:topslic} for the space of
slicings and in Section \ref{sec:topstab} for ${\rm Stab}(\kd)$. The main result about the projection
from stability conditions to stability functions being a local homeomorphisms can be found in Section
\ref{sec:main}.

\subsection{Topology of ${\rm Slice}(\kd)$}\label{sec:topslic} As before, $\kd$ will denote a triangulated
ca\-te\-gory.
The easier case of an abelian category is left  to the reader.
Recall that a slicing $\kp$ of $\kd$ consists of full additive subcategories
$\kp(\phi)\subset\kd$, $\phi\in\RR$ satisfying the conditions i)-iii) in Definition \ref{def:slic}.
In particular, any object $E\in\kd$ has a filtration by exact triangles with
factors $A_1\in\kp(\phi_1),\ldots,A_n\in\kp(\phi_n)$ such that $\phi_1>\ldots>\phi_n$. We
will also write $\phi^+(E):=\phi_1$ and $\phi^-(E):=\phi_n$ (or, if the dependence
on the given slicing needs to be stressed, $\phi^\pm_\kp(E)$). With this notation, $\kp(I):=\{E\in\kd~|~\phi^{\pm}(E)\in I\}$ for any interval $I\subset\RR$. The set of all slicings of $\kd$ is denoted by ${\rm Slice}(\kd)$.

\begin{definition}
For $\kp,\kq\in{\rm Slice}(\kd)$ one defines
$$d(\kp,\kq):={\rm sup}\{|\phi_\kp^\pm(E)-\phi_\kq^\pm(E)|~|~0\ne E\in\kd\}.$$
\end{definition}
Obviously, $d(\kp,\kq)\in[0,\infty]$.

\smallskip
\noindent
{\bf Claim 1:} With this definition, $d(~~,~~)$ is a generalized metric, i.e.\ it satisfies all the axioms for a distance function
except that the distance can be $\infty$.

\begin{proof}
The symmetry and the triangle inequality are straightforward to check. Suppose $d(\kp,\kq)=0$.
Then $E\in\kp(\phi)$, which is equivalent to $\phi^\pm_\kp(E)=\phi$, implies $\phi_\kq^\pm(E)=\phi$ and
thus $E\in\kq(\phi)$. Therefore, $\kp(\phi)\subset\kq(\phi)$. Reversing the role of $\kp$ and $\kq$ yields $\kp=\kq$.
\end{proof}

Thus, $d(~~,~~)$ defines a topology on ${\rm Slice}(\kd)$. Note that two slicings $\kp,\kq$ with
$d(\kp,\kq)=\infty$ are contained in different connected components. Indeed, 
$U_\kp:=\{\kr~|~d(\kp,\kr)<\infty\}$ is by definition an open neighourhood of $\kp$.
An open neighbourhood $U_\kq$ of $\kq$ is defined similarly.  Then use the triangle inequality to
show that $U_\kp\cap U_\kq=\emptyset$ and that ${\rm Slice}(\kd)\setminus U_\kp$ is open.
Hence, $U_\kp$ and $U_\kq$ are connected components of ${\rm Slice}(\kd)$.

\smallskip
\noindent
{\bf Claim 2:} $d(\kp,\kq)=\inf\{\varepsilon~|~\kq(\phi)\subset\kp[\phi-\varepsilon,\phi+\varepsilon]~\forall \phi\in\RR\}$.

\smallskip

This is best done as an exercise, but here is the argument anyway: Lets write $d:=d(\kp,\kq)$ and  $\delta:=\inf\{~~\}$.
The inequality $d\geq\delta$ is straightforward. For the other direction, one
has to show that $|\phi^\pm_\kp(E)-\phi^\pm_\kq(E)|\leq\delta$ for all $E\in\kd$. Consider the Harder--Narasimhan
filtration of $E$ with respect to $\kq$ and denote its semistable factors by $A_1,\ldots, A_n$. Then
$\phi^+_\kq(E)=\phi_\kq(A_1)$ and $\phi^-_\kq(E)=\phi_\kq(A_n)$. By definition of $\delta$, one has 
$\phi^+_\kp(A_i)\leq\phi_\kq(A_i)+\delta$ and $\phi^-_\kp(A_i)\geq\phi_\kq(A_i)-\delta$. But $\phi^+_\kp(E)\leq\max\{\phi^+_\kp(A_i)\}$ and $\phi^-_\kp(E)\geq\min\{\phi_\kp^-(A_i)\}$. Hence, $\phi^+_\kp(E)\leq\phi^+_\kq(E)+\delta$
and $\phi^-_\kp(E)\geq\phi^-_\kq(E)-\delta$. Continuing along these lines eventually yields
$|\phi^\pm_\kp(E)-\phi^\pm_\kq(E)|\leq\delta$ and, therefore, $d\leq \delta$.

\begin{prop} If for  stability conditions
$\sigma=(\kp,Z)$ and $\tau=(\kq,W)$ one has $W=Z$ and $d(\kp,\kq)<1$, then $\sigma=\tau$.
See \cite[Lem.\ 6.4]{BrAnn}.
\end{prop}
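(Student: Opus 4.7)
The plan is to fix $0\neq E\in\kq(\phi)$ and show $E\in\kp(\phi)$; by symmetry of $d$ this will give $\kp(\phi)=\kq(\phi)$ for every $\phi$ and hence $\sigma=\tau$. Let $A_1,\ldots,A_n\in\kp(\phi_i)$ with $\phi_1>\cdots>\phi_n$ be the $\kp$-Harder--Narasimhan factors of $E$. By the characterization of $d$ in Claim~2 and the hypothesis $d(\kp,\kq)<1$, each $\phi_i$ lies in the open interval $(\phi-1,\phi+1)$. Using $W=Z$ and additivity of $Z$ on triangles, setting $\theta_i:=\phi_i-\phi\in(-1,1)$ and $r_i:=|Z(A_i)|>0$ yields
\[
\sum_{i=1}^n r_i\,e^{i\pi\theta_i}=r>0.
\]
The aim is to deduce $\theta_i=0$ for every $i$; the strict ordering of the $\phi_i$ then forces $n=1$ and $E=A_1\in\kp(\phi)$.

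When all $\theta_i$ share a sign the conclusion is immediate. If $\theta_i\geq 0$ for every $i$, then $\sin(\pi\theta_i)\geq 0$ with equality only at $\theta_i=0$ (using $\theta_i<1$ strictly), so vanishing of the imaginary part of the displayed sum forces each $\theta_i=0$. The case $\theta_i\leq 0$ is symmetric.

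The substantive case is the mixed one in which some $\theta_i>0$ and some $\theta_j<0$, where algebraic cancellation in the imaginary part is a priori possible. I would handle it by picking $k$ maximal with $\theta_k>0$ and extracting the exact triangle $E_k\to E\to F$ from the first $k$ steps of the $\kp$-HN of $E$, so that all $\kp$-HN factors of $E_k$ have phase $>\phi$ and all $\kp$-HN factors of $F$ have phase $\leq\phi$ with at least one strictly less than $\phi$. Both $E_k\to E$ and $E\to F$ are nonzero by Remark~\ref{rem:useful}(iii) (the second by composing with the nonzero projection $F\to A_n$), and $\Hom(E_k,F)=0$ by phase comparison. Applying Remark~\ref{rem:useful}(i) to these two nonzero maps then supplies a $\kq$-HN factor of $E_k$ of $\kq$-phase $\leq\phi$ and a $\kq$-HN factor of $F$ of $\kq$-phase $\geq\phi$.

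The main obstacle is closing the argument from these ingredients: the vectors $Z(E_k)e^{-i\pi\phi}$ and $Z(F)e^{-i\pi\phi}$ lie respectively in the open upper and open lower half-planes, and one must combine this with the $\kq$-HN information and $d(\kp,\kq)<1$ to produce a contradiction. My expectation is that the endgame goes by iteratively applying the single-sign dichotomy of the preceding paragraph to the $\kq$-HN filtrations of $E_k$ and $F$, using the distance bound to confine the relevant phases to arcs short enough that cancellation is impossible and the existence of a $\kq$-HN factor on the ``wrong'' side of $\phi$ is ruled out. This recursive convexity step is the principal technical difficulty; once in place, the mixed case is contradictory and $E\in\kp(\phi)$ as desired.
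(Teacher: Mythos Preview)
Your single-sign cases are fine and amount to the paper's cases i) and ii): if $E\in\kq(\phi)$ has all $\kp$-HN phases in $(\phi,\phi+1)$ (resp.\ $(\phi-1,\phi)$), then $Z(E)$ lies strictly to the left (resp.\ right) of the ray $\exp(i\pi\phi)\cdot\RR_{>0}$, contradicting $W=Z$.

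The mixed case, however, is genuinely incomplete, and your proposed ``recursive convexity'' is not the right mechanism; there is no well-founded induction in sight, and the information you extract (a $\kq$-factor of $E_k$ of phase $\leq\phi$ and one of $F$ of phase $\geq\phi$) does not by itself produce a contradiction. The paper closes this case by a short $\Hom$-vanishing argument rather than numerics. In your notation: from $E_k\in\kp(\phi,\phi+1)$ and $Z=W$ one sees $E_k\notin\kq(\leq\!\phi)$ (otherwise $E_k\in\kq(\phi-1,\phi]$ by the distance bound, putting $Z(E_k)$ on the wrong side of the ray). Hence the top $\kq$-HN factor $C$ of $E_k$ has $\kq$-phase $\psi>\phi$, with a nonzero $C\to E_k$. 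Since $E\in\kq(\phi)$ and $\psi>\phi$, the composite $C\to E_k\to E$ vanishes, so $C\to E_k$ factors through $F[-1]$. But $F[-1]\in\kp(\phi-2,\phi-1]\subset\kq(\phi-3,\phi]$ by the distance bound, whence $\Hom(C,F[-1])=0$ as $\psi>\phi$. This is the needed contradiction. (The paper runs the identical argument with the roles of $\kp$ and $\kq$ exchanged, starting from $E\in\kp(\phi)$ and splitting via the $\kq$-HN filtration.) Replace your recursive sketch with this two-line factorization trick and the proof is complete.
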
 
Note that the strict inequality is needed, as e.g.\ $d(\kp,\kp[1])=1$.
$$\hskip-3cm
\begin{picture}(-100,100)
\put(20,80){\small\mbox{${\rm Stab}(\kd)$}}
\put(60,80){\small\mbox{$\subset K^*\times{\rm Slice}(\kd)$}}
\put(35,70){\vector(0,-1){50}}
\put(-67.6,35){\vector(0,-1){15}}
\put(-85,10){\line(1,0){50}}
\put(-68,71.5){\circle*{2}}
\put(-68,41.7){\circle*{2}}
\put(-90,42){\line(0,1){13}}
\put(-94.8,55.2){\tiny\mbox{$<$}}
\put(-90.2,42){\line(1,0){2}}
\put(-90.2,71.5){\line(1,0){2}}
\put(-90,71.5){\line(0,-1){13}}
\put(-66,44){\tiny\mbox{$\sigma$}}
\put(-66,76){\tiny\mbox{$\tau$}}
\put(-68,10){\circle*{2}}
\put(-68,0){\tiny\mbox{$Z$}}
\put(-108,55){\tiny\mbox{$1\leq$}}
\put(-50,80){\qbezier(-16.3,-8.3)(-3.3,-5.3)(0,10)}
\put(-50,80){\qbezier(-32,-25)(-30,-9.6)(-16.3,-8.3)}
\put(-50,80){\qbezier(-16.3,-38)(-30,-40)(-32,-25)}
\put(-50,80){\qbezier(0,-50)(-5,-39)(-16.3,-38.1)}
\put(20,5){\small\mbox{$K^*={\rm Hom}(K,\CC)$}}
\end{picture}
$$
\vskip0.5cm

\begin{proof}
Suppose there exists a $E\in\kp(\phi)\setminus\kq(\phi)$.

i) If $E\in\kq(>\!\phi)$, then by assumption $E\in\kq(\phi,\phi+1)$, i.e.\ $W(E)$ is contained in the half plane
left to $Z(E)$ which contradicts $W=Z$.

$$\hskip-9cm
\begin{picture}(50,100)

{  \linethickness{0.055mm}
\put(185,20){\line(1,0){80}}
\put(183,20){\circle{4}}}
{ \linethickness{0.005mm}
\put(184.33,18.5){\line(1,-1){40}}
}
\put(188,32){\tiny\mbox{$\phi$}}
\put(182,30){
\qbezier(-6.3,-2.3)(-3.3,0.3)(0,0)}
\put(182,20){\oval(20,20)[rt]}
{  \linethickness{0.015mm}
\put(181.5,21.5){\vector(-1,1){50}}
\put(115,75){\tiny\mbox{$Z(E)$}}
\put(100,38){\tiny\mbox{$W(E)$}}
\put(115,45){\circle*{2}}
\put(222.5,-20){\line(-1,0){48}}
\put(212.5,-10){\line(-1,0){48}}
\put(202.5,0){\line(-1,0){48}}
\put(192.5,10){\line(-1,0){48}}
\put(181,20){\line(-1,0){48}}
\put(172.5,30){\line(-1,0){48}}
\put(162.5,40){\line(-1,0){42}}
\put(152.5,50){\line(-1,0){48}}
}
\end{picture}
$$
\smallskip
\vskip1cm
ii) For $E\in\kq(<\phi)$ the argument is similar.

iii) The remaining case is $E\in\kq(\phi-1,\phi+1)$. Then there exists an exact triangle
$$A\to E\to B$$ with $A\in\kq(\phi,\phi+1)$ and $B\in\kq(\phi-1,\phi]$. If $A\in\kp(\leq\!\phi)$, then
$A\in\kp(\phi-1,\phi]$, which leads to the same contradiction as above. Thus, there exists
$0\ne C\in\kp(\psi)$ with $\psi>\phi$ and a non-trivial morphism $C\to A$ which factors via $B[-1]$ as
$E\in\kp(\phi)$: 
$$\xymatrix{&C\ar[d]^{\tiny\ne0}\ar@{..>}[dr]^{=0}\ar@{..>}[dl]_g&&\\
B[-1]\ar[r]&A\ar[r]\ar[r]&E\ar[r]&B}$$
But now $B[-1]\in\kq(\phi-2,\phi-1]\subset\kp(\phi-3,\phi]$ and, therefore, $g=0$. Contradiction.
\end{proof}

\subsection{Topology of ${\rm Stab}(\kd)$}\label{sec:topstab}
Let ${\rm Stab}(\kd)$ be the space of stability conditions $\sigma=(\kp,Z)$ for which
$Z$ factors via  a fixed quotient $K(\kd)\twoheadrightarrow \Gamma$. For simplicity, we will usually 
assume $\Gamma\cong\ZZ^n$ and write $\Gamma^*=\Hom(\Gamma,\CC)$.
Thus, $${\rm Stab} (\kd)\subset\Gamma^*\times{\rm Slice}(\kd).$$
It is then tempting, in particular when $\Gamma\cong\ZZ^n$, to endow ${\rm Stab}(\kd)$ with
the product topology. This is essentially what will happen, but not quite. Bridgeland   defines
a topology on ${\rm Stab}(\kd)$ such that for any connected component
${\rm Stab}(\kd)^{\rm o}$ there exists a linear topological subspace $V^{\rm o}\subset\Gamma^*$
such that ${\rm Stab}(\kd)^{\rm o}\subset {\rm Slice}(\kd)\times V^{\rm o}$ with the induced topology.
In particular, if $\Gamma$ is of finite rank, then indeed the topology on each
connected component of ${\rm Stab}(\kd)$ is induced by the product topology. 

For a fixed stability condition $\sigma=(Z,\kp)\in{\rm Stab}(\kd)$ 
a genera\-lized norm $\|~~\|_\sigma:\Gamma^*\to[0,\infty]$ is defined by 
by $$\|U\|_\sigma:=\sup\left\{\frac{|U(E)|}{|Z(E)|}~|~E~\sigma\text{-semistable}\right\}.$$
Then let $V_\sigma:=\{U\in\Gamma^*~|~\|U\|_\sigma<\infty\}$.
It is easy to check that $\|~~\|_\sigma$ is indeed a generalized norm which becomes a finite norm
on $V_\sigma$. For example, if $\|U\|_\sigma=0$, then $U(E)=0$ for all $\sigma$-semistable $E$ and,
using the existence of the Harder--Narasimhan filtration for all objects in $\kd$, this
implies $U=0$.

\begin{remark}
The approach of Kontsevich and Soibelman (cf.\ \cite{KoSo}) to the topology on the space of stability conditions is slightly
different. Roughly, the notion in \cite{KoSo} is stronger and leads to Bridgeland stability conditions
with maximal $V_\sigma=\Gamma^*$.  However, in the case of the stability conditions on K3 surfaces
that have been introduced earlier and that will be studied further in the next section, both notions coincide a posteriori.

Here are a few details. In \cite{KoSo},
a stability condition $\sigma=(\kp,Z)$ is required to admit 
a quadratic form $Q$ on $\Gamma\otimes\RR$ such that: 

i) $Q$ is negative definite on ${\rm Ker}(Z)$ and 

ii) If $|\phi^+(E)-\phi^-(E)|<1$, then $Q([E])\geq0$. (In fact, this is only a weak version of what is really
 required in \cite{KoSo}.)

In particular: iii) all semistable $0\ne E$ satisfy $Q([E])\geq0$.

Note that i) and iii) together are equivalent to the existence of a constant $C$ such that for all semistable
$0\ne E$ one has $$\|[E]\|< C\cdot |Z(E)|,$$ which is called the \emph{support property}. (Here, $\|~~\|$ is an arbitrary norm on $\Gamma\otimes\CC$ which is assumed to be finite dimensional.)

The support property automatically implies $V_\sigma=\Gamma^*$. Indeed, 
since $U\in\Gamma^*$ is in particular continuous, there exists a constant $D$ such that
$|U(\alpha)|\leq D\cdot\|\alpha\|$ for all $\alpha\in\Gamma$. Combined with the support property, it shows
that for any semistable $0\ne E$ one has $|U(E)|<C\cdot D\cdot|Z(E)|$ and hence $\|U\|_\sigma<C\cdot D$.

The actual condition ii) in \cite{KoSo} implies the finiteness condition that we have avoided so far, see Definition
\ref{dfn:locfinite}. Indeed, the support property implies that $Z(\Gamma)\subset \CC$ is discrete, see
\cite[Sect.\ 2.1]{KS}, which in turn implies local finiteness  (cf.\ Remark \ref{rem:discZ}).

\end{remark}

Now, in order to define a topology on ${\rm Stab}(\kd)$ Bridgeland considers
for $\sigma=(\kp,Z)$ the set
$$B_\varepsilon(\sigma):=\{\tau=(\kq,W)~|~\|W-Z\|_\sigma<\sin(\pi\varepsilon),~d(\kp,\kq)<\varepsilon\}.$$

\begin{remark} The two inequalities are compatible in the following sense. If
$E$ is $\sigma$-semistable, then the condition $\|W-Z\|_\sigma<\sin(\pi\varepsilon)$
implies $|\phi_\tau(E)-\phi_\sigma(E)|<\varepsilon$, which confirms $d(\kp,\kq)<\varepsilon$.
Indeed, if $\varphi$ is as below, then $\sin(\pi\varphi)\leq\frac{|(W-Z)(E)|}{|Z(E)|}<\sin(\pi\varepsilon)$.
$$\hskip-3cm
\begin{picture}(-10,100)
\put(2,30){\qbezier(-6.3,5.3)(-2,1.3)(-1.2,-3.1)}
{\linethickness{0.05mm}\put(10,10){\qbezier(32,20)(20,35)(35,45.1)}
\put(46,55){\vector(2,1){1}}
\put(10,10){\qbezier(38.2,33)(37,35.7)(40.6,37)}}
\put(48.6,43.2){\tiny\mbox{$\cdot$}}
\put(-20,20){\vector(1,1){60}}
\put(-20,20){\vector(3,1){90}}
\put(39.8,80){\line(1,-3){12}}
\put(39.8,80){\line(1,-1){30}}
\put(-10,27){\tiny\mbox{$\pi\varphi$}}
\put(35,24){\tiny\mbox{$\sin(\pi\varphi)$}}
\put(10,82){\tiny\mbox{$\frac{Z(E)}{|Z(E)|}$}}
\put(57,67){\tiny\mbox{$\frac{|(W-Z)(E)|}{|Z(E)|}$}}
\put(68,40){\tiny\mbox{$\frac{W(E)}{|Z(E)|}$}}
\end{picture}
$$
\end{remark}

\smallskip
\noindent
{\bf Claim:} The sets $B_\varepsilon(\sigma)$ form the basis of a topology which on each connected component
${\rm Stab}(\kd)^{\rm o}$
coincides with the topology induced by the product topology under the inclusion ${\rm Stab}(\kd)\subset\Gamma^*\times{\rm Slice}(\kd)$.

\smallskip

The main technical point in the proof, is that for $\tau\in B_\varepsilon(\sigma)$
one always finds $C_1,C_2>0$ such that $C_1\cdot\|~~\|_\sigma\leq\|~~\|_\tau\leq C_2\cdot\|~~\|_\sigma$, i.e.\
$\|~~\|_\sigma\sim\|~~\|_\tau$. See \cite[Lem.\ 6.2]{BrAnn}.

Note that $V_\sigma=V_\tau$ for $\sigma,\tau$ contained in the same connected component
${\rm Stab}(\kd)^{\rm o}$.

\subsection{Main result}\label{sec:main}

We now come to the main result of \cite{BrAnn}.
Consider a connected component ${\rm Stab}(\kd)^{\rm o}\subset{\rm Stab}(\kd)$ of the space of locally finite (see
Definition \ref{dfn:locfinite}) stability conditions on $\kd$ with stability functions that factor via a fixed quotient $K(\kd)\twoheadrightarrow \Gamma$.
Let $V^{\rm o}\subset\Gamma^*$ be the associated linear space (i.e.\ $V^{\rm o}=V_\sigma$ for any $\sigma\in{\rm Stab}(\kd)^{\rm o}$).

\begin{thm}\label{thm:mainlocalhomeo}{\bf (Bridgeland)} 
The natural projection ${\rm Stab}(\kd)^{\rm o}\to V^{\rm o}$ is a local homeomorphism.
\end{thm}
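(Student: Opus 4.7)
The plan is to exhibit, around each $\sigma=(\kp,Z)\in{\rm Stab}(\kd)^{\rm o}$, a small ball on which $\pi$ restricts to a homeomorphism onto an open subset of $V^{\rm o}$. Continuity of $\pi$ is built into the definition of the topology on ${\rm Stab}(\kd)$: if $\tau=(\kq,W)\in B_\varepsilon(\sigma)$ then by definition $\|W-Z\|_\sigma<\sin(\pi\varepsilon)$, which, combined with the norm-equivalence $\|\cdot\|_\sigma\sim\|\cdot\|_\tau$ valid throughout the component, bounds $W-Z$ in $V^{\rm o}$. The remaining work is to construct a local inverse, prove it is continuous, and check injectivity.

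For injectivity, fix $\varepsilon<1/2$. If $\tau_1=(\kq_1,W)$ and $\tau_2=(\kq_2,W)$ both lie in $B_\varepsilon(\sigma)$, then the triangle inequality gives $d(\kq_1,\kq_2)\le d(\kq_1,\kp)+d(\kp,\kq_2)<2\varepsilon<1$; by the uniqueness proposition proved in Section \ref{sec:topslic} (equality of stability functions plus $d<1$ forces equality of the stability conditions) we conclude $\tau_1=\tau_2$. Hence $\pi|_{B_\varepsilon(\sigma)}$ is injective as soon as $\varepsilon<1/2$.

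For local surjectivity, given $W\in V^{\rm o}$ with $\|W-Z\|_\sigma<\sin(\pi\varepsilon)$ for $\varepsilon$ small, I want to construct $\tau=(\kq,W)\in B_\varepsilon(\sigma)$ by tilting $\sigma$ inside thin slices. First, shrink $\varepsilon$ so that $2\varepsilon<1$; the slice $\kp((\psi-\varepsilon,\psi+\varepsilon))$ is then a quasi-abelian subcategory of $\kd$, and by local finiteness of $\sigma$ it is of finite length. Next, observe that the estimate $\|W-Z\|_\sigma<\sin(\pi\varepsilon)$ guarantees that for every $\sigma$-semistable $E$ of phase $\phi$ one has $|\arg W(E)-\arg Z(E)|/\pi<\varepsilon$ (this is exactly the geometric content of the remark following the definition of $B_\varepsilon(\sigma)$). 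Consequently the restriction of $W$ to the thin slice takes values in an open half-plane and, after a constant rotation, defines a stability function on the quasi-abelian category $\kp((\psi-\varepsilon,\psi+\varepsilon))$. Finite length delivers Harder--Narasimhan filtrations with respect to this restricted $W$, and I define $\kq(\phi)$, for $\phi\in(\psi-\varepsilon,\psi+\varepsilon)$, to be the $W$-semistable objects of that phase inside the thin slice; the requirement $\kq(\phi+1)=\kq(\phi)[1]$ then determines $\kq$ on all of $\RR$, and uniqueness of HN filtrations ensures the local definitions for different $\psi$ agree on overlaps. Assembling the $\kq$-HN filtration of an arbitrary $E\in\kd$ by refining its $\sigma$-HN filtration inside each thin slice shows that $(\kq,W)$ is a slicing of $\kd$, that the pair is a stability condition, and that $d(\kp,\kq)<\varepsilon$, placing $\tau$ in $B_\varepsilon(\sigma)$. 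Finally, continuity of the inverse $W\mapsto\tau(W)$ follows from the same HN-refinement estimate: nearby $W,W'$ give rise to slicings $\kq,\kq'$ with $d(\kq,\kq')$ controlled by $\|W-W'\|_\sigma$.

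The main obstacle is the construction in the third paragraph, and more specifically the existence of the HN filtrations with respect to the new function $W$ on the thin slices. This is where the local finiteness of $\sigma$ (Definition \ref{dfn:locfinite}, alluded to throughout) is essential: without it the quasi-abelian slice $\kp((\psi-\varepsilon,\psi+\varepsilon))$ need not be of finite length, the HN process for $W$ need not terminate, and the construction breaks down (cf.\ the pathological Example \ref{ex:patho}). A secondary delicate point is verifying that the slicings produced on overlapping thin slices agree, so that the local construction genuinely defines a global slicing on $\kd$; this rests on the uniqueness part of the HN filtration and on the norm comparison $\|\cdot\|_\sigma\sim\|\cdot\|_\tau$ recorded at the end of Section \ref{sec:topstab}.
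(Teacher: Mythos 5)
Your outline is correct and follows essentially the same route the paper indicates (and defers to Bridgeland's original article for): injectivity via the proposition that equal stability functions plus $d(\kp,\kq)<1$ force equality, and local surjectivity via the key deformation lemma, whose proof tilts inside thin slices $\kp((\psi-\varepsilon,\psi+\varepsilon))$ and uses local finiteness to run the Harder--Narasimhan process for the deformed function there. The only caveat is quantitative: for the image of a thin slice under $W$ to lie in an open half-plane you need the slices and the perturbation to be small enough that the total angular spread stays below $\pi$ (Bridgeland takes $\varepsilon<1/8$), so ``$2\varepsilon<1$'' must be tightened, as your repeated ``for $\varepsilon$ small'' implicitly allows.
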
 

In fact, Bridgeland proves that for small $\varepsilon$ the projection yields a homeomorphism
 $B_\varepsilon(\sigma)\congpf B_{\sin(\pi\varepsilon)}(Z)$.

\begin{definition}\label{dfn:locfinite}
A stability condition $\sigma$ is \emph{locally finite} if there exists a $0<\eta$ such that
$\kp(\phi-\eta,\phi+\eta)$ is a category of finite type for all $\phi\in\RR$.
\end{definition} 
A category is of finite type if its artinian and noetherian, i.e.\ descending and ascending chains of
subobjects stabilize. In particular, all abelian $\kp(\phi)$ are of finite type which is equivalent to 
saying that any semistable object has a finite filtration with stable quotients.
 
The condition `locally finite' is needed when it comes to proving the main lemma
that says that for small $\varepsilon$ the inequality $\|W-Z\|_\sigma<\sin(\pi\varepsilon)$ implies
that there exists a stability condition $\tau=(\kq,W) \in B_\varepsilon(\sigma)$, i.e.\ such that
$d(\kp,\kq)<\varepsilon$. For the details of the proof we have to refer to the original \cite{BrAnn}.
  
 \begin{remark}\label{rem:discZ}
 The local finiteness of a stability condition holds automatically when the image of $Z$ is discrete
 \cite[Lem.\ 4.4]{BrK3}, cf.\ proof of Proposition \ref{prop:discZ}.
  
 \emph{Warning:} If a stability function $Z$ on the heart of a bounded t-structure is given, then knowing that
 the image of $Z$ discrete does not automatically give that $Z$ has the Harder--Narasimhan property.
 \end{remark}
 
 Examples of stability conditions that are not locally finite are easily constructed, see e.g.\ Example \ref{ex:patho}. 
 
 \begin{remark}
 A connected component ${\rm Stab}(\kd)^{\rm o}$ is called \emph{full} if $V^{\rm o}=\Gamma^*$.
 Then in \cite{Wolf2} it is shown that
 any full component is complete with respect to the metric
 $$d(\sigma,\tau):={\rm sup}_{0\ne E\in\kd}\left\{|\phi_\kp^\pm(E)-\phi_\kq^\pm(E)|,~|\log\frac{m_\sigma(E)}{m_\tau(E)}|\right\}$$
(which induces the topology on ${\rm Stab}(\kd)$ as described before).
Here, $\sigma=(\kp,Z)$, $\tau=(\kq,W)$ and $m_\sigma(E)$ is the mass of $E$, i.e.\ $m_\sigma(E)=\sum |Z(A_i)|$ if $A_i$ are the semistable factors of $E$.
\end{remark} 
\section{Stability conditions on K3 surfaces}\label{sec:K3}

The last lecture discusses results of Bridgeland on stability conditions for K3 surfaces. In Section
\ref{sec:stabonsurf}
explicit examples have been constructed and a characterization of those in terms of stability of point sheaves
has been proved. Applying autoequivalences of $\Db(X)$ produces more examples and Bridgeland in \cite{BrK3}
describes in this way a whole connected component of ${\rm Stab}(X)$. We will sketch the main steps of the argument
and present an intriguing conjecture, due to Bridgeland, predicting that this distinguished  connected component is
simply connected. At the end, we will rephrase the conjecture in terms of the fundamental group
of a certain moduli stack.

\subsection{Main theorem and conjecture}
For a triangulated category $\kd$, we have introduced ${\rm Stab}(\kd)$, the space
of locally finite stability conditions $\sigma=(\kp,Z)$ on $\kd$. Implicitly, the stability function
$Z:K(\kd)\to\CC$ is assumed to factor through a fixed quotient $K(\kd)\twoheadrightarrow \Gamma$, which
in the application is obtained by quotienting $K(\kd)$ by numerical equivalence.

Following \cite{BrAnn}, we have equipped ${\rm Stab}(\kd)$ with a topology which on each connected
component ${\rm Stab}(\kd)^{\rm o}\subset {\rm Stab}(\kd)$ is induced by the pro\-duct topology on
$\Gamma^*\times{\rm Slice}(\kd)$. Moreover, according to Theorem \ref{thm:mainlocalhomeo}, for each
${\rm Stab}(\kd)^{\rm o}$ there exists a linear subspace $V^{\rm o}\subset\Gamma^*$ such that
the projection ${\rm Stab}(\kd)^{\rm o}\to V^{\rm o}$ is a local homeomorphism. Ideally, this would be
a topological covering of an open subset of $V^{\rm o}$, but the covering property is known to be violated
in examples, see e.g.\ \cite{Mein}.
However, in the case we are interested in here, namely $\kd=\Db(X)$ with $X$ a smooth projective K3
surface, the covering property has been verified in \cite{BrK3}. But we first need to describe the image.

In Section \ref{sec:constrhearts}, stability conditions $\sigma=(\kp,Z)$ on $\Db(X)$ have been constructed
with $Z(E)=\langle \exp(B+i\omega),v(E)\rangle$ and  heart $\ka(\omega,\beta)=\kp(0,1]$ constructed
as the tilt of $\coh(X)$ with respect to a certain torsion theory $(\kt_{(\omega,\beta)},\kf_{(\omega,\beta)})$.
Here, $\omega,B\in{\rm NS}(X)\otimes\QQ$, such that $\omega$ is ample with $\omega^2>2$, and
$\beta=(B.\omega)$. The class $$\exp(B+i\omega)=(1,B+i\omega,\frac{B^2-\omega^2}{2}+i(B.\omega))$$
is contained in $N(X)\otimes\QQ$, where $N(X)$ is the extended N\'eron--Severi lattice
$N(X)=H^*_{alg}(X,\ZZ)=\ZZ\oplus {\rm NS}(X)\oplus \ZZ$. Using the non-degenerate Mukai pairing, 
we shall tacitly identify $N(X)^*=\Hom(N(X),\CC)$ with $N(X)\otimes\CC$. In particular,
any stability function $Z$ can be written as $Z=\langle w,~~\rangle$ and thus
the map $\sigma=(Z,\kp)\mapsto Z$ is viewed as a map $${\rm Stab}(X)\to N(X)\otimes\CC.$$

Elements in $N(X)\otimes\CC$ of the form $\exp(B+i\omega)$ are contained in
a distinguished subset $\kp_0^+(X)$,
which shall be introduced next.

\begin{definition} Let $$\kp(X)\subset N(X)\otimes\CC$$
be the open set of all $\Omega\in N(X)\otimes\CC$ such
that ${\rm Re}(\Omega),{\rm Im}(\Omega)$ span
a positive plane in $N(X)\otimes\RR$.
\end{definition}

Here, $N(X)\otimes\RR$ is endowed with the Mukai pairing, i.e.\ the standard intersection pairing with an
extra sign on $H^0\oplus H^4$. In particular, $N(X)$ is a lattice of signature $(2,\rho(X))$.

\begin{ex}
For any $\omega\in {\rm NS}(X)\otimes\RR$ with $\omega^2>0$ and arbitrary $B\in {\rm NS}(X)\otimes\RR$,
the class $\exp(B+i\omega)$ is contained in $\kp(X)$. Indeed, ${\rm Re}(\exp(i\omega))=(1,0,-\omega^2/2)$
and ${\rm Im}(\exp(i\omega))=\omega$ are orthogonal of square $\omega^2$. Multiplication with
$\exp(B)$ is an orthogonal transformation of $N(X)\otimes\RR$, which proves the claim.
\end{ex}

The orientations of two given positive planes in $N(X)\otimes\RR$ can be compared via orthogonal projection.
This leads to a decomposition $$\kp(X)=\kp^+(X)\amalg\kp^-(X)$$ in two connected components.
We can distinguish one, say $\kp^+(X)$, by requiring that it contains $\exp(i\omega)$ with $\omega$ ample.

Instead of $\omega^2>2$ we assumed in Proposition \ref{prop:ZEsph} that
$Z(E)\not\in\RR_{\leq0}$ for all spherical $E\in\coh(X)$. The latter motivates a further shrinking of $\kp^+(X)$.

Let $\Delta:=\{\delta\in N(X)~|~\delta^2=-2\}$ which in particular contains the classical $(-2)$-classes
in ${\rm NS}(X)$ such as $[C]$ with $\PP^1\cong C\subset X$, but also $(1,0,1)$.
Then one defines $$\kp^+_0(X):=\kp^+(X)\setminus\bigcup_{\delta\in\Delta}\delta^\perp.$$
Since classes in $\Delta$ are real, the condition $\Omega\in\delta^\perp$ is equivalent to
$\langle{\rm Re}(\Omega),\delta\rangle=\langle{\rm Im}(\Omega),\delta\rangle=0$.

The following is the main result of \cite{BrK3}. 
Let ${\rm Stab}(X)^{\rm o}\subset{\rm Stab}(X)$ be the connected component of numerical
stability conditions containing those constructed in Corollary \ref{cor:constrstable}.

\begin{thm}\label{thm:coveringK3}
The natural projection $\sigma=(\kp,Z)\mapsto Z$ defines a topo\-logical covering
$$\xymatrix{{\rm Stab}(X)^{\rm o}\ar@{>>}[r]&\kp^+_0(X)}$$
with $\Aut^{\rm o}_0(\Db(X))$ as group of deck transformations.
\end{thm}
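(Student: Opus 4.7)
The plan is to leverage Theorem \ref{thm:mainlocalhomeo}, which already tells us that the projection $\pi\colon\sigma=(\kp,Z)\mapsto Z$ is a local homeomorphism onto an open subset of the linear space $V^{\rm o}\subset N(X)\otimes\CC$. Since the examples from Corollary \ref{cor:constrstable} have $Z=\langle\exp(B+i\omega),-\rangle$ with $(B,\omega)$ varying over a $\QQ$-dense open set of ${\rm NS}(X)_\QQ^{\oplus 2}$, their central charges span $N(X)\otimes\CC$, so $V^{\rm o}=N(X)\otimes\CC$. What remains, therefore, is: (a) prove that the image of $\pi$ on ${\rm Stab}(X)^{\rm o}$ is contained in $\kp_0^+(X)$; (b) upgrade the local homeomorphism to a topological covering of $\kp_0^+(X)$; (c) identify the fibers with $\Aut_0^{\rm o}(\Db(X))$-orbits.

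For (a), I would write $Z=\langle\Omega,-\rangle$ and establish three things. The positive-plane condition $\Omega\in\kp(X)$ is visibly satisfied on the constructed examples, and the set of $\sigma\in{\rm Stab}(X)^{\rm o}$ where it holds is open; I would argue it is also closed by showing that degeneration to a semi-definite plane would force the existence of a semistable object with $Z(E)=0$, contradicting the defining property of a stability function on the heart. By connectedness of ${\rm Stab}(X)^{\rm o}$, the image is then contained in one connected component of $\kp(X)$, and the standard examples pin it down to $\kp^+(X)$. To exclude the walls $\delta^\perp$ for $\delta\in\Delta$, I would observe that on any wall there is a spherical object $S$ with Mukai vector $\pm\delta$ whose image under $Z$ vanishes; combining Proposition \ref{prop:ZEsph} with the fact that every spherical class is represented in $\Db(X)$ by an actual object in the heart of some nearby stability condition, this again contradicts $Z\ne 0$ on nonzero semistables. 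Hence $\Omega\in\kp_0^+(X)$.

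For (b), the heart of the argument is path-lifting. Given $\sigma_0\in{\rm Stab}(X)^{\rm o}$ and a path $\gamma\colon[0,1]\to\kp_0^+(X)$ with $\gamma(0)=\pi(\sigma_0)$, the local homeomorphism of Theorem \ref{thm:mainlocalhomeo} produces a lift on some maximal subinterval $[0,t_0)$. One must show $t_0=1$ and that the lift extends to $t_0$. This is where the real work lies: using the Bogomolov--Gieseker inequality $v(E)^2\geq -2$ for stable objects (together with discreteness arguments of the kind used in Proposition \ref{prop:discZ} and Remark \ref{rem:discZ}), one obtains uniform estimates on the masses $m_{\sigma_t}(v)$ for each class $v\in N(X)$ as $t\to t_0$. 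Because $\gamma(t_0)$ avoids every $\delta^\perp$, no spherical class $\delta$ can have $Z_{\sigma_{t_0}}(\delta)=0$, which prevents the semistable objects of any fixed class from accumulating at the real axis and collapsing the slicing. A Cauchy argument in the metric topology on ${\rm Stab}(X)^{\rm o}$ then produces the limit stability condition, and continuity of its slicing is proved by the usual Harder--Narasimhan bookkeeping. Once path-lifting is established, evenness of covering at a point $\Omega$ follows from the local homeomorphism property applied on each sheet, where the sheets correspond to connected components of $\pi^{-1}(U)$ for a small contractible neighborhood $U$ of $\Omega$.

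For (c), the deck group is identified as $\Aut_0^{\rm o}(\Db(X))$, namely those autoequivalences $\Phi$ preserving the distinguished component and acting trivially on the numerical Grothendieck group $N(X)$. Any such $\Phi$ satisfies $Z\circ\Phi^{-1}=Z$, hence permutes fibers of $\pi$ within ${\rm Stab}(X)^{\rm o}$, and the action is free because $\pi$ is a local homeomorphism. Transitivity on fibers is the reverse direction: given $\sigma,\sigma'\in\pi^{-1}(\Omega)$, I would join them by a path in ${\rm Stab}(X)^{\rm o}$ (this component is path-connected by construction), project it to a loop in $\kp_0^+(X)$, and then reinterpret monodromy along this loop as an autoequivalence, using that stability conditions determine their stable objects up to unique isomorphism so the comparison of slicings at the endpoints gives an equivalence of hearts that extends to $\Db(X)$.

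The main obstacle is unquestionably step (b), and specifically the extension of path-lifts across the limit $t_0$: one must rule out pathological degenerations of slicings without having a global finiteness hypothesis on the fibers, relying on the geometry of walls $\delta^\perp$ and the Bogomolov--Gieseker estimate to control mass and phases. The other steps are largely formal once the covering property is in hand.
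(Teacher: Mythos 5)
Your architecture (local homeomorphism $\Rightarrow$ identify the image $\Rightarrow$ path lifting $\Rightarrow$ deck group) is a reasonable modern way to organize the theorem, but it is not the route of \cite{BrK3} sketched in these notes, and in the two places where the actual content lies your argument has genuine gaps. Bridgeland instead constructs the explicit open set $U(X)=V(X)\cdot\widetilde{\rm GL}{}^+(2,\RR)$, characterizes it by stability of the point sheaves $k(x)$, classifies the generic boundary points (Proposition \ref{prop:boundary}, types $A^\pm$ and $C$), shows that $T_A^2$ and $T_{\ko_C(k)}$ carry the far side of each wall back into $U(X)$, and deduces that the translates of $\overline{U(X)}$ under the group generated by these twists tile ${\rm Stab}(X)^{\rm o}$; since $\pi$ is injective on $U(X)$, the covering property, the surjectivity onto $\kp^+_0(X)$, and the transitivity of the deck action all fall out of this tiling. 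Your step (b) replaces this by an appeal to ``uniform mass estimates from Bogomolov--Gieseker'' and ``a Cauchy argument in the metric topology'': completeness of a full component is a separate and later theorem (\cite{Wolf2}), and even granting it, the estimate you need is exactly the statement that the only degenerations are those caused by classes $\delta\in\Delta$ hitting $Z^{-1}(0)$ --- which requires the nontrivial lemma that \emph{every} $\delta\in\Delta$ is represented by a $\sigma$-semistable object for \emph{every} $\sigma$ in the component. You also use this lemma silently in step (a) to exclude the walls $\delta^\perp$, without proving it. (A smaller slip: the classes $\exp(B+i\omega)$ alone do not span affinely, since their differences have vanishing $H^0$-component; you need the $\widetilde{\rm GL}{}^+(2,\RR)$-orbit to conclude $V^{\rm o}=N(X)\otimes\CC$.)

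The most serious gap is in step (c). Monodromy of a covering produces a permutation of the fibre, not an autoequivalence of $\Db(X)$; the claim that ``the comparison of slicings at the endpoints gives an equivalence of hearts that extends to $\Db(X)$'' is precisely the statement to be proved, and there is no general mechanism manufacturing a functor from two stability conditions with the same central charge. Transitivity of $\Aut^{\rm o}_0(\Db(X))$ on fibres is equivalent to the covering being Galois with that deck group, and cannot be extracted from path-connectedness plus abstract covering theory; in \cite{BrK3} it is obtained concretely by moving any two points of a fibre into $U(X)$ by words in $T_A^2$ and $T_{\ko_C(k)}$, where injectivity of $\pi$ forces them to coincide. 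You also omit faithfulness, i.e.\ that an autoequivalence acting trivially on $N(X)$ and fixing a stability condition is isomorphic to the identity. Without the boundary analysis of Proposition \ref{prop:boundary} (or an equivalent support-property argument carried out in full), neither the covering property nor the identification of the deck group is established.
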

Here, $\Aut^{\rm o}_0(\Db(X))$ is the subgroup of $\Aut(\Db(X))$ of all linear
exact auto\-equivalences $\Phi:\Db(X)\congpf\Db(X)$ that preserve the distinguished component 
${\rm Stab}(X)^{\rm o}$ and such that $\Phi={\rm id}$ on $N(X)$.

\begin{conjecture}\label{conj:Brconj}{\bf (Bridgeland)} The component ${\rm Stab}(X)^{\rm o}$ is simply connected and, therefore,
$\pi_1(\kp^+_0(X))\congpf\Aut^{\rm o}_0(\Db(X))$.
\end{conjecture}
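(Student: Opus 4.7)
The strategy begins with a reduction. By Theorem \ref{thm:coveringK3}, the map $\Stab(X)^{\rm o} \twoheadrightarrow \kp^+_0(X)$ is already a topological covering with deck group $\Aut^{\rm o}_0(\Db(X))$, so the isomorphism $\pi_1(\kp^+_0(X)) \cong \Aut^{\rm o}_0(\Db(X))$ will follow formally from covering space theory once simple connectedness of $\Stab(X)^{\rm o}$ is proved. The entire content of Conjecture \ref{conj:Brconj} is thus concentrated in showing that the distinguished component has trivial fundamental group.

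My first reduction would exploit the $\widetilde{\rm GL}\!{}^+(2,\RR)$-action of Section \ref{sect:GL}. The only lift $(M,f)$ that can fix a stability condition $\sigma = (\kp,Z)$ is the identity: if $\kp(\phi) = \kp(f(\phi))$ for all $\phi$ with $\kp(\phi) \ne 0$ and $M^{-1} \circ Z = Z$, then $f = \id$ and hence $M = \id$, so the action is free. After verifying properness, one obtains a principal bundle
$$\widetilde{\rm GL}\!{}^+(2,\RR) \longrightarrow \Stab(X)^{\rm o} \longrightarrow Q,$$
whose structure group is connected and simply connected, so the long exact sequence of homotopy groups identifies $\pi_1(\Stab(X)^{\rm o}) \cong \pi_1(Q)$. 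The quotient $Q$ should cover the period-type quotient $\kp^+_0(X)/\mathrm{GL}^+(2,\RR)$, which is an open subset of the oriented Grassmannian of positive two-planes in $N(X)\otimes\RR$ (a Hermitian symmetric domain of type IV) with the loci cut out by the classes $\delta \in \Delta$ removed. This is a classical hyperplane-complement setup, for which one has a Deligne--Brieskorn-type description of $\pi_1$ by meridians $\gamma_\delta$ around each $\delta^\perp$, subject to braid-like relations encoded by the intersection combinatorics of $\Delta$.

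The matching with autoequivalences would then proceed as follows. A small loop $\gamma_\delta$ around $\delta^\perp$ lifts, via the covering, to a path in $\Stab(X)^{\rm o}$ between $\sigma$ and some $\Phi_\delta(\sigma)$ with $\Phi_\delta \in \Aut^{\rm o}_0(\Db(X))$; I would identify $\Phi_\delta$ with a spherical twist $T_E$ along a $\sigma$-stable spherical object $E$ of class $v(E)=\delta$, using the wall-crossing picture implicit in Section \ref{sec:Classi}, where crossing $\delta^\perp$ rearranges the stability of spherical objects via a tilt. The conjecture then reduces to showing that the homomorphism $\pi_1(\kp^+_0(X)) \to \Aut^{\rm o}_0(\Db(X))$ sending $\gamma_\delta \mapsto T_E$ is an isomorphism: surjectivity is the statement that $\Aut^{\rm o}_0$ is generated by spherical twists, and injectivity is the statement that every braid-type relation among the meridians has a categorical counterpart among the $T_E$.

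The hard part will be precisely this last matching. Surjectivity — every autoequivalence trivial on $N(X)$ and preserving $\Stab(X)^{\rm o}$ is a composition of spherical twists — is a long-standing open problem, verified only in special cases; injectivity requires a complete Artin-type presentation of the subgroup of $\Aut(\Db(X))$ generated by spherical twists, with control over every algebraic relation between them (braid relations from pairs $\delta,\delta'$ with $\langle\delta,\delta'\rangle = 1$, commutation relations when $\langle\delta,\delta'\rangle = 0$, and so on). This amounts to a categorical analogue of the Deligne--Brieskorn theorem adapted to the arrangement $\bigcup_{\delta\in\Delta}\delta^\perp$, and proving it inside $\Db(X)$ is likely to require genuinely new input — which is why Conjecture \ref{conj:Brconj} remains open.
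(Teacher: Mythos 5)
The statement you are asked about is a \emph{conjecture}: the paper offers no proof of it, and explicitly records that it ``has not been verified for a single projective K3 surface.'' Your text is accordingly not a proof but a programme, and you say so yourself in the last paragraph. That self-assessment is the correct one, but it means the proposal cannot be accepted as a proof of the statement; the genuine gap is the entire final step. Concretely: (a) your appeal to a ``Deligne--Brieskorn-type description'' of $\pi_1$ of the complement of $\bigcup_{\delta\in\Delta}\delta^\perp$ in the quotient period domain is not available --- the arrangement is infinite, the ambient space is a type IV domain rather than an affine space, and the paper itself points out that $\pi_1(D_0^+)$ is typically not finitely generated and that even the $K(\pi,1)$ question is open (the reference to Allcock in Section \ref{sec:rephras}); (b) granting such a presentation, verifying that every relation among meridians holds among the corresponding twists, and that no further relations appear, is precisely the open content of the conjecture, and you supply no mechanism for it.

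Two smaller points. First, your framing of ``surjectivity'' as a long-standing open problem misallocates what is known: Theorem \ref{thm:coveringK3} already exhibits ${\rm Stab}(X)^{\rm o}\to\kp^+_0(X)$ as a regular covering with deck group $\Aut^{\rm o}_0(\Db(X))$, so the monodromy homomorphism $\pi_1(\kp^+_0(X))\to\Aut^{\rm o}_0(\Db(X))$ is automatically surjective, and the paper's remark following Proposition \ref{prop:boundary} records Bridgeland's identification $\Aut^{\rm o}_0(\Db(X))=\langle T_{\ko_C(k)},T_A^2\rangle\cap\Aut_0(\Db(X))$. What is open, besides injectivity, is whether $\Aut^{\rm o}_0=\Aut_0$, i.e.\ whether every autoequivalence preserves the distinguished component --- a different statement from the one you isolate. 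Second, your reduction via the free $\widetilde{\rm GL}\!{}^+(2,\RR)$-action reproduces, in slightly different packaging, the paper's own passage from $\pi_1(\kp^+_0(X))$ to $\pi_1(D_0^+)$ via the ${\rm GL}\!{}^+(2,\RR)$-bundle $\kp^+_0(X)\to D_0^+$ in Section \ref{sec:rephras}; that part is sound (modulo the properness you flag) but carries no new leverage on the conjecture.
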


\begin{remark}
Stronger versions of this conjecture exist. E.g.\ one could conjecture that ${\rm Stab}(X)^{\rm o}$
is the only connected component (of maximal dimension) of ${\rm Stab}(X)$ or, slightly weaker,
that $\Aut(\Db(X))$ preserves
${\rm Stab}(X)^{\rm o}$.\footnote{In \cite{HMS} it was shown that $\Aut(\Db(X))$ at least preserves $\kp^+_0(X)$.}
 Both would imply that $$\Aut^{\rm o}_0(\Db(X))=\Aut_0(\Db(X))={\rm Ker}\left(\Aut(\Db(X))\to{\rm O}(H^*(X,\ZZ))\right)\!.$$ 
 
Since the image of the representation $\Aut(\Db(X))\to{\rm O}(H^*(X,\ZZ))$
has been described  (see \cite{HMS}), this would eventually yield a complete
description of $\Aut(\Db(X))$. Note that before Bridgeland phrased the conjecture above,
one had no idea how to describe $\Aut(\Db(X))$, even conjecturally. Describing the highly complex
group $\Aut(\Db(X))$ in terms of a certain fundamental group (see also Section \ref{sec:rephras}) is as explicit as 
a description can possibly get. It also fits well with Kontsevich's homological mirror symmetry which relates
$\Aut(\Db(X))$ of any Calabi--Yau variety $X$ with the fundamental group of the moduli space
of complex structures on its mirror \cite{KontENS}.
\end{remark}

Up to now, the conjecture has not been verified for a single projective K3 surface. The cases
of generic non-projective K3 surfaces and generically twisted projective K3 surfaces $(X,\alpha)$
have been successfully treated in \cite{HMSComp}. 
\subsection{Autoequivalences}\label{sec:autoexa} Before discussing some aspects of the proof of Theorem
\ref{thm:coveringK3}, it is probably useful to give a few explicit examples of auto\-equivalences of
$\Db(X)$ and to explain how  loops around $\delta^\perp$, the generators of $\pi_1(\kp^+_0(X))$,
can possibly be responsible for elements in $\Aut(\Db(X))$.

 As Mukai observed in \cite{Mu}, any $\Phi\in\Aut(\Db(X))$ induces an isometry $\Phi^H$ of $H^*(X,\ZZ)$ (as before, endowed with the usual intersection pairing
modified by a sign on $H^0\oplus H^4$) that also respects the weight two Hodge structure given by $H^{2,0}$.
By definition, $\Aut_0(\Db(X))=\{\Phi~|~\Phi^H={\rm id}\}$.

i) Any automorphism $f:X\congpf X$ induces naturally an autoequivalence:
$\Aut(X)\,\hookrightarrow \Aut(\Db(X))$, $f\mapsto f_*$. The induced action
on $H^*(X,\ZZ)$ is just the standard one.
\smallskip

ii) If $L\in\Pic(X)$, then $E\mapsto E\otimes L$ defines an autoequivalence. Thus,
$\Pic(X)\,\hookrightarrow\Aut(\Db(X))$. The induced action on $H^*(X,\ZZ)$ is described
by multiplication with $\exp({\rm c}_1(L))$.
\smallskip

iii) Recall that an object $E\in\Db(X)$ is called \emph{spherical} if $\Ext^*(E,E)$ is two-dimensional, i.e.\
$$\Ext^*(E,E)\cong
H^*(S^2,\CC).$$  

In particular, the Mukai
vector $v(E)\in N(X)$ of a spherical object $E$ is a $(-2)$-class, i.e.\ $v(E)\in\Delta$.
Any spherical object $E\in\Db(X)$ induces an autoequivalence $T_E$ which
is called the \emph{spherical twist} (or \emph{Seidel--Thomas twist}) associated with $E$. It can be described as the Fourier--Mukai transform
with Fourier--Mukai kernel given by the cone of the trace map
$E^*\boxtimes E\to\ko_\Delta$.

 The spherical twist sends an object $F\in\Db(X)$
to the cone of the evaluation map $\Ext^*(E,F)\otimes E\to F$. The important features
are: $T_E(E)\cong E[1]$ and $T_E(F)\cong F$ for any $F\in E^\perp$. For details
see e.g.\ \cite{HuyFM}. The induced action $T_E^H$ on cohomology
is given by reflection $s_{v(E)}$ in $v(E)^\perp$.  But note that $T_E$ itself is not of order two.
The reflections  $s_{v(E)}$ are well known classically for the case $E=\ko_C(1)$ where
$\PP^1\cong C\subset X$. Then $v(\ko_C(1))=(0,[C],0)$ and thus $T^H_{\ko_C(1)}=s_{v(\ko_C(1))}$ acts as
identity on $H^0\oplus H^4$ and as the reflection $s_{[C]}$ on $H^2$. The latter is
important for the description of the ample cone of a K3 surface.

The mysterious part of $\Aut(\Db(X))$ is the subgroup that is generated by spherical twists
$T_E$ and, very roughly, Bridgeland's conjecture sets it in relation to $\pi_1(\kp^+_0(X))$.
But even the construction of a homomorphism
\begin{equation}\label{eqn:fundAut}
\pi_1(\kp^+_0(X))\to\Aut(\Db(X)),
\end{equation}
one of the main achievements of \cite{BrK3}, is highly non-trivial. The naive idea that a loop
around $\delta^\perp$ is mapped to $T_E^2$, where $v(E)=\delta$, has among others
the obvious flaw that such an $E$ is not unique.
Theorem \ref{thm:coveringK3} not only says that (\ref{eqn:fundAut}) exists but determines
its image explicitly as $\Aut^{\rm o}_0(\Db(X))$.

\subsection{Building up ${\rm Stab}(X)^{\rm o}$}
Recall that in Section \ref{sec:constrhearts} stability conditions
$\sigma(\omega,B)$ on $\Db(X)$ were constructed with heart $\ka(\omega,\beta)$ and stability function
$Z=\langle\exp(B+i\omega),~~\rangle$. Here, $\beta=(B.\omega)$ with $B,\omega\in N(X)\otimes\RR$,
$\omega$ ample and such that $Z(\delta)\not\in\RR_{\leq0}$ for all $\delta\in\Delta$ of positive rank.\footnote{Strictly speaking, in Section \ref{sec:constrhearts} the result was only proved for rational such classes $B,\omega$. It is true without the rationality, but in \cite{BrK3} the proof is given after the analysis of ${\rm Stab}(X)^{\rm o}$.}
Let $$V(X)\subset{\rm Stab}(X)^{\rm o}$$ be the open subset of all these stability conditions.
As by construction a $\sigma(B,\omega)\in V(X)$ only depends on the stability function $Z=\langle\exp(B+i\omega),~~\rangle$, the projection ${\rm Stab}(X)\to N(X)^*=\Hom(N(X),\CC)\cong N(X)\otimes\CC$
identifies $V(X)$ with an open subset of $N(X)\otimes\CC$ and, in fact, under this identification $V(X)\subset\kp^+_0(X)$ (cf.\ \cite[Prop.\ 11.2]{BrK3}).

Applying the natural action of $\widetilde{\rm GL}\!{}^+(2,\RR)$ on ${\rm Stab}(X)$ (see
Section \ref{sect:GL}) yields more stability conditions and one introduces
$$U(X):=V(X)\cdot\widetilde{\rm GL}\!{}^+(2,\RR)\subset{\rm Stab}(X)^{\rm o},$$
which can in fact also be seen as a $\widetilde{\rm GL}\!{}^+(2,\RR)$-bundle over $V(X)$
(or rather its image in $\kp^+_0(X)$). Note that $V(X)$ can also
be considered naturally as a subset in  the quotients 
${\rm Stab}(X)^{\rm o}/\widetilde{\rm GL}\!{}^+(2,\RR)$ and $\kp^+_0(X)/{\rm GL}\!{}^+(2,\RR)$
which will be studied further in Section \ref{sec:rephras}.

As a consequence of the discussion in Section \ref{sec:Classi} one obtains the following characterization of
$U(X)$ (see \cite[Prop.\ 10.3]{BrK3}).

\begin{cor} A stability condition $\sigma\in{\rm Stab}(X)^{\rm o}$ is contained in $U(X)$ if and only
if all point sheaves $k(x)$ are $\sigma$-stable of the same phase.\qqed
\end{cor}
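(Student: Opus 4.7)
Suppose $\sigma=\sigma(\omega,B)\cdot g$ with $g=(M,f)\in\widetilde{\rm GL}\!{}^+(2,\RR)$. In $\sigma(\omega,B)$ the skyscraper $k(x)$ is torsion and so lies in $\kt_{(\omega,\beta)}\subset\ka(\omega,\beta)$; one computes $Z(k(x))=\langle\exp(B+i\omega),(0,0,1)\rangle=-1$, so $k(x)\in\kp_{\sigma(\omega,B)}(1)$. Since $k(x)$ is minimal in $\ka(\omega,\beta)$ (cf.\ the Remark at the end of Section~\ref{sec:Classi} for K3 surfaces), it is $\sigma(\omega,B)$-stable of phase~$1$. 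Applying $g$ rotates every phase uniformly, placing each $k(x)$ at the common phase $f^{-1}(1)$.

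\textbf{Reverse direction.} Assume all $k(x)$ are $\sigma$-stable of a common phase $\phi_0$. Choose $g_0=(M_0,f_0)\in\widetilde{\rm GL}\!{}^+(2,\RR)$ with $f_0(1)=\phi_0$ and set $\tau_0:=\sigma\cdot g_0$, so every $k(x)$ is $\tau_0$-stable of phase~$1$. Theorem~\ref{thm:detheart} identifies the heart $\ka_{\tau_0}=\ka(\omega,\beta)$ for some $\omega\in\NS(X)\otimes\RR$ and $\beta\in\RR$, and the footnote to that theorem makes $\omega$ ample because $\sigma\in{\rm Stab}(X)^{\rm o}$ sits in a maximal component. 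Writing $Z_{\tau_0}=\langle w,\cdot\rangle$ with $w\in N(X)\otimes\CC$, Claim~4 in the proof of Theorem~\ref{thm:detheart} yields ${\rm Im}(w)=(0,\omega,\beta)$, while ${\rm Re}(w)=(r_0,B_0,c)$ has $r_0>0$ since $Z_{\tau_0}(k(x))=-r_0<0$.

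\textbf{Matching the exponential form.} To conclude it suffices to produce $g_1\in\widetilde{\rm GL}\!{}^+(2,\RR)$ with $\tau_0\cdot g_1\in V(X)$. I work inside the $3$-dimensional subgroup fixing ``$k(x)$ stable of phase~$1$'', parameterized by upper-triangular $M_1=\bigl(\begin{smallmatrix}\lambda & b\\ 0 & d\end{smallmatrix}\bigr)$ with $\lambda,d>0$, $b\in\RR$. This action rescales ${\rm Im}(w)$ by $1/d$---preserving the heart since $(\kt_{(\omega,\beta)},\kf_{(\omega,\beta)})=(\kt_{(\omega/d,\beta/d)},\kf_{(\omega/d,\beta/d)})$---and transforms ${\rm Re}(w)$ by a scaling together with a shear by ${\rm Im}(w)$. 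The three real parameters match the three real conditions $(w_{\tau_0\cdot g_1})_0=1$ and $\langle w_{\tau_0\cdot g_1},w_{\tau_0\cdot g_1}\rangle=0$, the latter splitting as $\beta'=(B'.\omega')$ and $c'=({B'}^2-{\omega'}^2)/2$; together these force $w_{\tau_0\cdot g_1}=\exp(B'+i\omega')$ and so $\tau_0\cdot g_1=\sigma(\omega',B')\in V(X)$. The main obstacle is to verify that the unique candidate $g_1$ really has $d\in\RR_{>0}$: after fixing $\lambda=r_0$ and solving the imaginary isotropy condition for $b$ in terms of $d$, the real condition reduces to $d^{2}=r_0^{2}\omega^{4}/G$, where $G={\rm Re}(w)^2\,{\rm Im}(w)^2-\langle{\rm Re}(w),{\rm Im}(w)\rangle^2$ is the Gram determinant of the plane spanned by ${\rm Re}(w)$ and ${\rm Im}(w)$ with respect to the Mukai pairing. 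Its positivity is precisely the condition $w\in\kp^+(X)$, guaranteed throughout ${\rm Stab}(X)^{\rm o}$. Hence $\sigma\in V(X)\cdot\widetilde{\rm GL}\!{}^+(2,\RR)=U(X)$.
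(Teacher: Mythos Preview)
The paper does not give a proof of this corollary; it is marked \qqed and attributed to the discussion in Section~\ref{sec:Classi} together with \cite[Prop.~10.3]{BrK3}. Your write-up supplies the details along the expected lines, and both directions are set up correctly: the forward direction is fine (minimality of $k(x)$ in $\ka(\omega,\beta)$ gives stability, and the $\widetilde{\rm GL}\!{}^+(2,\RR)$-action only relabels phases), and in the reverse direction you correctly rotate to phase~$1$, invoke Theorem~\ref{thm:detheart} to identify the heart, and then solve explicitly for the upper-triangular $g_1$ putting $w$ into exponential form. The algebra leading to $d^2=r_0^2\omega^4/G$ is correct.

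There is, however, a genuine circularity at the last step. You justify $G>0$ by asserting that $w\in\kp^+(X)$ is ``guaranteed throughout ${\rm Stab}(X)^{\rm o}$'', i.e.\ that $\pi({\rm Stab}(X)^{\rm o})\subset\kp_0^+(X)$. But that containment is part of Theorem~\ref{thm:coveringK3}, and in Bridgeland's development \cite{BrK3} the present corollary (his Prop.~10.3) is an \emph{input} to that theorem: the characterisation of $U(X)$ is used in the boundary analysis (Proposition~\ref{prop:boundary}) and in showing that translates of $U(X)$ fill out the component, which is how one eventually establishes that the image of $\pi$ is $\kp_0^+(X)$. The paper itself is explicit that the corollary is meant to follow from Section~\ref{sec:Classi} alone, not from Theorem~\ref{thm:coveringK3}.

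What is missing is a direct argument that ${\rm Re}(w)$ and ${\rm Im}(w)=(0,\omega,\beta)$ span a positive plane, using only that $Z=\langle w,\,\cdot\,\rangle$ is a stability function on $\ka(\omega,\beta)$ with $\omega$ ample. Equivalently, writing $R_\perp$ for the component of ${\rm Re}(w)$ orthogonal to $(0,\omega,\beta)$, you must show $R_\perp^2>0$ inside $I^\perp$ (which has signature $(1,\rho)$). This uses the constraints on ${\rm Re}(Z)$ coming from the objects in $\kp(1)$---point sheaves and shifts $E[1]$ of $\mu_\omega$-stable bundles of slope $\beta$---identified in Claims~1 and~5 of Section~\ref{sec:Classi}, together with the Hodge index theorem on $N(X)$. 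Filling in this step (or citing the relevant part of \cite[\S10--11]{BrK3} directly rather than Theorem~\ref{thm:coveringK3}) would make your argument self-contained.
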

In fact, it would be enough to assume that $\sigma$ is contained in a good component, see
footnote page \pageref{good}.

The next step in the program is to study the boundary $\partial U(X)\subset{\rm Stab}(X)^{\rm o}$
and explain how to pass beyond it by applying spherical twists. Bridgeland studies two kinds of boundary components,
of type $A^\pm$ and of type $C$.

{\bf Boundary components of type $A$.} Consider a sequence $\sigma_t=(Z_t=\langle\exp(B_t+i\omega_t),~~\rangle,\kp_t)\in V(X)$ converging
to $\sigma=\sigma_0=(Z=\langle\exp(B+i\omega),~~\rangle,\kp)\in\partial U(X)\subset{\rm Stab}(X)^{\rm o}$.
One reason for $\sigma$ not being in $V(X)$ is that there exists a $\delta=(r,\ell,s)\in\Delta$ of positive rank
with $Z(\delta)\in\RR_{\leq0}$ and, since $\sigma$ is in the boundary, in fact $Z(\delta)=0$ (cf.\ Proposition
\ref{prop:ZEsph}).
The imaginary part of the equation reads $\mu(\delta):=(\ell.\omega)/r=(B.\omega)$ and for fixed
$B$ this defines a decomposition of the positive cone into the two regions $\mu>(B.\omega)$ and $\mu<(B.\omega)$:
\vskip-1cm
$$\begin{picture}(-60,100)
\put(2,30){\qbezier(-70.3,20.3)(-33.3,40.3)(7.1,19.5)}
\put(2,30){\qbezier(-70.3,20.3)(-87.3,10.3)(-70.3,0.35)}
\put(2,30){\qbezier(-70.3,0.3)(-30.3,-16.3)(6.7,-0.4)}
\put(2,30){\qbezier(7,19.6)(21.3,9.3)(6.7,-0.45)}
\put(-80,20){\line(2,1){90}}
\put(15,65){\tiny\mbox{$\mu(\delta)=(B.\omega)$}}
\put(-65,65){\tiny\mbox{$\mu(\delta)>(B.\omega)$}}
\put(20,25){\tiny\mbox{$\mu(\delta)<(B.\omega)$}}
\put(-55,48){\tiny\mbox{$V(X)$}}
\put(-135,40){\tiny\mbox{$B~\text{fixed}$}}
\end{picture}$$
However, not all of $\mu(\delta)=(B.\omega)$ is part of the boundary. Indeed,
the real part of $Z(\delta)$  can be written
as $(B.\ell)-s-r\frac{B^2-\omega^2}{2}$. So again for fixed $B$, the boundary looks more like this
(with two holes defined by $\delta^\perp$):
\vskip-1cm
$$\begin{picture}(-60,100)
\put(2,30){\qbezier(-70.3,20.3)(-33.3,40.3)(7.1,19.5)}
\put(2,30){\qbezier(-70.3,20.3)(-87.3,10.3)(-70.3,0.35)}
\put(2,30){\qbezier(-70.3,0.3)(-30.3,-16.3)(6.7,-0.4)}
\put(2,30){\qbezier(7,19.6)(21.3,9.3)(6.7,-0.45)}
\put(-80,20){\line(2,1){30}}
\put(-45,37.5){\circle*{1}}
\put(-40,40){\circle*{1}}
\put(-35,42.5){\circle*{1}}
\put(-30,45){\circle*{1}}
\put(-25,47.5){\circle*{1}}
\put(-20,50){\line(2,1){30}}
\put(-21,49.4){\circle{2}}
\put(-30,30){\vector(-4,1){10}}
\put(-21,34.5){\vector(0,1){10}}
\put(-49.10,35.5){\circle{2}}
\put(-25,27){\tiny\mbox{$\delta^\perp$}}
\put(-135,40){\tiny\mbox{$B~\text{fixed}$}}
\end{picture}$$
\begin{remark}
Here is what this means in terms of a spherical bundle $A$ with $v(A)=\delta$. Suppose $A$ is
$\mu_{\omega_t}$-stable.  Since $\mu_{\omega_t}(A)>(B.\omega)$, the bundle $A$ is an object 
in $\ka(\omega_t,\beta_t)$ for $t>0$. As $\omega_t$ crosses the wall $\mu=(B.\omega)$,
the bundle $A$ can obviously not be contained in the heart of the stability condition any longer.
However, if $\omega_t$ crosses the dotted part, then $A[1]$ will, but something more drastically
happens, when the solid part of $\mu=(B.\omega)$ is crossed.

The set $U(X)$ is determined by the stability of the point sheaves $k(x)$. Every bundle $A$ as before
admits a non-trivial homomorphism $A\to k(x)$ to any $k(x)$.  For $\omega_t$ passing through the solid part
of $\mu=(B.\omega)$, the object
$A$ becomes an object in $\kp(1)$ and the existence of the non-trivial $A\to k(x)$ shows that $k(x)$ can no longer
be stable and, therefore, $\sigma\not\in U(X)$. On the other hand, passing through the dotted part the object $A$ becomes
semistable of phase $0$ and, in particular, the non-trivial $A\to k(x)$ does not contradict stability of $k(x)$.
Thus, $\sigma$ is still in $U(X)$.\footnote{The discussion is simplified by assuming that $\omega_t$ stays ample when passing through $\mu=(B.[C])$.} The image under the stability function looks likes this:
\vskip-1cm
$$\begin{picture}(-40,100)
\put(0,31){\vector(0,1){40}}
\put(-60,30){\line(1,0){59}}
\put(5,30){\circle*{1}}
\put(10,30){\circle*{1}}
\put(15,30){\circle*{1}}
\put(20,30){\circle*{1}}
\put(25,30){\circle*{1}}
\put(30,30){\circle*{1}}
\put(35,30){\circle*{1}}
\put(40,30){\circle*{1}}
\put(45,30){\circle*{1}}
\put(50,30){\circle*{1}}
\put(0,30){\circle{2}}
\put(3,70){\tiny\mbox{$i$}}
\put(-50,60){\tiny\mbox{$Z_t(A)$}}
\put(-130,30){\tiny\mbox{$A\in\kp(1)$\ dest.\ $k(x)$}}
\put(55,30){\tiny\mbox{$A\in\kp(0)$\ not dest.\ $k(x)$}}
\put(-5,50){\qbezier(-7,11)(-30,9)(-30,-10)}
\put(5,50){\qbezier(7,11)(30,9)(30,-10)}
\put(35,37){\vector(0,-1){}}
\put(-35,37){\vector(0,-1){}}
\end{picture}$$

\end{remark}

{\bf Boundary components of type $C$.} So far we have not taken into account the condition that $\omega$ has to be ample. It is well known that the ample cone
inside the positive cone $\kc_X\subset {\rm NS}(X)\otimes\RR$ is cut out by the condition $(\omega.[C])>0$ for all
$(-2)$-curves $\PP^1\cong C\subset X$. Suppose now that in fact $\omega$ is still contained
in the positive cone $\kc_X$ but in the part of the boundary of the
ample cone described by $(\omega.[C])=0$. Then $\exp(B+i\omega)\in\kp^+_0(X)$ implies
$\langle\exp(B+i\omega),(0,[C],k)\rangle\ne0$ for all $k\in \ZZ$. Thus, on the line of all $(B.[C])$ the boundary of
$\partial\kp_0^+(X)$ looks like
\vskip-1cm
$$\begin{picture}(0,100)
\put(-70,30){\vector(0,2){30}}
\put(-95,60){\tiny\mbox{$(\omega.[C])$}}
\put(-50,30){\line(1,0){10}}
\put(-39,30){\circle{2}}
\put(-38,30){\line(1,0){10}}
\put(-27,30){\circle{2}}
\put(-26,30){\line(1,0){10}}
\put(-15,30){\circle{2}}
\put(-14,30){\line(1,0){10}}
\put(-3,30){\circle{2}}
\put(-2,30){\line(1,0){10}}
\put(9,30){\circle{2}}
\put(10,30){\line(1,0){10}}
\put(21,30){\circle{2}}
\put(22,30){\line(1,0){10}}
\put(40,29){\tiny\mbox{$(B.[C])=k=0,\pm1,\pm2,\ldots$}}
\end{picture}$$
\vskip-0.5cm
\begin{remark}
Recall, that the torsion sheaves $\ko_C(k-1)$ (with Mukai vector $(0,[C],k)$)
are torsion sheaves on $X$ and thus contained
in $ \kt_{(\omega_t,\beta_t)}\subset\ka(\omega_t,\beta_t)$. What happens with them when $\omega_t$ passes
through the wall defined by $[C]$? This depends on the position of $B$. More precisely,
when passing through the ray $(B.[C])> k$ the real part of $Z(\ko_C(k-1))$ is po\-si\-tive and,
therefore, $\ko_C(k-1)$ (and in fact all $\ko_C(\ell)$ with $\ell\geq k-1$) can no longer be in the heart of $\sigma$, they have to be shifted. However, passing through $(B.[C])<k$ a priori does not affect the stability of $\ko_C(k-1)$. But the stability of
$\ko_C(k-1)$ is not what matters for the description of the boundary of $U(X)$. Instead, one has to
study the stability of all point sheaves $k(x)$ as before. For $x$ contained in $C$, there exists a non-trivial $\ko_C(k)\to k(x)$ and depending on whether one passes through $(B.[C])<k$ or $(B.[C])>k$ this affects the stability of $k(x)$ or not.
However, if $x\not\in C$, then the stability of $k(x)$ is not affected by the process at all. Thus, the boundary of $U(X)$ that
is caused by non-trivial $\ko_C(k-1)\to k(x)$ looks like this
\vskip-1.8cm
$$\begin{picture}(0,100)
\put(20,30){\circle*{1}}
\put(30,30){\circle*{1}}
\put(40,30){\circle*{1}}
\put(50,30){\circle*{1}}
\put(60,30){\circle*{1}}
\put(-50,30){\line(1,0){60}}
\put(11,30){\circle{2}}
\put(-40,15){\tiny\mbox{$(B.[C])<k$}}
\end{picture}$$
but for varying $k$ they fill up a straight line with all $(0,[C],k)^\perp$ removed.
\end{remark}
%

The two types of boundary above, type $A$ and type $C$, are fundamentally different. Their images in $\kp_0^+(X)$
look like a real boundary for type $C$ and like a ray removed for type $A$:

$$\begin{picture}(0,100)
{\linethickness{0.5mm}
\put(-122,60){\line(1,0){40}}}
\put(-122,64){\line(1,1){20}}
\put(-112,64){\line(1,1){20}}
\put(-102,64){\line(1,1){20}}
\put(-102,34){\line(1,1){50}}
\put(-122,34){\line(1,1){20}}
\put(-81,60){\circle{2}}
\put(-122,44){\line(1,1){10}}
\put(-112,34){\line(1,1){20}}
\put(-102,34){\line(1,1){50}}

\put(-92,34){\line(1,1){20}}
\put(-82,34){\line(1,1){20}}
\put(-92,64){\line(1,1){20}}
\put(-82,64){\line(1,1){20}}
\put(-47,69){\line(1,1){15}}
\put(-57,69){\line(1,1){15}}
\put(-68,60){\tiny\mbox{$U(X)$}}
\put(-85,20){\tiny\mbox{$A$\ boundary}}
\put(42,44){\line(1,1){40}}
\put(52,44){\line(1,1){40}}
\put(62,44){\line(1,1){10}}
\put(87,69){\line(1,1){15}}
\put(72,44){\line(1,1){10}}
\put(97,69){\line(1,1){15}}
\put(92,44){\line(1,1){40}}
\put(82,44){\line(1,1){40}}
\put(112,44){\line(1,1){40}}
\put(102,44){\line(1,1){40}}
\put(75,60){\tiny\mbox{$U(X)$}}
\put(65,20){\tiny\mbox{$C$\ boundary}}
 {\linethickness{0.5mm}
\put(38,40){\line(1,0){8}}
\put(48,40){\line(1,0){8}}
\put(58,40){\line(1,0){8}}
\put(68,40){\line(1,0){8}}
\put(78,40){\line(1,0){8}}
\put(88,40){\line(1,0){8}}
\put(98,40){\line(1,0){8}}
\put(108,40){\line(1,0){8}}}
\put(47,40){\circle{2}}
\put(57,40){\circle{2}}
\put(67,40){\circle{2}}
\put(77,40){\circle{2}}
\put(87,40){\circle{2}}
\put(97,40){\circle{2}}
\put(107,40){\circle{2}}

\end{picture}$$
The above oversimplified discussion was meant to motivate the following result \cite[Thm.\ 12.1]{BrK3}. The details
of the proof are quite lengthy and the case ii) below did not appear above.

\begin{prop}\label{prop:boundary}
For a general point of the boundary $\sigma=(Z,\kp)\in\partial U(X)$ one of the following possibilities occur:

i) The stable factors of $k(x)$, $x\in X$,  are isomorphic to a spherical bundle $A$ of rank $r$ and to
$T_A(k(x))$. More precisely, there exists an exact sequence in $\kp(\phi(k(x)))$ of the form
$0\to A^{\oplus r}\to k(x)\to T_A(k(x))\to 0$.

ii) The stable factors of $k(x)$, $x\in X$,  are isomorphic to $A[2]$ with $A$ a spherical bundle of rank $r$ and to
$T_A^{-1}(k(x))$. There exists an exact sequence in $\kp(\phi(k(x)))$ of the form
$0\to T_A^{-1}(k(x))\to k(x)\to A^{\oplus r}[2]\to 0$.

iii) There exists a curve $\PP^1\cong C\subset X$ such that all $k(x)\in\kp(1)$ are stable for $x\not\in C$ and
have stable factors $\ko_C(k+1)$ and $\ko_C(k)[1]$ for some $k$, i.e.\ there exists an exact sequence
$0\to\ko_C(k+1)\to k(x)\to\ko_C(k)[1]\to 0$.
\end{prop}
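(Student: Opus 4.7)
The plan is to start with a general boundary point $\sigma=(Z,\kp)\in\partial U(X)$ and analyze how stability of the point sheaves $k(x)$ can degenerate. By the corollary characterizing $U(X)$, membership in $U(X)$ is equivalent to all $k(x)$ being $\sigma$-stable of the same phase. Along a path in ${\rm Stab}(X)^{\rm o}$ leading into $\sigma$, the $k(x)$ are stable with a common phase that varies continuously; at $\sigma$ they therefore lie in some common slice $\kp(\phi)$ (normalize $\phi=1$ after acting by $\widetilde{\rm GL}\!{}^+(2,\RR)$), and they remain at least semistable by openness of stability. The failure $\sigma\notin U(X)$ must then come from some $k(x)$ becoming strictly semistable, so I examine their Jordan--H\"older filtrations in $\kp(1)$.

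Next I constrain the possible stable factors. The image of $\sigma$ under the projection lies on the boundary $\partial\kp_0^+(X)$, i.e.\ on some $\delta^\perp$ with $\delta\in\Delta$, because the only way stability of every $k(x)$ fails on a codimension-one locus is via a wall defined by a $(-2)$-class. Generality of the boundary point ensures that a unique such $\delta$ is responsible, and further that $Z(\delta)\in Z(k(x))\cdot\RR_{>0}$. I would show that any stable factor $A$ of $k(x)$ in $\kp(1)$ distinct from $k(x)$ itself satisfies $Z(A)\in\RR\cdot Z(\delta)$, and that the smallest such factor is forced to be rigid (spherical): its Mukai vector is proportional to $\delta$, and $\chi(A,A)=\langle v(A),v(A)\rangle=-2$ together with stability yield $\dim\Ext^*(A,A)=2$.

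Now I split into cases by the rank of $\delta=v(A)$. If $\rk(\delta)>0$, then using Claim 1 of Section \ref{sec:Classi} (applied to a neighbouring stability condition in $V(X)$ where $A$ remains semistable) one checks that $A$ is, up to a shift by an even integer, an honest $\mu_\omega$-stable spherical vector bundle. The two resulting sign choices give cases (i) and (ii). In case (i), compute $\Hom^*(A,k(x))$: since $A$ is a locally free sheaf of rank $r$, this Ext-algebra is concentrated in degree zero with dimension $r$, so the spherical twist triangle $\Hom^*(A,k(x))\otimes A\to k(x)\to T_A(k(x))$ reads as a short exact sequence $0\to A^{\oplus r}\to k(x)\to T_A(k(x))\to 0$ in $\kp(1)$, and numerical bookkeeping $[k(x)]=r\cdot v(A)+[T_A(k(x))]$ confirms $T_A(k(x))$ is the complementary stable factor. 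Case (ii) is the Serre-dual picture, obtained by working with $A[2]$ and $T_A^{-1}$. If instead $\rk(\delta)=0$, then $\delta=(0,[C],k)$ with $[C]^2=-2$, which on a K3 surface forces $C\cong\PP^1$ and $A\cong\ko_C(m)$ for some $m$; the same numerical analysis gives case (iii) with $k(x)$ sitting in the sequence $0\to\ko_C(k+1)\to k(x)\to\ko_C(k)[1]\to 0$ for $x\in C$.

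The main obstacle is the second paragraph: proving that at a \emph{generic} boundary point only one $\delta$-wall is crossed and that exactly two isomorphism classes of stable factors appear. This requires a careful dimension count on the boundary strata of $\kp_0^+(X)$ and ruling out accidental coincidences of phases among several spherical classes; essentially one shows that the locus in $\partial U(X)$ where two or more independent walls meet has real codimension at least two. A subsidiary technical point is the verification that the spherical object $A$ is actually a shifted sheaf (and even locally free in cases (i)--(ii)), which uses the spectral sequence argument of Claim 1 in Section \ref{sec:Classi} together with the fact that $\sigma$ can be deformed into $V(X)$ where the heart structure is explicitly $\ka(\omega,\beta)$.
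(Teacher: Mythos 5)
First, a calibration: the paper does not actually prove this proposition. It is quoted from Bridgeland \cite[Thm.\ 12.1]{BrK3}, the text explicitly describes the preceding discussion of type $A$ and type $C$ boundary components as ``oversimplified'' motivation, adds that ``the details of the proof are quite lengthy'', and notes that case (ii) does not even appear in that discussion. Your sketch follows the same heuristic as that motivation --- a wall associated to a $(-2)$-class $\delta$, split according to whether $\rk(\delta)>0$ (a spherical bundle $A$, cases (i)/(ii)) or $\rk(\delta)=0$ (a $(-2)$-curve, case (iii)) --- and your derivation of the exact sequences from the twist triangles is sound: for $A$ locally free of rank $r$ one has $\Hom^\bullet(A,k(x))=k^{\oplus r}$ concentrated in degree $0$, so the defining triangle of $T_A$ collapses to $0\to A^{\oplus r}\to k(x)\to T_A(k(x))\to 0$, and case (iii) is the rotation of $0\to\ko_C(k)\to\ko_C(k+1)\to k(x)\to 0$.

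There is, however, a concrete error in your second paragraph, and the steps you defer are exactly where the content of \cite[Thm.\ 12.1]{BrK3} lives. You assert that $\pi(\sigma)$ lies on $\partial\kp_0^+(X)$, ``i.e.\ on some $\delta^\perp$'', and in the same breath that $Z(\delta)\in Z(k(x))\cdot\RR_{>0}$. These two conditions are mutually exclusive: $\pi(\sigma)\in\delta^\perp$ means precisely $Z(\delta)=0$, whereas by Theorem \ref{thm:coveringK3} the image of all of ${\rm Stab}(X)^{\rm o}$ --- hence of $\partial U(X)\subset{\rm Stab}(X)^{\rm o}$ --- is the \emph{open} set $\kp_0^+(X)$, which avoids every $\delta^\perp$. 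The walls bounding $U(X)$ are not the complex hyperplanes $\delta^\perp$ but the real-codimension-one loci where $Z(\delta)$ becomes positively proportional to $Z(k(x))$ (your second, correct, condition), i.e.\ where a rigid object of class $\pm\delta$ acquires the phase of the point sheaves; this is what the solid/dotted pictures in the text depict, with the $\delta^\perp$ appearing only as the excised ``holes'' in those walls. Consequently your proposed fix for the genericity issue --- a codimension count on the boundary strata of $\kp_0^+(X)$ --- is aimed at the wrong locus. Beyond this, the assertions you flag as the main obstacle (that a general boundary point sees exactly one class $\delta$, that the destabilizing stable factor is spherical and is, up to shift, a $\mu$-stable bundle or some $\ko_C(k)$, and that the Jordan--H\"older filtration of $k(x)$ has exactly the two stated factors) constitute essentially the entire proof and require the wall classification and deformation arguments of \cite{BrK3}; as they stand they are stated rather than established.
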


The picture above of the images in $\kp^+_0(X)$ of the two kinds of boundary of $U(X)$ also suggests
how to get back into $U(X)$ by autoequivalences. Suppose $\sigma_t$, $t\in(-\varepsilon,\varepsilon)$, with $\sigma_{t>0}\in U(X)$ and $\sigma_{t<0}\not\in U(X)$.
For a type $C$ boundary, a reflection in $(0,[C],k)^\perp$ which lifts to the spherical twist $T_{\ko_C(k)}$ interchanges the two sides of the boundary, i.e.\ $T_{\ko_C(k)}(\sigma_{t<0})\in U(X)$.
This is clear from the picture in $\kp^+_0(X)$ and is proved in \cite[Sect.\ 13]{BrK3} using
Proposition \ref{prop:boundary}. For a type $A$ boundary, one uses the square $T_A^2$, i.e.\
$T_A^2(\sigma_{t<0})\in U(X)$.

\begin{remark} Thus, more precisely Bridgeland shows
$$\Aut_0^{\rm o}(\Db(X))=\langle T_{\ko_C(k)},T_A^2\rangle\cap \Aut_0(\Db(X)),$$ where all $\PP^1\cong C\subset X$
and all $k\in \ZZ$ really occur. Which of the spherical bundles $A$ really occur is not known. In particular, it is not clear if every spherical bundle on $X$ is $\mu$-stable with respect to at least
one ample $\omega$. For $\Pic(X)\cong \ZZ$ every spherical bundle is $\mu$-stable, see \cite{Mu}. 

Building up ${\rm Stab}(X)^{\rm o}$ from $U(X)$ eventually allows Bridgeland to deduce
the existence of a  group homomorphisms
$$\pi_1(\kp^+_0(X))\to\langle T_{\ko_C(k)},T_A^2\rangle\subset\Aut(\Db(X)).$$
\end{remark}

Our discussion does not do justice to \cite{BrK3}, it can at most serve as an illustration. Many
of the intricacies have not been mentioned. E.g.\ why is $\pi({\rm Stab}(X)^{\rm o})$ exactly
$\kp^+_0(X)$? Why is $\pi:{\rm Stab}(X)^{\rm o}\to\kp_0^+(X)$ really a topological cover?
We also have not explained how to use the analysis of ${\rm Stab}(X)^{\rm o}$ to finish the proof of Corollary \ref{cor:constrstable} for non rational $\omega,B$.

Related questions of this type are addressed in \cite{Hart}. For example it is shown that
$T_A$ for any Gieseker stable spherical bundles $A$ preserves the distinguished
component. The special role of spherical objects in the study of ${\rm Stab}(X)^{\rm o}$ is further
discussed in \cite{HMZ}.
\subsection{Moduli space rephrasing}\label{sec:rephras}
Instead of $\kp^+_0(X)$ one can consider the classical
period domain $$D\subset\PP(N(X)\otimes\CC)$$
of all $x\in\PP(N(X)\otimes\CC)$ with $(x.x)=0$ and $(x.\bar x)>0$, which 
is an open set of the smooth quadric defined by $(x.x)=0$. Its two connected components
$D^\pm\subset D$ are interchanged by complex conjugation. Similarly to the
definition of $\kp^+_0(X)$ one sets $$D_0:=D\setminus\bigcup_{\delta\in\Delta}\delta^\perp$$
and  $D^\pm_0:= D^\pm\cap D_0$.

 The natural projections $\kp(X)\to D$ and
$\kp_0^+(X)\to D_0^+$ are ${\rm GL}\!{}^+(2,\RR)$-bundles and the Serre spectral sequence
yields an exact sequence
$$\ZZ\cong\pi_1({\rm GL}\!{}^+(2,\RR))\to\pi_1(\kp^+_0(X))\to\pi_1(D_0^+)\to1.$$
Conjecture \ref{conj:Brconj} is therefore equivalent to $$\pi_1(D_0^+)\cong\raisebox{0.5mm}
{$\Aut_0^{\rm o}(\Db(X))$}/\raisebox{-1mm}{$\ZZ[2]$}.$$ Note however that $\pi_1(\kp^+_0(X))$ and $\pi_1(D_0^+)$ are usually not finitely gene\-ra\-ted
which makes them slightly unpleasant to work with. But they can be related to the fundamental
group of a quasi-projective variety, which is of course finitely generated, as follows.

Period domains of the type $D_0$ as above are well studied in moduli theory of K3 surfaces.
Dividing out by the subgroup of the orthogonal group ${\rm O}(H^2(X,\ZZ))$ fixing the polarization would yield the moduli space of polarized K3 surfaces. In analogy, one considers here
the subgroup $$\Gamma:=\{g\in {\rm O}(N(X))~|~g_{N(X)^*/N(X)}={\rm id}\}$$ which coincides with
the group of orthogonal transformations of $N(X)$ that can be extended to an orthogonal transformation of  $H^*(X,\ZZ)$ acting trivially on the transcendental lattice. 
The lift of $\Gamma$ to $\Aut(\Db(X))$ is the group of symplectic autoequivalences
$$\Aut(\Db(X))_{\rm s}:=\{\Phi\in\Aut(\Db(X))~|~\Phi^H|_{T(X)}={\rm id}\}.$$

Restricting to $\Aut^{\rm o}(\Db(X))_{\rm s}=\Aut^{\rm o}(\Db(X))\cap\Aut(\Db(X))_{\rm s}$ yields
a group that acts naturally on the distinguished component ${\rm Stab}(X)^{\rm o}$.
Then $$\raisebox{-1mm}{$\Aut^{\rm o}(\Db(X))_{\rm s}$}\setminus\raisebox{0.5mm}{${\rm Stab}(X)^{\rm o}$}/\raisebox{-1mm}{$\widetilde{\rm GL}\!{}^+(2,\RR)$}\cong \raisebox{0.5mm}{$D_0$}/\raisebox{-1mm}{$\Gamma$}.$$ 

Thus, the (distinguished component of the) space of stability conditions ${\rm Stab}(X)$ leads naturally, by identifying stability conditions which only differ
by $\widetilde{\rm GL}\!{}^+(2,\RR)$ and $\Aut^{\rm o}(\Db(X))_{\rm s}$,
to the quasi-projective variety $D_0/\Gamma$. At this point one starts wondering whether there
is a more functorial approach towards stability condition that awaits to be unraveled. 

In any case, this point of view allows one to rephrase Bridgeland's origi\-nal conjecture as
a conjecture that only involves
finitely generated groups.

\begin{conjecture}
There exists a natural isomorphism
$$\raisebox{0.5mm}{$\Aut(\Db(X))_{\rm s}$}/\raisebox{-1mm}{$\ZZ[2]$}\cong\pi_1^{\rm st}([\raisebox{0.5mm}{$D_0$}/\raisebox{-1mm}{$\Gamma$}]).$$
\end{conjecture}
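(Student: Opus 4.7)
The plan is to reduce the proposed isomorphism to Bridgeland's original simple connectedness conjecture (Conjecture 5.2) via the $\widetilde{\rm GL}\!{}^+(2,\RR)$-bundle structure already invoked in the excerpt, and then to upgrade the resulting equality of sets to one of stacks. Morally, Conjecture 5.2 computes the fundamental group of a quasi-projective \emph{universal cover}, while the rephrasing here computes the fundamental group of a quotient \emph{stack} whose ``universal cover'' should be precisely ${\rm Stab}(X)^{\rm o}$ modulo the free $\widetilde{\rm GL}\!{}^+(2,\RR)$-action.

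First I would pass from ${\rm Stab}(X)^{\rm o}$ to $\pi_1(D_0^+)$. Assuming ${\rm Stab}(X)^{\rm o}$ is simply connected, Theorem 4.4 makes it the universal cover of $\kp_0^+(X)$ with deck group $\Aut_0^{\rm o}(\Db(X))$, so that $\pi_1(\kp_0^+(X))\cong\Aut_0^{\rm o}(\Db(X))$. Combining this with the Serre sequence for the ${\rm GL}\!{}^+(2,\RR)$-bundle $\kp_0^+(X)\to D_0^+$ recalled at the start of Section 5.4 yields
$$\pi_1(D_0^+)\cong \Aut_0^{\rm o}(\Db(X))/\ZZ[2],$$
the generator of $\ZZ=\pi_1({\rm GL}\!{}^+(2,\RR))$ being absorbed into the shift $[2]$.

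Next I would set up the stack quotient. Let $\Gamma^+\subset\Gamma$ be the index-two subgroup preserving the component $D_0^+$, so that $[D_0/\Gamma]\cong[D_0^+/\Gamma^+]$; the defining fibration $D_0^+\to[D_0^+/\Gamma^+]\to B\Gamma^+$ gives the extension
$$1\to\pi_1(D_0^+)\to\pi_1^{\rm st}([D_0/\Gamma])\to\Gamma^+\to1.$$
On the autoequivalence side, I would use the cohomological representation $\Aut(\Db(X))_{\rm s}\to\Gamma$, whose image lands in $\Gamma^+$ and is surjective onto it by a derived Torelli type statement, and whose kernel is $\Aut_0(\Db(X))_{\rm s}$. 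The second auxiliary conjecture I would assume, a strengthening of Bridgeland's conjecture, is that all of $\Aut(\Db(X))_{\rm s}$ preserves ${\rm Stab}(X)^{\rm o}$, so that $\Aut_0(\Db(X))_{\rm s}=\Aut_0^{\rm o}(\Db(X))$. Passing to the quotient by $\ZZ[2]$ then produces an extension
$$1\to \Aut_0^{\rm o}(\Db(X))/\ZZ[2]\to \Aut(\Db(X))_{\rm s}/\ZZ[2]\to\Gamma^+\to1,$$
structurally identical to the one above. The remaining task is to compare the two extensions canonically, for which I would use Bridgeland's construction of the map $\pi_1(\kp_0^+(X))\to\Aut(\Db(X))$ from the proof of Theorem 5.1 to exhibit matching lifts of generators, ensuring that the two extensions represent the same class in $H^2(\Gamma^+,\pi_1(D_0^+))$.

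The main obstacle is clear: the entire argument rests on Bridgeland's open simple connectedness Conjecture 5.2, not known for a single projective K3 surface. A second genuinely hard point is the strengthening that the full symplectic subgroup $\Aut(\Db(X))_{\rm s}$ preserves the distinguished component ${\rm Stab}(X)^{\rm o}$, something that \cite{HMS} establishes only at the level of $\kp_0^+(X)$. A subtler technical issue is to make the extension-class comparison in the last step truly natural rather than merely abstract; ideally one would construct the homomorphism $\pi_1^{\rm st}([D_0/\Gamma])\to\Aut(\Db(X))_{\rm s}/\ZZ[2]$ directly, as a stacky lift of the known $\pi_1(\kp_0^+(X))\to\Aut_0^{\rm o}(\Db(X))$, by carrying out Bridgeland's loop-around-$\delta^\perp$ construction equivariantly with respect to $\Gamma$.
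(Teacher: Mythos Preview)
The statement is a \emph{conjecture}, and the paper offers no proof of it; it is presented as a rephrasing of Bridgeland's Conjecture \ref{conj:Brconj} in terms of finitely generated groups. The surrounding discussion in Section \ref{sec:rephras} supplies exactly the ingredients you use: the Serre sequence for $\kp_0^+(X)\to D_0^+$ giving $\pi_1(D_0^+)\cong\Aut_0^{\rm o}(\Db(X))/\ZZ[2]$ conditional on Conjecture \ref{conj:Brconj}, and the exact sequence $1\to\pi_1(D_0)\to\pi_1^{\rm st}([D_0/\Gamma])\to\Gamma\to\ZZ/2\ZZ\to0$ for the stack quotient. So your plan is not a proof but a correct and more explicit unpacking of why the two conjectures are equivalent, and it follows the paper's own line of reasoning.

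You go beyond the paper in two respects. First, you flag the need for the strengthening that all of $\Aut(\Db(X))_{\rm s}$ preserves ${\rm Stab}(X)^{\rm o}$; the paper mentions this only in the remark after Conjecture \ref{conj:Brconj} and does not incorporate it into the formulation of the second conjecture, so strictly speaking the paper's rephrasing already presupposes it (note that the second conjecture drops the superscript ${\rm o}$). Second, you raise the naturality of the extension-class comparison, which the paper does not address at all. Both points are legitimate and sharpen the discussion, but neither you nor the paper can close the gap: the simple connectedness of ${\rm Stab}(X)^{\rm o}$ remains open, and without it there is no proof to be had.
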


While the difference between the stacky fundamental group
$\pi_1^{\rm st}([D/\Gamma])$ and the ordinary $\pi_1(D/\Gamma)$ is huge,
for the stack $[D_0/\Gamma]$ the two only differ by the usually very small subgroup of $\Gamma$
of elements with fixed points in $D_0$ which sometimes is even trivial. In any case, one always has an exact sequence $1\to\pi_1(D_0)\to\pi_1^{\rm st}([D_0/\Gamma])\to\Gamma\to\ZZ/2\ZZ\to0$.

\begin{remark}
There is another way of looking at this picture. The period domains $D_0$ and their quotients
$[D_0/\Gamma]$ are well studied spaces. Essentially, $D_0$ is the complement of the infinite
hyperplane arrangement $\bigcup \delta^\perp$. The universal cover of such spaces can rarely be
described explicitly and/or in a meaningful way (e.g.\ with a moduli theoretic interpretation). But the
mo\-duli space of stability conditions ${\rm Stab}(X)^{\rm o}/\widetilde{\rm GL}\!{}^+(2,\RR)$
provides such a  description provided it really is simply connected as predicted by
Bridgeland's conjecture. Once this is settled, one would ask whether
it is actually contractible which would make $D_0$ a $K(\pi,1)$-space. Questions of this type have
recently also been addressed by Allcock in \cite{All}.
\end{remark}

Geodesics in $D_0$ converging to cusps, which are in bijection with
Fourier--Mukai partners of $X$ (see \cite{SMa}),
have been studied in detail in \cite{Hartm}. They are shown
to be related to so-called linear degenerations in ${\rm Stab}(X)^{\rm o}$.

\section{Further results}
There was not enough time in the lectures nor is there enough space here to enter the description of $\Stab(\kd)$ for
other situations, only the case of curves and (K3) surfaces has been discussed in some detail.
In fact, there are not many other examples of the type $\kd=\Db(X)$ with $X$ smooth and projective
one really understands well. In particular, Calabi--Yau threefolds remain elusive.
But there are examples of non-compact $X$ and, closely related, more algebraic triangulated categories
$\kd$ for which the theory is quite well understood, see e.g.\
\cite{Brsurv,Wolf1} for more information.

\subsection{Non-compact cases}
i) The derived category $\Db(X)$ of a K3 surface $X$ is an example
of a \emph{K3 category}, i.e.\ a triangulated category for which the double shift $[2]$
is a Serre functor. As we have seen, a description of ${\rm Stab}(\Db(X))$ or
$\Aut(\Db(X))$ is non-trivial (and still not completely understood).
The case of local K3 surfaces is more accessible. More precisely,
for the minimal resolution $\pi:X\to\CC^2/G$ of a Kleinian singularity one can consider K3
categories $\kd\subset\hat\kd\subset\Db(X)$ of complexes supported on the exceptional
divisor (resp.\ with vanishing $R\pi_*$). The spaces $\Stab(\kd)$ and $\Stab(\hat\kd)$ 
are studied in detail in \cite{Ishi,Th} for  $A_n$-singularities and in \cite{Brav,BKl} in general.
The analogue of Bridgeland's conjecture, originally  formulated for
projective K3 surfaces, has been proved in the local situation for the category $\kd$.

ii) Triangulated categories with a Serre functor given by the triple shift $[3]$ are called
\emph{CY3-categories}.
The most prominent examples is $\Db(X)$ with $X$ a smooth projective Calabi--Yau threefold. More accessible examples are provided by local Calabi--Yau manifolds. For example, if $X$ is the total space of
$\omega_{\PP^2}$, then the bounded derived category  $\kd=\kd_0(X)$ of coherent sheaves on $X$
supported on the zero section of $X\to\PP^2$ is a CY3-category. The study of ${\rm Stab}(\kd_0(X))$
has been initiated in \cite{Brnc} and a complete connected component of ${\rm Stab}(\kd_0(X))$
has been described in great detail in \cite{BM}. In particular, it is shown to be simply connected, which
in the case of $\Db(X)$ of a compact K3 surface is still conjectural (see Conjecture \ref{conj:Brconj}).
Categories of this type, i.e.\ derived categories of coherent sheaves supported on the zero section
of the canonical bundle of a Fano surface, often allow for a more combinatorial approach via
quiver representations, see \cite{Brsurv} for a quick introduction.

\subsection{Compact cases} i) Generic non-projective K3 surfaces $X$ and generic twisted but projective
K3 surfaces $(X,\alpha)$ have been studied in \cite{HMSComp} (see \cite{Ok2} for related results).
As in these cases there are no 
spherical objects, the space of stability conditions is much easier to describe.
Stability conditions on $\Db(Y)$ of an Enriques surface are in \cite{MMS} related to
stability conditions on the covering K3 surface $X$. In fact, for generic $Y$ it is shown that
a connected component of ${\rm Stab}(Y)$ is isomorphic to the
component ${\rm Stab}(X)^{\rm o}$ discussed in the earlier sections.

ii) In \cite{BMT} the authors come close to constructing stability conditions on $\Db(X)$ for
projective Calabi--Yau threefolds or, in fact, arbitrary projective threefolds.  The
remaining problem is related to certain Chern class inequalities which have been
further studied in \cite{BBMT}.

iii) Stability conditions on $\Db(\PP^n)$ are more accessible due to the existence of full exceptional collections.
In particular the case of $\PP^1$ is completely understood, see \cite{Ma,Ok}. For $n>1$ see \cite{Maold}, where
one also finds results on Del Pezzo surfaces.



\end{document}